\numberwithin{equation}{section}
\newtheorem{prop}{Proposition}
\newtheorem{theo}[prop]{Theorem}
\newtheorem{lemm}[prop]{Lemma}
\newtheorem{coro}[prop]{Corollary}
\newtheorem{rema}[prop]{Remark}
\newtheorem{defi}[prop]{Definition}
\theoremstyle{definition}
\newtheorem*{pfthmsl}{Proof of Theorem~\ref{rigidity}}
\newtheorem*{ack}{Acknowledgment}
\theoremstyle{remark}
\newcommand{\p}{\partial}
\begin{document}
\title{The Weyl  problem in warped product space}

\author{Chunhe Li}
\address{School of Mathematical Sciences  \\ University of Electronic Science and Technology of China \\ Chengdu, China} \email{chli@fudan.edu.cn}
\author{Zhizhang Wang}
\address{School of Mathematical Sciences\\ Fudan University \\ Shanghai, China}
\email{zzwang@fudan.edu.cn}
\thanks{Research of the first author is supported partially by a NSFC Grant No.11571063, and the last author is supported  by a NSFC Grant No.11301087}
\begin{abstract}
In this paper, we discuss the Weyl problem in warped product space. We obtain the openness, non rigidity and some applications. These results together with the a priori estimates obtained by Lu imply some existence results. Meanwhile we reprove the infinitesimal rigidity in the space forms.
\end{abstract}

\maketitle
\section{introduction}
 The isometric embedding problem is one of the fundamental problems in differential geometry. Among them, Weyl problem is very important. It  is a milestone in the development of nonlinear elliptic partial differential equation, especially, Monge-Amp\'ere type.  In 1916, Weyl  proposed the following problem. Does every smooth metric on two dimensional sphere with positive Gauss curvature admit  a smooth isometric embedding in three dimensional Euclidean space.  Weyl \cite{W} suggested the method of continuity to solve his problem. He also  gave the openness part for the analytic case and  established an estimate on the mean curvature of the embedded strictly convex surfaces which, in fact, is  $C^2$ a priori estimate for the embedding. The analytic case was fully solved by Lewy \cite{L}. In 1953, Nirenberg, in his celebrated paper \cite{N1}, generalized  Weyl problem to the smooth case and exhibited a beautiful proof. Alexandrov and Pogorelov \cite{P1} used different approach to solve the problem independently. Pogorelov \cite{P2} also generalized Nirenberg's theorem to hyperbolic space $\mathbb{H}^{3}_{-\kappa}$  which is constant negative sectional curvature $-\kappa.$  In 1990s Weyl's estimate was generalized to degenerate case with nonnegative Gauss curvature by Guan-Li \cite{GL2}, Hong-Zuily \cite{HZ} and partially by J.A. Iaia \cite{I}. Recently, Chang-Xiao \cite{CX} and Lin-Wang \cite{LW} also consider the degenerate case for Pogorelov's theorem in hyperbolic space.
 More material about isometric embedding in Euclidean space can be found in \cite{HH}.

In general relativity, it is a difficult problem to define suitable quasi-local mass.  In 1993, using mean curvature, Brown and York \cite{BY} proposed the following definition:
Let $(\Omega, g)$ be some compact Riemannian $3$ manifold, and suppose its boundary $\p \Omega$ has positive Gauss curvature, they define the following quantity
\begin{eqnarray}
m_{BY}(\p \Omega)=\frac{1}{8\pi}\int_{\p\Omega}(H_0-H)d\sigma,
\end{eqnarray}
 where $H$ and $H_0$ are mean curvature of $\p\Omega$ in original Reimannian manifold and Euclidean space if it can be isometrically embedded in $\mathbb{R}^3$.
 By Nirenberg's isometric embedding theorem, we know that there exists such isometric embedding. In 2002, an important work by Shi and Tam \cite{ST1} has shown that the Brown-York mass is nonnegative.

In \cite{LY1,LY2}, Liu-Yau introduced Liu-Yau quasi-local mass
\begin{eqnarray}
m_{LY}(\p \Omega)=\frac{1}{8\pi}\int_{\p\Omega}(H_0-|H|)d\sigma,
\end{eqnarray}
where $|H|$ is the Lorentzian norm of the mean curvature vector. They also proved the nonnegativity of their mass.
In \cite{WY1,WY2,WY3}, Wang and Yau define some new quasilocal mass generalizing Brown-York mass.  They have used the Pogorelov's work on the isometric embedding of sphere into hyperbolic space. The nonnegativity of Wang-Yau mass is also obtained in \cite{WY1, ST2}.

The definitions of the Brown-York mass and the Wang-Yau mass suggest us that isometric embedding of the sphere into some model space always plays important role for quasi local mass problem. Hence, it seems that it is valuable to generalize the Weyl problem to some other $3$ dimensional ambient space which is not a  space form.
It may be helpful for the further discussion of quasi local mass.

As we know, the Weyl problem has not been studied in general Reimannian manifold. In the present paper, we try to study the Weyl problem in $3$ dimensional warped product space.

The warped product spaces is $\mathbb{R}^n$ with some nontrivial but rotation symmetric metrics. In fact, we can write the metric in polar coordinates
\begin{eqnarray}\label{metric}
 ds^2=\frac{1}{f^2(r)}dr^2+r^2dS^2_{n-1},
 \end{eqnarray}
where $r$ is the radius parameter and $dS_{n-1}^2$ is the standard metric of $n-1$ sphere. $f$ depends only on $r$ which is called warped function. It is well known that if we choose the warped function to be
\begin{eqnarray}\label{spaceform}
f(r)=\sqrt{1+\kappa r^2},
\end{eqnarray}
then the corresponding warped product space becomes space form with the sectional curvature $\kappa$.  If the warped function is in the form
\begin{eqnarray}\label{AdS}
f(r)=\sqrt{1-\frac{m}{r}+\kappa r^2},
\end{eqnarray}
these spaces are called Anti-de Sitter-Schwarzschild (or AdS for short) space. They are special interesting examples to
generalize quasi local mass.

Now we give the outline and main results in this paper.
The continuity method was used in the proof of Weyl problem.
As in \cite{LW}, using normalized  Ricci flow we can find the homotopy path to connect standard metric and the given metric $g$ on $\mathbb{S}^2.$
Hence we need to deal with closeness, openness, rigidity and convexity. Recently, the closeness has been obtained in \cite{GL1}. In the first part of this paper, we establish the openness for strictly convex surface in any $3$-dimensional warped product spaces.

\begin{theo}\label{openness}
Let $g$ be some smooth metric on sphere $\mathbb{S}^2$. Suppose $g$ can be isometrically embedded into the three dimensional warped product space as a closed strictly convex surface. Then for any $\alpha\in (0,1)$, there exists a positive $\epsilon$, depending only on $g$ and $\alpha$, such that, for any smooth metric
$\tilde{g}$ on $\mathbb{S}^2$ satisfying
$$\|g-\tilde{g}\|_{C^{2,\alpha}(\mathbb{S}^2)}<\epsilon,$$
 $\tilde{g}$ also can be isometrically embedded into the warped product space as another closed strictly convex surface.
\end{theo}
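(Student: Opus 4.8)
The plan is to produce the embedding of $\tilde g$ by an implicit function theorem argument based at the given embedding $X_{0}\colon\mathbb{S}^{2}\to N$ of $g$, where $N$ is the three-dimensional warped product space. Consider the map $\Phi$ sending an embedding $X$ near $X_{0}$ to its induced metric $X^{*}(ds^{2})$, so that $\Phi(X_{0})=g$ and we must solve $\Phi(\tilde X)=\tilde g$. A variation of $X_{0}$ decomposes along the surface as $\dot X=Y+\phi\,\nu$, with $Y$ tangent, $\nu$ the outward unit normal and $\phi\in C^{\infty}(\mathbb{S}^{2})$, and the Gauss and Weingarten formulas give
\begin{equation*}
D\Phi_{X_{0}}(\dot X)_{ij}=\nabla_{i}Y_{j}+\nabla_{j}Y_{i}-2\phi\,h_{ij},
\end{equation*}
where $h$ is the second fundamental form of $X_{0}$ and $\nabla$ the Levi-Civita connection of $g$. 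The tangential field $Y$ contributes only the Lie-derivative (``gauge'') directions that amount to reparametrizing $\mathbb{S}^{2}$, so the essential unknown is the normal displacement $\phi$.

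The heart of the proof is to show that $D\Phi_{X_{0}}$ is surjective onto the relevant Hölder space and admits a bounded right inverse. The displayed first-order system is not elliptic, so, following Nirenberg's treatment of the classical Weyl problem, one passes to the second fundamental form: under $\dot X=Y+\phi\nu$,
\begin{equation*}
\dot h_{ij}=\nabla_{i}\nabla_{j}\phi-\phi\,h_{ik}h_{j}{}^{k}+(\mathcal{L}_{Y}h)_{ij}+\phi\,\overline{R}_{i\nu\nu j},
\end{equation*}
so $\phi$ enters $\dot h$ at second order, and imposing the linearized Gauss and Codazzi equations the isometric embedding problem for a strictly convex surface reduces, as in \cite{N1}, to a single scalar equation for $\phi$ whose linearization at $X_{0}$ is a second-order operator of Monge--Amp\`ere type. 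Strict convexity $h>0$ makes this operator uniformly elliptic, hence Fredholm of index zero on the closed surface $\mathbb{S}^{2}$; its kernel is the finite-dimensional space of infinitesimal isometric deformations of $X_{0}$ in $N$, which always contains the restrictions of the ambient Killing fields — the precise description of this kernel being the rigidity question treated separately in this paper. A Fredholm-alternative argument, together with a Minkowski-type integral identity for closed surfaces in $N$, then shows that $D\Phi_{X_{0}}$ is surjective with a bounded right inverse, the finite-dimensional kernel being harmless for this purpose. The warped structure enters only through lower-order terms: the ambient curvature $\overline{R}$ in the formula above and in the Gauss equation is a function of the radial coordinate $r=r(X)$, so its first variation produces terms proportional to $\phi$ (and tangential terms absorbed into the $Y$-gauge) that alter neither the principal symbol nor the index, but that must be tracked carefully in the integral identity.

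Granting this, the implicit function theorem in Hölder spaces produces, for every $\tilde g$ with $\|g-\tilde g\|_{C^{2,\alpha}(\mathbb{S}^{2})}<\epsilon$, an embedding $\tilde X$ that is $C^{2,\alpha}$-close to $X_{0}$ with $\tilde X^{*}(ds^{2})=\tilde g$, where $\epsilon$ depends only on $g$ and $\alpha$ — through $X_{0}$, $h$, $h^{-1}$ and the norm of the right inverse; if necessary one first composes $\tilde X$ with a diffeomorphism of $\mathbb{S}^{2}$ near the identity to fix the reparametrization gauge. Strict convexity is an open condition, so $\tilde X(\mathbb{S}^{2})$ is again a closed strictly convex surface; and since $\tilde g$ is smooth, elliptic regularity applied to the reduced equation upgrades $\tilde X$ to a smooth embedding. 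The main obstacle is the second paragraph: carrying out Nirenberg's elliptic reduction together with the Fredholm analysis in the variable-curvature warped setting — in particular controlling the position-dependent ambient-curvature terms and verifying that the solvability condition is met — after which the remainder is a standard implicit-function-theorem argument.
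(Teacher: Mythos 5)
Your high-level skeleton (linearize, prove surjectivity of the linearization, conclude by a fixed-point / implicit-function argument) matches the paper, and the decomposition $\dot X = Y + \phi\,\nu$ is the same as the paper's $\omega = \tau\cdot d\vec r$, $\phi = \tau\cdot\nu$. However, the central step — proving that the linearized map has a bounded right inverse — is handled very differently, and your version of it has a genuine gap.

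You propose to follow Nirenberg's classical route: pass to the second fundamental form, reduce the linearized isometric embedding problem to a single second-order scalar equation for $\phi$, use strict convexity to get ellipticity, and then close the argument with a ``Fredholm-alternative argument together with a Minkowski-type integral identity.'' You also assert that the kernel is ``harmless for this purpose.'' But the reduced scalar operator on a closed surface has Fredholm index zero, so a nontrivial kernel forces a nontrivial cokernel of the same dimension, and then solvability fails for generic right-hand sides unless one verifies a compatibility (orthogonality) condition. In the warped product setting this is exactly where the difficulty lies: Section 7 of the paper shows that for a warped product that is not a space form, the kernel of the linearized operator is six-dimensional while the ambient isometry group has dimension strictly less than six — i.e., infinitesimal rigidity fails, and the kernel is strictly larger than the Killing directions. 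So the obstruction space is genuinely nontrivial. The ``Minkowski-type integral identity'' that would show the specific right-hand side produced by $\tilde g - g$ is automatically orthogonal to the cokernel is not supplied, and it is not at all clear it holds: this is precisely the step that distinguishes the warped case from Blaschke's Euclidean argument. You flag it yourself as ``the main obstacle,'' but without it the proof does not close.

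The paper avoids this obstacle entirely. After substituting $\phi = \tfrac12\mathrm{tr}_h(\nabla\omega - q)$, it obtains the \emph{first-order} operator $L_h(\omega) = L(\omega) - \tfrac12\mathrm{tr}_h(\nabla\omega)\,h$ acting on 1-forms, shows by a direct symbol computation that $L_h$ is strongly elliptic (contrary to your remark that the first-order system is not elliptic), and — rather than counting dimensions — proves directly that $\ker L_h^\ast = 0$ via a maximum-principle argument (Lemmas~\ref{le7} and~\ref{le8}, using the conformal Killing field and the rotational symmetry of the ambient space). This yields solvability of the linearized system for arbitrary right-hand side \emph{without} any infinitesimal-rigidity or integral-identity input; the nonlinear step is then a contraction-mapping argument (Section 4), with the second-order scalar equation used only to upgrade regularity, not to establish solvability. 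The paper explicitly advertises this as a departure from Nirenberg's proof. So: your route could conceivably be completed, but only by supplying the missing integral-identity argument, which the paper's own counterexamples to infinitesimal rigidity show is delicate; the paper sidesteps the issue altogether.
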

This theorem is proved by solving the linearized problem of isometric system. In fact, we have proved that the linearized
system always can be solved without so called infinitesimal rigidity which is different from the Nirenberg's original proof. We also note the recent work of A.J.C. Pacheco and P. Miao \cite{PM}. They have proved the openness near the standard sphere metric in Schwarzschild manifolds with small mass.  Their paper reminds us that Theorem \ref{openness} implies that openness holds for any strictly convex surface in conformal flat $3$ manifolds, since conformal flat manifolds are also warped product space.

Combining the openness with the closeness theorem, we can obtain some existence results. One of them is the following.
\begin{theo}\label{theorem2}
Suppose the three dimensional warped product space $M$ do not have singularity and the warped function $f$ satisfies the assumption (a) and (b). Then for any metric  $g$ on  $\mathbb{S}^2,$ if its Gauss curvature $K>K_0$ for some constant $K_0,$ there is some isometric embedding of $(\mathbb{S}^2,g)$  $\vec{r}$  in $M$, such that $\vec{r}$ is some strictly convex surface.
\end{theo}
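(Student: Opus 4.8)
The plan is to establish Theorem~\ref{theorem2} by the continuity method along the homotopy of metrics produced by the normalized Ricci flow on $\mathbb{S}^2$, in the spirit of \cite{LW}. Running the normalized Ricci flow with $g$ as initial datum and keeping the area $A=\mathrm{Area}(\mathbb{S}^2,g)$ fixed, Hamilton's theorem gives a solution $g(t)$, $t\in[0,\infty)$, converging in $C^\infty$ and exponentially fast to the round metric $g_\infty$ of area $A$, that is, of constant Gauss curvature $1/r_0^2$ with $r_0=(A/4\pi)^{1/2}$. Relabeling this family by $s\in[0,1]$ (for instance $t=-\log s$ for $s\in(0,1]$, together with $g_0:=g_\infty$) one gets a path $s\mapsto g_s$ which is continuous into $C^{2,\alpha}(\mathbb{S}^2)$ and satisfies $g_0=g_\infty$, $g_1=g$. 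Let $I$ be the set of those $s\in[0,1]$ for which $(\mathbb{S}^2,g_s)$ admits an isometric embedding into $M$ as a closed strictly convex surface; it is enough to prove that $I$ is nonempty, open and closed.

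For nonemptiness one observes that $g_\infty$ is exactly the metric induced on the coordinate sphere $\Sigma_{r_0}=\{r=r_0\}\subset M$: by \eqref{metric} this induced metric is $r_0^2\,dS^2_2$, the round metric of curvature $1/r_0^2$. Since $M$ has no singularity, $f>0$ and the coordinate spheres $\{r=\rho\}$ are smooth closed surfaces for every admissible $\rho>0$; a direct computation from \eqref{metric} shows $\Sigma_{r_0}$ is totally umbilic with both principal curvatures equal to $f(r_0)/r_0>0$. Hence $\Sigma_{r_0}$ is a closed strictly convex surface realizing $g_\infty$, so $0\in I$.

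Openness of $I$ is immediate from Theorem~\ref{openness}: if $s_\ast\in I$, apply that theorem to $g_{s_\ast}$ with a fixed $\alpha\in(0,1)$ to obtain $\epsilon>0$ such that every metric within $\epsilon$ of $g_{s_\ast}$ in $C^{2,\alpha}$ is isometrically embeddable as a closed strictly convex surface, and then use the $C^{2,\alpha}$-continuity of $s\mapsto g_s$. Closeness of $I$ is where one invokes the closeness theorem of Guan--Lu \cite{GL1} together with the a priori estimates of Lu: under the assumptions (a), (b) on $f$ and the curvature hypothesis $K>K_0$ (which, as discussed below, persists along the path), any isometric embedding of $(\mathbb{S}^2,g_s)$ remains in a fixed compact region of $M$ with uniform $C^{2,\alpha}$ bounds, while the Gauss equation $\det(\mathrm{II})=K_{g_s}-\bar K\ge K_0-\bar K\ge 0$, with $\bar K$ the ambient sectional curvature of the tangent plane, rules out degeneration, so strict convexity is inherited by limits. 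Thus if $s_j\in I$ and $s_j\to s_\infty$, the corresponding embeddings subconverge to an isometric strictly convex embedding of $(\mathbb{S}^2,g_{s_\infty})$, i.e.\ $s_\infty\in I$. Since $[0,1]$ is connected, $I=[0,1]$, and in particular $1\in I$, which is the statement.

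The step I expect to be the main obstacle is the one genuinely nonformal ingredient above: checking that the homotopy $g_s$ stays within the class handled by \cite{GL1}, i.e.\ that $K_{g_s}>K_0$ (and anything else that assumptions (a), (b) require) is preserved along the normalized Ricci flow. This should be obtained from the maximum principle applied to the evolution equation of the scalar curvature on $\mathbb{S}^2$, exactly as in \cite{LW}: on $\mathbb{S}^2$ the spatial average of the scalar curvature is the topological constant $8\pi/A$, and one compares $\min R(t)$ with the corresponding ODE. (If $K_0\le 0$ this is automatic, since positivity of the curvature is preserved along the flow on $\mathbb{S}^2$.) The accompanying technical point --- that the a priori bounds be uniform in $s$ along the whole path and not just for one metric --- is precisely the content of the closeness theorem \cite{GL1} and of Lu's estimates, which we are entitled to quote.
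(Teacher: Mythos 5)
Your proposal is correct and follows essentially the same strategy as the paper's: a continuity argument along the normalized Ricci flow homotopy, with nonemptiness from the geodesic (coordinate) sphere, openness from Theorem~\ref{openness}, and closedness from the Heinz-type curvature estimate of Lu (Theorem~\ref{Lu}) together with Guan--Lu. The only cosmetic difference is that the paper spells out the $C^0$ bound by integrating the deformation field $\tau$ with the Schauder estimate \eqref{4.6}, whereas you fold this into the quoted closeness theorem; conversely, you make explicit the (correct) maximum-principle observation that $K>K_0$ persists along the flow, which the paper leaves implicit.
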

 Here the assumption (a), (b) and the constant $K_0$ is defined by \eqref{assumption} in section $5$. The above existence theorem is proposed and proved by Guan-Lu \cite{GL1} and Lu \cite{Lu}.

In the second part, we discuss the infinitesimal rigidity for space form. It is well known that the infinitesimal rigidity for closed convex surface is obtained by Cohn-Vossen \cite{CV2}. Then it is simplified by Blaschke in \cite{B1} using Minkowski identities.  The infinitesimal rigidity in hyperbolic space is also discussed by Lin-Wang \cite{LW}. In this paper, we give an alternative proof.
\begin{theo}\label{rigidity}
Suppose $M$ is a convex compact surface in  three dimensional  space  form, then it's infinitesimally rigid. Namely, the solutions to the linearized problem come from the Lie algebra of the isometry group of the ambient space.
\end{theo}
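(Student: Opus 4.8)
The plan is to follow the Cohn--Vossen--Blaschke route via Minkowski-type integral identities, but to run it inside the flat model space in which the space form $N^3_\kappa$ sits, so that the argument is uniform in the curvature $\kappa$ and produces the kernel of the linearized operator directly.

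First I would fix a strictly convex embedding $\vec r\colon(\mathbb S^2,g)\hookrightarrow N^3_\kappa$ with unit normal $\nu$ and positive second fundamental form $h_{ij}$, and write down the linearized isometric embedding equation: an infinitesimal deformation is a section $\tau$ of $\vec r^{*}TN^3_\kappa$ with $\langle\bar\nabla_i\tau,\vec r_j\rangle+\langle\bar\nabla_j\tau,\vec r_i\rangle=0$, where $\bar\nabla$ is the ambient connection. Splitting $\tau=\tau^{\top}+\varphi\nu$ turns this into $\mathcal L_{\tau^{\top}}g+2\varphi\,h=0$ together with the integrability relations between $\tau^{\top}$ and $\varphi$ coming from $\bar\nabla_i\bar\nabla_j\tau-\bar\nabla_j\bar\nabla_i\tau=\bar R(\vec r_i,\vec r_j)\tau$. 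It is immediate that the restriction to $\Sigma$ of any Killing field of $N^3_\kappa$ solves this system, so the Lie algebra $\mathfrak g=\mathfrak{isom}(N^3_\kappa)$, of dimension $6$, is contained in the solution space; the theorem is the reverse inclusion, equivalently the bound $\dim(\text{solution space})\le 6$.

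Next, for $\kappa\neq0$ I would realize $N^3_\kappa$ as the standard quadric (a round sphere or a hyperboloid) in the flat $4$-space $E=\mathbb R^4$ or $E=\mathbb R^{3,1}$, so that $\vec r,\vec r_i,\nu$ become $E$-valued and $\bar\nabla$ is the flat connection of $E$ corrected by the umbilic term forced by the constraint $\langle\vec r,\vec r\rangle=\kappa^{-1}$; for $\kappa=0$ one works directly in $\mathbb R^3$, which is the classical setting of Cohn--Vossen and Blaschke. The symmetrized equation for $\tau$ then produces, just as in the Euclidean case, a global $E$-valued ``rotation field'' $\Omega$ — i.e.\ a skew endomorphism of $E$, hence a candidate element of $\mathfrak{so}(E)=\mathfrak g$ — characterized on $T\Sigma$ by $\bar\nabla_i\tau=\Omega\cdot\vec r_i$; its single-valuedness and the key structural fact that $\bar\nabla\Omega$ is a \emph{symmetric} $T\Sigma$-valued form, whose ``$h$-conjugate part'' is the only obstruction to $\Omega$ being constant, follow from the Gauss and Codazzi equations of $\Sigma$ (equivalently of $\Sigma$ in $E$ with its two second fundamental forms $h$ and $g$). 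Then I would run the Blaschke--Minkowski argument: integrate over $\Sigma$ an appropriate pairing of $\tau$, $\Omega$ and $\bar\nabla\Omega$, integrate by parts — there is no boundary term since $\Sigma=\mathbb S^2$ is closed — and observe that the resulting integrand is, by strict convexity, a definite quadratic form in the nontrivial part of $\bar\nabla\Omega$. Hence that part vanishes, $\Omega$ is a constant element of $\mathfrak{so}(E)=\mathfrak g$, and $\tau$ coincides with the associated Killing field up to a further constant vector, which is again in $\mathfrak g$ (or is absorbed by the constraint). This yields the dimension bound and completes the proof.

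I expect the main obstacle to be the curved-ambient bookkeeping: constructing the field $\Omega$ globally and proving the symmetry/divergence identity for $\bar\nabla\Omega$ require the full Gauss--Codazzi system of $\Sigma$ in $N^3_\kappa$, and one must verify that the quadratic form in the integral identity has the correct sign even when $E=\mathbb R^{3,1}$ is indefinite. This works out because that form is governed by the positive definite second fundamental form $h_{ij}$ of $\Sigma$, not by the (indefinite) ambient metric, but it has to be tracked carefully and uniformly across $\kappa>0$, $\kappa=0$ and $\kappa<0$. A secondary point is to match the explicit $6$-dimensional family of solutions coming from $\mathfrak g$ against the dimension bound, so as to conclude equality rather than mere inclusion.
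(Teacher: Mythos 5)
Your outline is essentially the classical Cohn--Vossen/Blaschke route, carried out in the flat model $E=\mathbb{R}^4$ or $\mathbb{R}^{3,1}$: build a skew ``rotation field'' $\Omega$ along $\Sigma$, show via Gauss--Codazzi that the obstruction to $\Omega$ being constant is an $h$-traceless Codazzi tensor, and kill it with a Minkowski-type integral identity. That is a viable and genuinely different route from the one in the paper. The paper never uses an integral identity: in the Euclidean case it writes $d\tau=Y\times d\vec r$ as in \eqref{suppose}, encodes $dY$ in a symmetric, $h$-traceless tensor $a_{ij}$, and then --- using the algebraic inequality of Lemma 4 in \cite{GWZ} --- turns the relation $h^{ij}Y_{i,j}=-h^{ij}_{,2}a_{ij}\,\vec r_1+h^{ij}_{,1}a_{ij}\,\vec r_2$ into a second-order elliptic equation for $Y\cdot E$, so that the \emph{strong maximum principle} alone forces $Y$ to be constant (Proposition \ref{15}); for $\kappa\neq 0$ it does not redo any argument in the curved setting at all, but uses the Beltrami (central projection) map to show that the linearized system transforms into the Euclidean one, $d(\bar r/x_0)\ast d(A/x_0)=0$ as in \eqref{Es}, and that convexity is preserved because the two second fundamental forms are conformal with a nonvanishing factor. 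What your approach buys is uniformity in $\kappa$ and a direct identification of the solution as a constant element of $\mathfrak{so}(E)$; what the paper's approach buys is that it avoids both the global integral identity and the curved-ambient bookkeeping, and the same Beltrami trick immediately transfers solvability of the inhomogeneous linearized system from $\mathbb{R}^3$ to the space forms (see the remark at the end of Section 6). Two points in your sketch need more care than you give them. First, on $T\Sigma$ the condition $\bar\nabla_i\tau=\Omega\cdot\vec r_i$ alone leaves a one-parameter pointwise ambiguity in $\Omega\in\mathfrak{so}(E)$; you should also impose $\Omega\,\vec r=\tau$ (consistent because $\langle\tau,\vec r\rangle=0$ on the quadric), which pins $\Omega$ down uniquely and smoothly. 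Second, the integrand in the Blaschke identity is not a definite quadratic form: it is (a support-function-type weight) times the determinant of the $h$-traceless tensor, so the sign comes from the trace constraint against $h>0$ together with positivity of the weight --- which requires choosing the origin inside the body in the Euclidean case and containment in an open hemisphere in the spherical case (the paper gets the latter from Bonnet--Myers), and in the hyperbolic case must be checked against the indefinite metric of $\mathbb{R}^{3,1}$. These are standard but they are exactly where the work lies if you pursue your route.
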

In the Euclidean space, we give two proofs only using maximum principle. The idea comes from \cite{GWZ} and \cite{HNY}. For hyperbolic and spherical space cases, we use Beltrami map to extend the infinitesimal rigidity in Euclidean space to space forms.
It is different from the original proof. The Beltrami map also can be used to extend the openness from Euclidean space to space form.

The third part discusses the rigidity for isometric problem in warped product space.
The global rigidity is also obtained by Cohn-Vossen for convex surfaces \cite{CV1}. The space form cases are still valid \cite{D, GS}.
Different from the space form case, in general warped product spaces except space form  the rigidity is always not true.
For any strictly convex surface, since the isometry group will  become smaller, we always can prove the non infinitesimal rigidity. Combing this fact with the idea in the proof of the openness,
we can obtain global non rigidity for any strictly convex surface. But in fact, we can have some a little weak non rigidity for geodesic sphere in any dimension.
Explicitly, we have proved that,
\begin{theo}\label{NR}
In $n$ dimensional warped product space, the $r$ radius geodesic sphere is not rigid if the function $$\frac{ff'}{r}+\frac{1-f^2}{r^2}$$ is non zero at $r$. It means that there exists some smooth perturbation convex body isometric to the $r$ sphere but their second fundamental forms are different. If every $r$ sphere is rigid, then the ambient space should be space form.
\end{theo}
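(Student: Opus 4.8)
The plan is to write down an explicit infinitesimal isometric deformation of the geodesic sphere $\Sigma_0=\{r=r_0\}$ that changes its mean curvature, and then to integrate it using the linearised solvability that already underlies Theorem~\ref{openness}. First I would record the geometry of $\Sigma_0$: with respect to the outward unit normal $N=f(r)\p_r$ it is totally umbilic, with second fundamental form $h_{ij}=\frac{f(r_0)}{r_0}g_{ij}$ relative to the induced metric $g=r_0^2\,dS^2_{n-1}$, so $|h|^2=(n-1)\frac{f^2}{r_0^2}$ and the mean curvature $H=\mathrm{tr}_g h=(n-1)\frac{f}{r_0}$ is a nonzero constant (here and below $f,f'$ are evaluated at $r_0$). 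Introducing the radial arclength $s$ with $ds=dr/f$ writes the ambient metric as the standard warped product $ds^2+\rho(s)^2\,dS^2_{n-1}$ with $\rho=r$, $\rho'=f$, so the radial Ricci curvature is $\mathrm{Ric}(N,N)=-(n-1)\rho''/\rho=-(n-1)\frac{ff'}{r_0}$.

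Next I would construct the deformation. For a variation of $\Sigma_0$ with normal component $\varphi$ and tangential component $X$, the induced metric changes by $\delta g_{ij}=2\varphi h_{ij}+\nabla_i X_j+\nabla_j X_i$, so the variation is infinitesimally isometric precisely when $\nabla_i X_j+\nabla_j X_i=-\frac{2f}{r_0}\varphi\,g_{ij}$; that is, $X$ must be a conformal Killing field of the round metric $g$ whose conformal factor is prescribed by $\varphi$. Taking $\varphi$ to be a first eigenfunction of $\Delta_g$ on $(\mathbb S^{n-1},g)$ --- so $-\Delta_g\varphi=\frac{n-1}{r_0^2}\varphi$ and, by Obata's identity, $\nabla^2\varphi=-\frac{1}{r_0^2}\varphi\,g$ --- one checks that $X:=f(r_0)\,r_0\,\nabla\varphi$ solves the displayed equation, so $(\varphi,X)$ is an infinitesimal isometric deformation (and there are $n$ independent ones, one per first eigenfunction). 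Its effect on the mean curvature is governed by the Jacobi-type formula $\delta H=-\Delta_g\varphi-(|h|^2+\mathrm{Ric}(N,N))\varphi+X(H)$; the last term vanishes since $H$ is constant, and substituting the quantities above gives
\[
\delta H=(n-1)\Big(\frac{ff'}{r_0}+\frac{1-f^2}{r_0^2}\Big)\varphi .
\]
Under the hypothesis this is a nonzero multiple of the nonconstant function $\varphi$; on the other hand, since mean curvature is an isometric invariant and is constant on $\Sigma_0$, every infinitesimal ambient isometry restricts to $\Sigma_0$ with $\delta H\equiv0$. Hence $(\varphi,X)$ is not the restriction of any ambient Killing field, which already gives the infinitesimal non-rigidity of $\Sigma_0$.

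Finally I would upgrade this to a genuine nearby hypersurface by the implicit function theorem applied to the map $\vec r\mapsto\vec r^{\,*}(ds^2)$ from embeddings near the inclusion $\iota_{r_0}$ to metrics near $r_0^2\,dS^2_{n-1}$: the proof of Theorem~\ref{openness} supplies a right inverse for the linearisation of this map, so, after fixing a reparametrisation gauge, the set of embeddings with induced metric $r_0^2\,dS^2_{n-1}$ is near $\iota_{r_0}$ a finite-dimensional manifold whose tangent directions include the nontrivial deformation $(\varphi,X)$. Integrating along it produces a family $\Sigma_t$ of embedded hypersurfaces isometric to $\Sigma_0$, strictly convex for small $t$ since convexity is open, with $H_{\Sigma_t}=H_{\Sigma_0}+t\,\delta H+O(t^2)$ nonconstant for small $t\neq0$; thus $\Sigma_t$ is not congruent to $\Sigma_0$ (whose mean curvature is constant), and its second fundamental form genuinely differs, which is the asserted non-rigidity. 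The last assertion is then immediate: if every geodesic sphere is rigid then $\frac{ff'}{r}+\frac{1-f^2}{r^2}\equiv0$, and integrating this first-order ODE forces $f(r)=\sqrt{1+\kappa r^2}$ for some constant $\kappa$, so $M$ is the space form of \eqref{spaceform}. I expect the integration step to be the main obstacle --- it is precisely where the linearised solvability behind Theorem~\ref{openness} is needed, and it is also the reason the non-rigidity is obtained only in the local form stated above.
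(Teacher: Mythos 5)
Your infinitesimal computation is correct and matches the paper's first-order expansion: the deformation you build from a first eigenfunction $\varphi$ (equivalently, the normal/tangential decomposition of the Euclidean translation field $\p/\p z^n$, whose normal component is $\epsilon\sin u_1/f$) changes the mean curvature at first order by $(n-1)\bigl(\frac{ff'}{r}+\frac{1-f^2}{r^2}\bigr)\varphi$, which is exactly the coefficient the paper finds in its expansion of $h_i^i$, and your ODE argument for the final assertion ($f^2=1+\kappa r^2$) is the same as the paper's. The problem is the integration step, precisely the one you flag as the main obstacle. You propose to integrate via a right inverse of the linearised isometric-embedding operator supplied by the proof of Theorem~\ref{openness}. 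That solvability (Theorem~\ref{LE}) is established only for two-dimensional surfaces in three-dimensional warped products: the proof is genuinely two-dimensional (cross products, $H^1_{DR}(\mathbb{S}^2)=0$, the cofactor/maximum-principle trick), and for a hypersurface in dimension $n\geq 4$ the linearised system has $n(n-1)/2$ equations for only $n$ unknown components of $\tau$, so it is overdetermined and has no right inverse in general. The paper says this explicitly in the remark of Section~8: ``for high dimension case, the equation \eqref{8.1} cannot always be solved.'' Since Theorem~\ref{NR} is an all-dimensions statement, your scheme as written only proves it for $n=3$, where it essentially reproduces the paper's Section~7 non-rigidity argument (nontrivial kernel element plus a contraction mapping for the quadratic correction).

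What the paper does instead, and what your proposal is missing, is a symmetry reduction that bypasses general linearised solvability: it takes the ansatz $\vec y=(\epsilon+\epsilon^2 h(\sin u_1))\,\p/\p z^n$, observes that for this rotationally symmetric perturbation of a slice sphere the full quadratic error tensor $q(\vec r,\vec y)$ has only the $q_{11}$ component, so the exact isometry condition reduces to the single ODE $2\cos u_1\,\tilde h'=-W\cos^2 u_1$, i.e.\ $\tilde h(t)=-\frac12\int_0^t W$, and then runs a contraction mapping for the one-variable function $h$ in $C^k([-1,1])$ (followed by an elliptic-regularity bootstrap through the Gauss equation to get smoothness, a point your sketch also leaves open). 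If you want to keep your structure, you must replace the implicit-function-theorem step by such an explicit construction (or restrict the claim to $n=3$); your choice of deformation direction is already the right one, since the first-eigenfunction deformation is exactly the linearisation of the paper's translation ansatz.
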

Since the scalar curvature is only determined by metric, the Alexandrov's type theorem for constant scalar curvature in general warped product space is also not right.

The reason why these examples exist is that the kernel of the linearized problem is six dimensional, but for the ambient space, the isometry group is of less than six dimension in general. See section $7$ for more explanation. It suggested us that we need to restrict its geometric center to remove the "moving" directions. Hence we introduce the following condition: \\

\noindent {\bf Condition:}
Suppose $\Sigma$ is a hypersurface and $\vec{r}$ is its position vector. We require
\begin{eqnarray}\label{Cond}
\int_{\Sigma}\vec{r}d\sigma=0,
\end{eqnarray}
where $d\sigma$ is the volume element of $\Sigma$. 

Now we can recover the rigidity for sphere under the above condition.
\begin{theo}\label{RC}
Suppose manifold $M$ is a $n-1$ dimensional topological sphere and $g$ is some Einstein metric with positive constant scalar curvature on $M$. If $(M,g)$ can be isometrically embedded into $n$ dimensional warped product space and the embedded hypersurface $\Sigma$ is  $\sigma_2$-convex, star shaped and satisfying condition \eqref{Cond}, then $\Sigma$ is a slice sphere.
\end{theo}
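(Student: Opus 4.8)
The plan is to translate the Einstein hypothesis into a fully nonlinear Weingarten-type equation on a radial graph, to run a continuity argument deforming the warped function to the Euclidean one, and to use the centering condition~\eqref{Cond} precisely to kill the infinitesimal ``translation'' directions responsible for the non-rigidity of Theorem~\ref{NR}. Since $\Sigma$ is star shaped, write it as a radial graph $r=\rho(\xi)$ over $\mathbb{S}^{n-1}$. The curvature tensor of a warped product is built from the two radial functions occurring as the sectional curvatures of the $\partial r$-planes and of the spherical planes, so the Gauss equation expresses $\mathrm{Ric}^{\Sigma}$ as $Hh-h^{2}$ plus an ambient term of the shape $\phi(r,\langle\partial r,\nu\rangle)\,g+\psi(r)\,\theta\otimes\theta$, where $\theta$ is the one-form dual to the tangential part of $\partial r$ (so $\theta\equiv0$ exactly on slices) and $\phi,\psi$ are explicit. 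Imposing $\mathrm{Ric}^{\Sigma}=\frac{R^{\Sigma}}{n-1}g$ with $R^{\Sigma}$ a positive constant and tracing yields a constant-$\sigma_{2}$-type equation $2\sigma_{2}(h)=R^{\Sigma}-(n-1)\phi-\psi|\theta|^{2}$ for $\rho$, supplemented by the trace-free part of the Einstein equation (an extra relation automatically satisfied on slices). A slice $\{r=r_{0}\}$ solves this system, is umbilic, has round induced metric, and satisfies~\eqref{Cond}; the assertion is its uniqueness in the class of star-shaped, $\sigma_{2}$-convex hypersurfaces subject to~\eqref{Cond} (in particular $g$ must then be round).

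For the rigidity, homotope the warped function from $f$ to $f\equiv1$. At the Euclidean end a closed, star-shaped, $\sigma_{2}$-convex Einstein hypersurface of $\mathbb{R}^{n}$ is a round sphere by the classical classification of Einstein hypersurfaces of Euclidean space (closedness excludes the flat cylinders), and~\eqref{Cond} pins it to the origin-centered sphere, i.e.\ a slice. Openness of the solution set along the homotopy reduces, as in the proof of Theorem~\ref{openness}, to invertibility of the linearization: linearizing the constant-$\sigma_{2}$ equation at a slice produces a second-order elliptic operator on $\mathbb{S}^{n-1}$ of Jacobi type whose kernel is the span of the linear coordinate functions $x^{1}|_{\mathbb{S}^{n-1}},\dots,x^{n}|_{\mathbb{S}^{n-1}}$, the first nonzero eigenspace of $\Delta_{\mathbb{S}^{n-1}}$ --- these are exactly the ``moving'' directions of Theorem~\ref{NR}. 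A short computation shows that $\int_{\Sigma}\vec{r}\,d\sigma=0$ forces the graph deformation to be $L^{2}$-orthogonal to this kernel, so the linearized operator becomes invertible on the space constrained by~\eqref{Cond} (the genuine ambient rotations are harmless, since they preserve slices, and are quotiented out). Combining this with the a priori estimates of Guan--Lu~\cite{GL1} and Lu~\cite{Lu} and the closeness result of~\cite{GL1}, the solution attached to the Euclidean slice persists and remains a slice along the whole homotopy; hence any solution of the original problem, being connected to it, is a slice.

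The main obstacle is this globalization step: establishing uniform $C^{2}$ and $\sigma_{2}$-cone estimates along the homotopy while recentering at each stage --- by composing with an ambient rotation and a global translation --- so that~\eqref{Cond} is preserved and the openness/closeness dichotomy applies on a connected parameter interval. A separate subtlety is the degenerate slice on which $\psi(r_{0})=\left(\frac{ff'}{r}+\frac{1-f^{2}}{r^{2}}\right)\big|_{r_{0}}=0$, where the trace-free Gauss equation no longer detects $\theta$; there the vanishing of $\psi$ forces the ambient metric to agree to second order with a space form along that slice, and one concludes using the classical space-form rigidity (Theorem~\ref{rigidity} and its global counterpart). Alternatively one may avoid the homotopy and argue directly from the reduction, using the first and second Minkowski identities for the conformal Killing field $V=rf(r)\partial r$ and the Newton transformation $T_{1}=\sigma_{1}I-h$, the $\sigma_{2}$-Heintze--Karcher inequality (available since $\Sigma$ is $\sigma_{2}$-convex), and the Newton--MacLaurin inequalities, to force $\Sigma$ umbilic and then, via the trace-free Gauss equation and~\eqref{Cond}, a slice; the delicate point there is carrying the ambient-curvature terms through the two Minkowski identities so that the chain of inequalities closes.
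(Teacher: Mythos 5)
Your scheme has genuine gaps and would not close as written. The central one is logical: a continuity/homotopy argument of the type you describe can show that a branch of solutions emanating from the Euclidean round sphere consists of slices, but it does not show that an \emph{arbitrary} star-shaped, $\sigma_2$-convex hypersurface with Einstein induced metric satisfying \eqref{Cond} lies on that branch. To conclude that, you would have to continue the \emph{given} hypersurface backwards along the deformation $f_t$ from $f$ to $1$, which requires (i) uniform a priori estimates along the whole path for this $n$-dimensional, overdetermined problem --- the estimates you invoke from \cite{GL1}, \cite{Lu} are curvature estimates for the two-dimensional Weyl problem and give nothing here --- and (ii) invertibility of the constrained linearization at \emph{every} hypersurface along the continued path, not only at slices, so that local uniqueness propagates; you verify (sketchily) only the slice case. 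The recentering needed to preserve \eqref{Cond} along the path, which you flag as ``the main obstacle,'' is not a technical afterthought: in a general warped product translations are not isometries, so recentering changes the induced metric and the equation, and without controlling this the openness/closedness dichotomy never applies to the solution you actually want to classify. Your alternative route is also blocked: Heintze--Karcher and Minkowski-identity arguments in warped products require structural sign conditions on $f$ (as in Brendle's work) that are not assumed, and the paper's own Theorem~\ref{NR} and the ensuing Ros-type counterexample show that constant scalar curvature ($\sigma_2$-type) hypersurfaces in general warped products need \emph{not} be slices, so no chain of inequalities that ignores \eqref{Cond} quantitatively, or that only uses the traced equation, can succeed.

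That last point exposes the second gap: by passing to the traced, constant-$\sigma_2$-type equation you discard the Einstein condition except for its trace, but the full Einstein condition is exactly what makes the theorem true. The paper's proof uses it twice, in a short direct argument with no continuity method at all: by Lichnerowicz, $R_{ij}=(n-2)g_{ij}$ gives $\lambda_1\geq n-1$, and \eqref{Cond} says the Euclidean coordinate functions $\vec{r}\ast E$ have zero mean, so the Poincar\'e inequality yields an upper bound for $\int_\Sigma 2\rho\,d\sigma$ in terms of $\int_\Sigma|\nabla\rho|^2$; separately, integrating the Darboux (Gauss) equation $\sigma_2(w)=f^2\varphi^2\bigl(\tfrac{n-1}{2}+\tfrac{1}{n-2}\sum_{i<j}\bar{R}_{ijij}\bigr)$ by parts, the divergence of the Newton tensor produces the contraction $\sum\rho_i\rho_c R_{ijjc}$, which the Einstein condition evaluates as $(n-2)|\nabla\rho|^2$, giving an exact identity for $\int_\Sigma 2\rho\,d\sigma$. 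Subtracting the two, every remaining term carries the factor $|\nabla\rho|^2$ with a coefficient that is positive once one shows $\phi(\rho)=2\rho+f^2-1>0$, which follows from $\sigma_2$-convexity by a first-zero argument; hence $\nabla\rho\equiv 0$ and $\Sigma$ is a slice. Your proposal never touches either of these uses of the Einstein hypothesis, and without them neither of your two routes has a mechanism to rule out the non-slice competitors that Theorem~\ref{NR} shows really exist nearby.
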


In the last part, we discuss some application for our theorem. We will prove some inequality similar to Shi-Tam in asymptotic flat manifolds, namely, the warped function defined by \eqref{AdS} with $\kappa=0$.
\begin{theo}\label{ST}
Suppose $\Omega$ is a mean convex and simply connected domain in some $3$ dimensional Riemannian manifold. $\Sigma_0$ is its boundary. We also assume that the induced metric on $\Sigma_0$ is in the neighborhood of the canonical sphere metric with radius $r>m$. Then $\Sigma_0$ can be isometrically embedded in Schwarzschild manifold with mass $m$ and we have the following inequality
\begin{eqnarray}\label{LM}
\int_{\Sigma_0}(H_0-H)f(\vec{r})d\sigma_0+\frac{m}{2}\geq 0.
\end{eqnarray}
Here $H_0,H$ are the mean curvature of $\Sigma_0$ in Schwarzschild manifold and original Riemannian manifold. $\vec{r}$ is the position vector of the isometric embedding. $f$ is the warped function.
The equality will hold if the $\Sigma_0$ is an asymptotic Euclidean ball.
\end{theo}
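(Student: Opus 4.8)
The plan is to carry the quasi‑spherical extension argument of Shi--Tam over to the Schwarzschild background, the new ingredient being that Theorem~\ref{openness} supplies the required isometric embedding of $\Sigma_0$. Recall the standing assumptions of the Shi--Tam circle of ideas: the ambient manifold has nonnegative scalar curvature; mean convexity of $\Omega$ gives $H>0$; and by hypothesis $\Sigma_0$ is a topological sphere carrying a metric $C^{2,\alpha}$‑close to $r^2 g_{\mathbb{S}^2}$ with $r>m$. Now $r^2 g_{\mathbb{S}^2}$ is exactly the induced metric of the slice sphere $\{\,|\vec r|=r\,\}$ in the Schwarzschild manifold of mass $m$, and that slice sphere is strictly convex, with both principal curvatures equal to $\frac{\sqrt{1-m/r}}{r}>0$. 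Hence Theorem~\ref{openness} gives an isometric embedding of $\Sigma_0$ into the Schwarzschild manifold as a closed strictly convex surface lying outside the horizon $\{|\vec r|=m\}$, which by the same closeness is a small graph over a coordinate sphere. Let $H_0$ denote its mean curvature there.

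Next comes the exterior extension. Let $E$ be the unbounded component of the complement of $\Sigma_0$ in the Schwarzschild manifold; since $\Sigma_0$ is strictly convex and near a slice sphere, $E$ admits a smooth convex foliation $\{\Sigma_s\}_{s\ge 0}$ with inner leaf $\Sigma_0$ that converges to the coordinate‑sphere foliation at infinity, along which $g_{\mathrm{Sch}}=v^2\,ds^2+g_s$. Following Bartnik and Shi--Tam, one seeks a metric $\bar g=u^2\,ds^2+g_s$ on $E$ with the \emph{same} leaves and the same (vanishing) scalar curvature as $g_{\mathrm{Sch}}$; this reduces to a first‑order parabolic equation for $u$ along the flow, parabolic precisely because the $\Sigma_s$ are convex and so have positive mean curvature. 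One chooses the initial value $u|_{\Sigma_0}$ so that the mean curvature of $\Sigma_0$ as the inner boundary of $(E,\bar g)$ equals the ambient value $H$ (possible since $H,H_0>0$), and then, using the near‑round hypothesis to trap $u$ between two Schwarzschild radial profiles, one obtains long‑time existence of the flow and the fact that $(E,\bar g)$ is asymptotically Schwarzschild with a mass parameter $\hat m$.

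The conclusion then follows from a monotone quantity and the positive mass theorem. Along the foliation set
\begin{equation*}
Q(s)=\int_{\Sigma_s}\bigl(H_{\mathrm{Sch}}-H_{\bar g}\bigr)f(\vec r)\,d\sigma_s ,
\end{equation*}
where $H_{\mathrm{Sch}}$ and $H_{\bar g}$ are the mean curvatures of $\Sigma_s$ with respect to $g_{\mathrm{Sch}}$ and to $\bar g$. Since both metrics are scalar flat and Schwarzschild is static with potential $f$ --- it is the static, not merely the warped, structure that is used here --- the Gauss and Codazzi equations give $Q'(s)\le 0$; moreover $Q(0)=\int_{\Sigma_0}(H_0-H)f\,d\sigma_0$, while a direct computation on the coordinate spheres at infinity shows, in the normalization of \eqref{LM}, that $\lim_{s\to\infty}Q(s)=\tfrac12(\hat m-m)$. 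Finally, glue $(\Omega,g)$ to $(E,\bar g)$ along $\Sigma_0$: the induced metrics agree by construction and the mean curvatures agree across $\Sigma_0$, so the glued manifold is asymptotically flat with mass $\hat m$ and has nonnegative scalar curvature in the distributional sense of Miao; the positive mass theorem for manifolds with corners then gives $\hat m\ge 0$. Chaining these, the left side of \eqref{LM} equals $Q(0)+\tfrac m2\ge\lim_{s\to\infty}Q(s)+\tfrac m2=\tfrac{\hat m}{2}\ge 0$, and equality forces $\hat m=0$ together with $Q$ constant, which by the rigidity case of the positive mass theorem pins $\Omega$ down to the model configuration, an asymptotically Euclidean ball.

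The hard part is the exterior step: solving the quasi‑spherical equation over the Schwarzschild \emph{exterior} rather than over Euclidean space, and proving long‑time existence with sharp asymptotics. This is precisely where the hypothesis that the metric on $\Sigma_0$ is close to a round sphere of radius $r>m$ does the real work, keeping both the convex foliation and the solution $u$ trapped near the Schwarzschild reference; a secondary delicate point is verifying that inserting the static potential $f$ into $Q$ reproduces exactly the constant $\tfrac m2$ appearing in \eqref{LM} rather than some other multiple of $m$.
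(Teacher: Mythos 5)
Your proposal follows essentially the same route as the paper's proof: embed $\Sigma_0$ into the Schwarzschild manifold via the openness theorem, run a Bartnik/Shi--Tam quasi-spherical extension $u^2\,dt^2+g_t$ over a convex exterior foliation with scalar curvature matched to Schwarzschild and mean curvature matched to $H$ at $\Sigma_0$, prove monotonicity of the $f$-weighted Brown--York type quantity using the static equation for $f$, identify its limit with the ADM mass of the deformed metric, and finish with the positive mass theorem for metrics with a Lipschitz corner. The only real difference is cosmetic: the paper takes the foliation to be the normal geodesic flow off $\Sigma_0$ and proves convexity preservation along it (Lemma~\ref{cp}), which is precisely the step you correctly flag as where the hypothesis that the metric is near the round sphere of radius $r>m$ does the work.
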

We also calculate the example constructed in Theorem \ref{NR}, and it seems that \eqref{LM} holds even if $m$ is droped in the example.

The paper is divided into ten sections. Section $2$ gives a brief review some basic formulas in warped product space. Section $3$ solves the linearized problem. Section $4$ proves the openness theorem. Section $5$ discusses some existence results. In section $6$, we revisit the infinitesimal problem in space form. Section $7$ gives the non infinitesimal rigidity and non rigidity in general warped product space. In section $8$, the counterexample of non rigidity spheres in high dimensional spaces is constructed. Section $9$ gives the proof of Theorem \ref{RC}. In section $10$, we give a Shi-Tam type inequality. In the last section, more detailed calculation on the counterexamples is presented.

\section{The connection and curvature in warped product spaces}
In $n$ dimensional  space, by \eqref{metric}, we write the warped product metric $$ds^2=\frac{1}{f^2(r)}dr^2+r^2(\sum_{i=1}^{n-1}\cos^2\theta_{i-1}\cdots\cos^2\theta_1d\theta^2_i),$$
where$(r,\theta_1,\cdots,\theta_{n-1})$ is the polar coordinate. In this paper, we always use $\cdot$  to present the metric in the ambient space.  We use $D$ as the Levi-Civita connection with respect to the metric $ds^2$. Suppose $X,Y$ are two vector fields in warped product space. We define the curvature tensor 
$$\bar{R}(X,Y)=D_XD_Y-D_YD_X-D_{[X,Y]}.$$

Then we have the following list of connections
\begin{eqnarray}\label{2.1}
\\
&&D_{\frac{\p}{\p r}}\frac{\p}{\p r}=-\frac{f'}{f}\frac{\p}{\p r}; D_{\frac{\p}{\p \theta_i}}\frac{\p}{\p r}=\frac{1}{r}\frac{\p}{\p \theta_i} \text{ for } i=1,\cdots,n-1\nonumber; \\
&&D_{\frac{\p}{\p \theta_j}}\frac{\p}{\p \theta_i}=D_{\frac{\p}{\p \theta_i}}\frac{\p}{\p \theta_j}=-\frac{\sin \theta_j}{\cos \theta_j}\frac{\p}{\p \theta_i} \text{ for } i,j=1,\cdots, n-1, \text{ and } j\leq i-1;\nonumber\\
&& D_{\frac{\p}{\p\theta_i}}\frac{\p}{\p \theta_i}=-rf^2\cos^2\theta_{i-1}\cdots\cos^2\theta_1\frac{\p}{\p r}+\sum_{j=1}^{i-1}\cos^2\theta_{i-1}\cdots\cos^2\theta_{j+1}\cos\theta_j\sin\theta_j \frac{\p}{\p \theta_j}\nonumber\\
&& \text{ for } i=1,\cdots,n-1.\nonumber
\end{eqnarray}
The rest ones are zero. Define the orthogonal frame, for $i=1,\cdots, n-1,$
\begin{eqnarray}\label{2.2}
E_1=f\frac{\partial}{\partial r},
E_{i+1}=\frac{1}{r\cos\theta_{i-1}\cdots\cos\theta_1}\frac{\partial}{\partial \theta_i}.
\end{eqnarray}
Let
$$\bar{R}_{ijkl}=\bar{R}(E_i,E_j)E_k\cdot E_l.$$
A direct calculation shows
 \begin{eqnarray}\label{curv}
&&\bar{R}_{1m1m}=\frac{ff'}{r}, \text{ for } m\neq 1;
\bar{R}_{ijij}=\frac{f^2-1}{r^2}, \text{ and }  i,j \neq 1, i\neq j; \\
&& \bar{R}_{ijkl}=0 \text{ for } i,j,k,l \text{ take three different indices.} \nonumber
\end{eqnarray}
Correspondingly, the Ricci curvatures are
 \begin{eqnarray}\label{2.4}
&&\bar{R}_{11}=\frac{(n-1)ff'}{r}; \bar{R}_{kk}=\frac{ff'}{r}+(n-2)\frac{f^2-1}{r^2}, \text{ for } k\neq 1;\\
&&\bar{R}_{ij}=0, \text{ for other cases, }\nonumber
\end{eqnarray}
and the scalar curvature is
\begin{eqnarray}\label{2.5}
\bar{R}=\bar{R}_{11}+\sum_{k=2}^n\bar{R}_{kk}=\frac{2(n-1)ff'}{r}+(n-1)(n-2)\frac{f^2-1}{r^2}.
\end{eqnarray}

Suppose $M$ is some hypersurface in $n$ dimensional warped product space and $\kappa_i$ is the $i$-th principal curvature, by the Gauss equation we have
\begin{eqnarray}\label{2.6}
\sigma_2(\kappa_1,\cdots,\kappa_{n-1})=-\frac{R}{2}+\sum_{i<j}\bar{R}_{ijij},
\end{eqnarray}
where  $e_i$ is the orthonormal frame on $M$ and $R$ is the scalar curvature of the hypersurface $M$.
Denote $\nu$ the unit outer normal. We let
$$e_n=\nu=\sum_i\nu^iE_i,$$ where $\nu^i$ is the component in the direction $E_i$ of $\nu$. Hence we have
\begin{eqnarray}
\sum_{i<j}\bar{R}_{ijij}=\frac{\bar{R}}{2}-\bar{Ric}(\nu,\nu),  \bar{Ric}(\nu,\nu)=(\nu^i)^2\bar{R}_{ii}.\nonumber
\end{eqnarray}
By \eqref{2.4}, we have
\begin{eqnarray}\label{2.7}
\sum_{i<j}\bar{R}_{ijij}&=&(n-1)\frac{ff'}{r}+\frac{(n-1)(n-2)}{2}\frac{f^2-1}{r^2}\\
&&-(\nu^1)^2(n-1)\frac{ff'}{r}-\frac{1-(\nu^1)^2}{r^2}[rff'+(n-2)(f^2-1)]\nonumber\\
&=&(n-2)[\frac{ff'}{r}+\frac{n-3}{2}\frac{f^2-1}{r^2}-\frac{(\nu^1)^2}{r^2}(rff'+1-f^2)].\nonumber
\end{eqnarray}
The conformal Killing vector in warped product space is defined by
\begin{eqnarray}\label{CK}
X&=&rf(r)\frac{\p}{\p r}.
\end{eqnarray}
We also define the distance function and support function $$\rho=\frac{1}{2}X\cdot X=\frac{r^2}{2}, \text{ and }  \varphi=X\cdot \nu.$$
Hence taking derivative with respect to $e_i$, we have
\begin{eqnarray}\label{2.9}
\rho_i&=&fX\cdot e_i,\\
\rho_{i,j}&=&\frac{f_\rho}{f}\rho_i\rho_j+f^2g_{ij}-h_{ij}f\varphi,\nonumber
\end{eqnarray}
where $h_{ij}$ is the second fundamental form of hypersurface $M$.

We give  the $r$ geodesic sphere in warped product space as a last example. We have
\begin{eqnarray}\label{sphere}
\nu=f\frac{\p}{\p r},\ \  \varphi=r,\ \ \rho=\frac{r^2}{2}, \ \  h_{ij}=\frac{f(r)}{r}g_{ij}.
\end{eqnarray}

\section{The linearized problem  }
In this section, we try to study the linearized problem in three dimensional warped product space. In three dimensional case, the metric of the ambient space can be rewritten as
$$ds^2=\frac{1}{f^2(r)}dr^2+r^2(d\theta^2+\cos^2\theta d\varphi^2).$$
We will use $\cdot$ to denote this metric. The isometric embedding problem is to find some position vector fields $\vec{r}$ to satisfy the following system
$$d\vec{r}\cdot d\vec{r}=g,$$ where $(\mathbb{S}^2,g)$ is some two dimensional sphere with given metric $g$. The linearized problem is the following system, for any two symmetric tensor $q$,
$$d\vec{r}\cdot D\tau=q,$$ where $\tau$ is the variation of $\vec{r}$ and $D$ is the Levi-Civita connection with respect to $ds^2.$
Now, let's check the above system.
In fact, for a family of surfaces $\vec{r}_t:\mathbb{S}^2\mapsto \mathbb{R}^3$ we have $$d\vec{r}_t\cdot d\vec{r}_t=g_t.$$ Taking derivative with respect to $t$, we have
$$\left.\left(\frac{\p}{\p t}d\vec{r}_t\cdot d\vec{r}_t\right)\right|_{t=0}=2d\vec{r}\cdot (D_{\frac{\p}{\p t}}d\vec{r}_t)|_{t=0}=2q.$$
Hence, it suffices to check that
$$(D_{\frac{\p}{\p t}}(\vec{r}_t)_i)|_{t=0}=D_i\tau.$$
The above equation has been shown at first in \cite{LW}. Here we give a rough review.
Suppose the coordinate of $\mathbb{R}^3$ is $\{ z^1, z^2, z^3\},$ hence
$(\vec{r}_t)_i=(\vec{r}_t)_i^m\frac{\p}{\p z^m}$. Then we have
\begin{eqnarray}
D_{\frac{\p}{\p t}}(\vec{r}_t)_i=\frac{\p (\vec{r}_i^m)}{\p t}\frac{\p}{\p z^m}+(\vec{r}_t)^m_iD_{\frac{\p}{\p t}}\frac{\p}{\p z^m}.\nonumber
\end{eqnarray}
At $t=0,$ we also write $\tau=\tau^l\frac{\p}{\p z^l}$ since $\frac{\p}{\p t}=\tau.$ By Levi-Civita property, we have
\begin{eqnarray}
(D_{\frac{\p}{\p t}}(\vec{r}_t)_i)|_{t=0}&=&\tau_i^m\frac{\p}{\p z^m}+\vec{r}_i^mD_{\tau}\frac{\p}{\p z^m}\nonumber\\
&=&\tau^m_i\frac{\p}{\p z^m}+\tau^l\vec{r}^m_iD_{\frac{\p}{\p z^l}}\frac{\p}{\p z^m}\nonumber\\
&=&\frac{\p \tau^m}{\p x^i}\frac{\p}{\p z^m}+\tau^lD_{\frac{\p}{\p x^i}}\frac{\p}{\p z^l}\nonumber\\
&=&D_i\tau.\nonumber
\end{eqnarray}

 Let $$\omega=\tau\cdot d\vec{r}=u_1dx^1+u_2dx^2,$$ where $u_i=\tau\cdot \vec{r}_i, \phi=\tau\cdot \nu$ and $\nu$ is the unit normal of the surface $\vec{r}$. Denote the connection on $S^2$ with respect to the metric $g$ by $\nabla,$ then we have
\begin{eqnarray}\label{3.1}
\nabla \omega&=&du_i\otimes dx^i+u_i\nabla dx^i\\
&=&\vec{r}_i\cdot D\tau\otimes dx^i+\tau\cdot D\vec{r}_i\otimes dx^i+u_i\nabla dx^i\nonumber\\
&=&\vec{r}_i\cdot D_j\tau dx^i\otimes dx^j+\tau\cdot \nabla r_i\otimes dx^i+\tau\cdot D^{\perp}_{\vec{r}_j}\vec{r}_i dx^j\otimes dx^i+u_i\nabla dx^i.\nonumber
\end{eqnarray}
By the definition of the second fundamental form, we have
$$D^{\perp}_{\vec{r}_j}\vec{r}_i =h_{ij}\nu.$$ By the definition of the connection, we have
$$\nabla \vec{r}_i=\Gamma_{ij}^k\vec{r}_k\otimes dx^j.$$
Since $\Gamma^k_{ij}=\Gamma^k_{ji}$, we have
\begin{eqnarray}\label{3.2}
\nabla \omega&=&\vec{r}_i\cdot D_j \tau dx^i\otimes dx^j+\phi h_{ij}dx^i\otimes dx^j\\&&+\Gamma_{ij}^ku_kdx^j\otimes dx^i-u_i\Gamma^i_{ml}dx^m\otimes dx^l\nonumber\\
&=&(\vec{r}_i\cdot D_j \tau+\phi h_{ij})dx^i\otimes dx^j.\nonumber
\end{eqnarray}
The above equation is equivalent to the following system
\begin{eqnarray}\label{3.3}
\left \{ \begin{matrix} u_{1,1}&=&q_{11}+\phi h_{11} \\  u_{1,2}+u_{2,1}&=&2(q_{12}+\phi h_{12})\\  u_{2,2}&=& q_{22}+\phi h_{22}\end{matrix}\right..
\end{eqnarray}
Denote the symmetrization operator by $\mathrm{Sym}$, namely  $$\mathrm{Sym}:T^*\mathbb{S}^2\otimes T^* \mathbb{S}^2\mapsto \mathrm{Sym}(T^*\mathbb{S}^2\otimes T^* \mathbb{S}^2).$$ Then we define an operator
\begin{eqnarray}
L: T^*\mathbb{S}^2&\mapsto &\mathrm{Sym}(T^*\mathbb{S}^2\otimes T^* \mathbb{S}^2)\nonumber\\ 
L(\omega)&=&\mathrm{Sym}(\nabla \omega).\nonumber
\end{eqnarray}
Hence \eqref{3.3} becomes   $$L(\omega)=q+\phi h.$$
Taking trace with respect to $h$, we have
$$\mathrm{tr}_h(\nabla \omega)=h^{ij}u_{i,j}=\mathrm{tr}_h(q)+2\phi.$$ Then  we have $$\phi= \frac{\mathrm{tr}_h(\nabla\omega-q)}{2}.$$
For convenience, define a new operator
\begin{eqnarray}\label{L_h}
L_h(\omega)=L(\omega)-\frac{\mathrm{tr}_h(\nabla \omega)}{2}h=q-\frac{\mathrm{tr}_h(q)}{2}h,
\end{eqnarray}
which is a map from cotangent bundle to symmetric $2$-tensor product of cotangent bundle. We claim that  $L_h$ is a strong elliptic operator. For $\xi=(\xi_1,\xi_2)$, $|\xi|=1$, if the principal symbol of the operator $L_h$ is zero, namely
$$\sigma(L_h)=0,$$ which implies
\begin{eqnarray}\label{3.4}
\left \{ \begin{matrix} u_{1}\xi_1-\eta h_{11}&=&0 \\  u_{1}\xi_2+u_{2}\xi_1-2\eta h_{12}&=& 0 \\  u_{2}\xi_2-\eta h_{22}&=& 0\end{matrix}\right.,
\end{eqnarray}
where $2\eta=h^{ij}u_i\xi_j$. If $\xi_1,\xi_2\neq 0,$ then $u_1=\eta h_{11}/\xi_1$ and $u_2=\eta h_{22}/\xi_2$. Hence  we have
$$0=\eta(\frac{\xi_2h_{11}}{\xi_1}+\frac{\xi_1 h_{22}}{\xi_2}-2h_{12})=\frac{\eta}{\det h \xi_1\xi_2}(h^{ij}\xi_i\xi_j).$$ Since $h$ is positive definite, we get $\eta=0$ which implies $u_1=u_2=0$.  If $\xi_1=0,$ then $\eta=0$ and $\xi_2\neq 0$. Furthermore, by the third equality of  \eqref{3.4}, we have $u_2=0$. By the second equality of \eqref{3.4}, we also have $u_1=0$.  Hence the operator $L_h$ is a strong elliptic operator on compact manifold.

 Since the strong elliptic operator is  a Fredholm operator, we know that
$$\mathrm{coker}(L_h)=\ker(L_h^*).$$ Hence to prove the solvability of the linearized problem, it suffices to show that the kernel of  $L_h^*$ is zero. Let's compute the adjoint operator. At first define some inner product space $H$
$$H:=\{a\in C^{\infty}(Sym(T^*\mathbb{S}^2\otimes T^*\mathbb{S}^2)); \mathrm{tr}_h(a)=0\} $$  equipped with the inner product
$$(a,b)=\int_{\mathbb{S}^2}Kh^{ij}h^{mn}a_{im}b_{jn}dV_g$$
 for $a,b\in H,$ where
 $K$ is $\sigma_2$ curvature  for surface $\vec{r}$ in ambient space, namely
$$K=\frac{\det h}{\det g}.$$
 We also define the inner product in $C^{\infty}(T^*\mathbb{S}^2)$, $$(\omega,\eta)=\int_{\mathbb{S}^2}Kh^{ij}\omega_{i}\eta_{j}dV_g$$ for $\omega,\eta\in C^{\infty}(T^*\mathbb{S}^2)$.

  The adjoint operator $L_h^*$ is defined by
  $$(\omega,L^*_h(a))=(L_h(\omega), a).$$
Then we have
\begin{eqnarray}\label{3.6}
\\
(L_h(\omega),a)
&=&\int_{\mathbb{S}^2}Kh^{ij}h^{mn}u_{i,m}a_{jn}dV_g+\frac{1}{2}\int_{\mathbb{S}^2}K\mathrm{tr}_h(\nabla \omega)h^{ij}h^{mn}h_{im}a_{jn}dV_g\nonumber\\
&=&-\int_{\mathbb{S}^2}(Kh^{ij}h^{mn}a_{jn})_{,m} u_idV_g.\nonumber
\end{eqnarray}
where we have used $h^{ij}a_{ij}=0.$
Hence the dual operator is
$$(L^*_h(a))_k=-(Kh^{ij}h^{mn}a_{jn})_{,m}h_{ik}.$$  Thus $L^*_h(a)=0$ implies
$$(Kh^{ij}h^{mn}a_{jn})_{,m} =0.$$
Let $A_{ij}$ be the cofactor of $h_{ij},$ the previous equation is
$$(A_{ij}h^{mn}a_{jn})_{,m}=0.$$
For  $i=1$, the above equation can be written as
\begin{eqnarray}
0&=&(A_{11}h^{1n}a_{1n}+A_{12}h^{1n}a_{2n})_{,1}+(A_{11}h^{2n}a_{1n}+A_{12}h^{2n}a_{2n})_{,2}\nonumber\\
&=&(-A_{11}h^{2n}a_{2n}+A_{12}h^{1n}a_{2n})_{,1}+(A_{11}h^{2n}a_{1n}-A_{12}h^{1n}a_{1n})_{,2}\nonumber\\
&=&(-h_{22}h^{2n}a_{2n}-h_{12}h^{1n}a_{2n})_{,1}+(h_{22}h^{2n}a_{1n}+h_{12}h^{1n}a_{1n})_{,2}\nonumber\\
&=&-(\delta_{2n}a_{2n})_{,1}+(\delta_{2n}a_{1n})_{,2}\nonumber\\
&=&a_{21,2}-a_{22,1},\nonumber
\end{eqnarray}
where  we have used $h^{ij}a_{ij}=0$ in the second equality.
 A similar argument yields $$a_{11,2}=a_{12,1}.$$
Then we obtain, for $k=1,2$,
$$a_{k1,2}=a_{k2,1}.$$ Hence the tensor $a$ satisfy homogenous linearized Gauss-Codazzi system
\begin{eqnarray}\label{3.7}
\left\{\begin{matrix}h^{ij}a_{ij}=0\\a_{ij,k}=a_{ik,j}\end{matrix}\right..
\end{eqnarray}
In what follows, we will prove that \eqref{3.7}  has  trivial solution only which implies $\ker(L_h^*)=0$.

Let's define cross product $\times$ induced by arbitrary inner product $\cdot$. It is a standard procedure. Once we fix an orientation, for any two vector $a,b, $ the cross product is the unique bilinear map satisfying the following two properties:
\begin{eqnarray}\label{3.8}
  a\times b \cdot a=a\times b\cdot b=0 ;  |a\times b|^2=(a\cdot a)(b\cdot b)-(a\cdot b)^2.
\end{eqnarray}
It is easy to check that the following triple scalar product formula holds for any vectors $a,b,c$, $$a\times b\cdot c=a\cdot b\times c.$$

We define some $(1,1)$ tensor related to the linearized Gauss-Codazzi system. Let $$A_k=g^{ij}a_{ki}\nu\times \vec{r}_j,$$ where $\nu$ is the unit normal of surface $\vec{r}$ and in the direction of $\vec{r}_1\times \vec{r}_2$. By \eqref{3.8}, $A_k$ is a tangential vector. Define
\begin{eqnarray}\label{omg}
\Omega=A_k\otimes dx^k,
\end{eqnarray}
then $\Omega$ is a $(1,1)$ tensor on sphere $\mathbb{S}^2.$ More precisely, by \eqref{3.8} and triple scalar product formula, we have
\begin{eqnarray}
\left\{\begin{matrix}\label{3.9}
A_1&=&\frac{1}{\sqrt{\det g}}(-a_{12}\frac{\p}{\p x_1}+a_{11}\frac{\p}{\p x_2})\\
A_2&=&\frac{1}{\sqrt{\det g}}(-a_{22}\frac{\p}{\p x_1}+a_{21}\frac{\p}{\p x_2})
\end{matrix}\right..
\end{eqnarray}
\begin{lemm}\label{le7}
For any vector $E$ in ambient space satisfying
\begin{eqnarray}\label{3.10}
d\vec{r}\cdot DE=0,
\end{eqnarray}
we can define some $1$- form on $\mathbb{S}^2$ $$\omega=\Omega\cdot E=A_k\cdot E dx^k.$$ Then  $\omega$ is a closed form, and in fact it is zero.
\end{lemm}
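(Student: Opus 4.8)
The statement makes two claims about $\omega=\Omega\cdot E=(A_k\cdot E)\,dx^k$: that it is closed, and that it vanishes identically. I would establish them in that order.

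\emph{Closedness.} Since the ambient metric $\cdot$ is parallel for $D$, differentiating gives $\partial_j(A_k\cdot E)=(D_{\vec{r}_j}A_k)\cdot E+A_k\cdot(D_{\vec{r}_j}E)$. By \eqref{3.8} the vector $A_k=g^{ij}a_{ki}\,\nu\times\vec{r}_j$ is tangent to the surface, and the hypothesis \eqref{3.10} says exactly that every $\vec{r}_i\cdot D_{\vec{r}_j}E$ vanishes, so the last term drops and $(d\omega)_{jk}=(D_{\vec{r}_j}A_k-D_{\vec{r}_k}A_j)\cdot E$. I would then split $D_{\vec{r}_j}A_k$ relative to the surface into its tangential part $(\nabla_jA_k^m)\,\vec{r}_m$ and its normal part $h_{jm}A_k^m\,\nu$ (the normal coefficient read off from $0=\partial_j\langle A_k,\nu\rangle$ and the Weingarten equation). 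Using the explicit expression \eqref{3.9}, equivalently $A_k^m=-\varepsilon^{mn}a_{kn}$ with $\varepsilon$ the $\nabla$-parallel area form of $g$, one finds that the antisymmetrization in $j,k$ of the normal part is proportional to $h^{ij}a_{ij}$ (concretely, to $h_{22}a_{11}-2h_{12}a_{12}+h_{11}a_{22}$), which vanishes by the trace equation in \eqref{3.7}; and the antisymmetrization of the tangential part is $-\varepsilon^{mn}(a_{kn,j}-a_{jn,k})\langle\vec{r}_m,E\rangle$, which vanishes because the Codazzi symmetry $a_{ij,k}=a_{ik,j}$ in \eqref{3.7} gives $a_{kn,j}-a_{jn,k}=a_{kj,n}-a_{jk,n}=0$. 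Hence $d\omega=0$.

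\emph{Vanishing.} Closedness and simple connectivity of $\mathbb{S}^2$ already give $\omega=dp$ for some $p\in C^\infty(\mathbb{S}^2)$, so the point is to see that $E$ contributes only a constant. I would exploit that \eqref{3.10} also makes the companion $1$-form $\langle E,\vec{r}_j\rangle\,dx^j$ closed, with a primitive $g$ satisfying $\nabla^2 g=\langle E,\nu\rangle\,h$; running the construction over a spanning set of such ambient fields $E$ then furnishes a map whose differential is $A_k\,dx^k=\Omega$, after which a Blaschke--Minkowski type integration by parts over the closed surface --- in the spirit of the Euclidean arguments the introduction attributes to \cite{GWZ} and \cite{HNY} --- produces an integrand that is sign-definite thanks to the strict convexity of $\vec{r}$ (that is, $K>0$). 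This forces the rotation-type field to vanish, so $a\equiv0$ in \eqref{3.9}, hence $A_k\equiv0$ and $\omega\equiv0$.

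The closedness step is essentially bookkeeping and relies on nothing but the two linearized Gauss--Codazzi relations \eqref{3.7} and the defining equation \eqref{3.10} of $E$. The real work is the vanishing, which I expect to be the main obstacle: pointwise one only has $\omega_k=-\varepsilon^{mn}a_{kn}\langle E,\vec{r}_m\rangle$, which has no reason to be zero for a generic solution $a$ of \eqref{3.7}, so the conclusion must be drawn globally, and the delicate point is to pin down exactly which integral identity (or auxiliary function) converts strict convexity into $a\equiv0$.
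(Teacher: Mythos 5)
Your closedness argument is correct and is essentially the paper's: one uses \eqref{3.10} to kill the $A_k\cdot D_jE$ contribution, and the two equations in \eqref{3.7} to kill the tangential and normal parts of the antisymmetrized $D_jA_k$. That part matches.

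The vanishing half does not, and it contains a concrete error. You claim that \eqref{3.10} makes the companion $1$-form $\langle E,\vec{r}_j\rangle\,dx^j$ closed. It does not: since $D_i\vec{r}_j=D_j\vec{r}_i$, the curl of that form is $\langle D_1E,\vec{r}_2\rangle-\langle D_2E,\vec{r}_1\rangle$, and \eqref{3.10} only says that the \emph{symmetric} part $\langle D_iE,\vec{r}_j\rangle+\langle D_jE,\vec{r}_i\rangle$ vanishes, so the curl equals $2\langle D_1E,\vec{r}_2\rangle$, which is generically nonzero (for a rotation field in the Euclidean model it is $-2\langle A\vec{r}_1,\vec{r}_2\rangle\ne 0$). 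Consequently there is no primitive $g$ with $\nabla^2 g=\langle E,\nu\rangle h$, and the Blaschke--Minkowski scheme you sketch has no starting point. You are also aiming at too strong a conclusion: the lemma asserts only $\omega=\Omega\cdot E=0$ for the given single $E$, not $a\equiv 0$. The stronger statement $\Omega\equiv0$ is what Theorem~\ref{LE} proves afterwards, and it needs the explicit spanning family $E_\alpha$ of Lemma~\ref{le8} together with $\Omega\cdot\nu=0$; trying to prove $a\equiv0$ inside this lemma collapses that architecture.

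The paper's actual route for the vanishing is a maximum-principle argument on the primitive. Since $\omega$ is closed and $H^1_{DR}(\mathbb{S}^2)=0$, write $\omega=df$, so $f_k=w^l_k\,\vec{r}_l\cdot E$ with $w^l_k$ the $(1,1)$-tensor attached to $a$ by \eqref{3.9}. Taking $\nabla_j\omega$ and contracting with $h^{kj}$, the normal term drops by $h^{ij}a_{ij}=0$ and the $D_jE$ terms drop by \eqref{3.10}, leaving $h^{kj}f_{k,j}=h^{kj}w^l_{k,j}\,\vec{r}_l\cdot E$. Inverting $f_m=w^l_m\,\vec{r}_l\cdot E$ via cofactors gives a homogeneous second-order linear elliptic equation in $f$ with first-order right-hand side; the strict convexity enters through the GWZ-type inequality $(w^1_1)^2+(w^1_2)^2+(w^2_1)^2+(w^2_2)^2\le -C\det w$, which controls the inversion. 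The strong maximum principle on the closed sphere then forces $f$ to be constant, hence $\omega=df=0$. You should replace your integration-by-parts sketch with this elliptic-equation-plus-maximum-principle step.
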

\begin{proof}
It is obvious that $\omega$ is a one form. Then
\begin{eqnarray}
d\omega&=&\p_j(A_k\cdot E)dx^j\wedge dx^k\\
&=&(D_jA_k\cdot E+A_k\cdot D_jE)dx^j\wedge dx^k\nonumber\\
&=&[(D_1A_2-D_2A_1)\cdot E+(A_2\cdot D_1E-A_1\cdot D_2E)]dx^1\wedge dx^2\nonumber.
\end{eqnarray}
By \eqref{3.10}  we have
$$\frac{\p}{\p x_1}\cdot D_1E=0;\frac{\p}{\p x_2}\cdot D_2E=0;\frac{\p}{\p x_1}\cdot D_2E+\frac{\p}{\p x_2}\cdot D_1E=0.$$
Hence, by \eqref{3.9}, we get
$$A_2\cdot D_1E-A_1\cdot D_2E=\frac{a_{21}}{\sqrt{\det g}}\frac{\p}{\p x_2}\cdot D_1E+\frac{a_{12}}{\sqrt{\det g}}\frac{\p}{\p x_1}\cdot D_2E=0.$$
For convenience, we write
$$A_k=w_k^l\vec{r}_l,$$ where $w_k^l$ is a $(1,1)$ tensor, and the relation between $a_{ij}$ and $w_k^l$ is released in \eqref{3.9}.
We decompose the connection into two parts, namely tangential and normal parts $$D=\nabla+D^{\perp}.$$ Then  we have
$$D_iA_k=(w^l_{k,i}+\Gamma_{ik}^mw_{m}^l)\vec{r}_l+w_k^lD^{\perp}_i\vec{r}_l=(w^l_{k,i}+\Gamma_{ik}^mw_{m}^l)\vec{r}_l+w_k^lh_{il}\nu,$$
 where $w^l_{k,i}$ is the covariant derivative for $(1,1)$ tensor $w^l_k$. Since $\Gamma^m_{ik}=\Gamma^m_{ki}$,  by \eqref{3.7} and \eqref{3.9}, we have $$D_1A_2-D_2A_1=(w_{1,2}^l-w_{2,1}^l)\vec{r}_l+(h_{1l}w^l_2-h_{2l}w^l_1)\nu=0.$$
Thus  $\omega$ is a closed one form.

Since the first de Rham cohomology of the sphere is trivial, $H^1_{DR}(\mathbb{S}^2)=0,$ there exists some smooth function $f$ on sphere such that
$$\omega=df=f_kdx^k.$$ Therefore we have
\begin{eqnarray}\label{3.12}
f_k=A_k\cdot E=w^l_k\vec{r}_l\cdot E.
\end{eqnarray}
Then using the previous similar formula, we have
\begin{eqnarray}
\nabla_j\omega&=&\p_j(A_k\cdot E)dx^k+A_k\cdot E \nabla_jdx^k \\
&=&(D_jA_k\cdot E+A_k\cdot D_jE)dx^k-\Gamma^k_{jl}A_k\cdot E dx^l\nonumber\\
&=&[(w^l_{k,j}+\Gamma_{jk}^mw_{m}^l)\vec{r}_l\cdot E+w_k^lh_{jl}\nu\cdot E+w_k^l\vec{r}_l\cdot D_jE]dx^k\nonumber\\
&&-\Gamma^k_{jl}w_k^m\vec{r}_m\cdot E dx^l\nonumber\\
&=&[w^l_{k,j}\vec{r}_l\cdot E+w_k^lh_{jl}\nu\cdot E+w_k^l\vec{r}_l\cdot D_jE]dx^k\nonumber.
\end{eqnarray}
Hence we get
$$f_{k,j}=w^l_{k,j}\vec{r}_l\cdot E+w_k^lh_{jl}\nu\cdot E+w_k^l\vec{r}_l\cdot D_jE.$$
Then we obtain
\begin{eqnarray}\label{3.14}
h^{kj}f_{k,j}&=&h^{kj}w^l_{k,j}\vec{r}_l\cdot E+w^k_k\nu\cdot E+h^{kj}w_k^l\vec{r}_l\cdot D_jE\\
&=&h^{kj}w^l_{k,j}\vec{r}_l\cdot E+(\frac{-a_{12}}{\sqrt{\det g}}+\frac{a_{21}}{\sqrt{\det g}})\nu\cdot E\nonumber\\
&&+h^{1k}w_k^2\vec{r}_2\cdot D_1E+h^{2k}w_k^1\vec{r}_1\cdot D_2E\nonumber\\
&=&h^{kj}w^l_{k,j}\vec{r}_l\cdot E+(h^{1k}w_k^2-h^{2k}w_k^1)\vec{r}_2\cdot D_1E\nonumber\\
&=&h^{kj}w^l_{k,j}\vec{r}_l\cdot E+\frac{h^{ij}a_{ij}}{\sqrt{\det g}}\vec{r}_2\cdot D_1E\nonumber\\
&=&h^{kj}w^l_{k,j}\vec{r}_l\cdot E\nonumber.
\end{eqnarray}
By the Lemma 4 in \cite{GWZ}, we have
$$(w^1_1)^2+(w^1_2)^2+(w^2_1)^2+(w^2_2)^2\leq -C\det w.$$
We conclude that
$$h^{kj}f_{k,j}=h^{kj}w^l_{k,j}\frac{B^m_l f_m}{\det w},$$  where $B^m_l$ is the cofactor of $w^m_l$.
We also have  for $l=1$,
\begin{eqnarray}
h^{ij}w^1_{i,j}
&=&h^{11}w^1_{1,1}+h^{12}w^1_{1,2}+h^{21}w^1_{2,1}+h^{22}w^1_{2,2}\nonumber\\
&=&\frac{1}{\sqrt{\det g}}(-h^{11}a_{12,1}-h^{12}a_{12,2}-h^{21}a_{22,1}-h^{22}a_{22,2})\nonumber\\
&=&-\frac{1}{\sqrt{\det g}}(h^{11}a_{11,2}+h^{12}a_{12,2}+h^{21}a_{21,2}+h^{22}a_{22,2})\nonumber\\
&=&-\frac{1}{\sqrt{\det g}}h^{ij}a_{ij,2}=\frac{1}{\sqrt{\det g}}h^{ij}_{,2}a_{ij}\nonumber.
\end{eqnarray}
Similarly, we have
$$h^{ij}w^2_{i,j}=\frac{1}{\sqrt{\det g}}h^{ij}_{,1}a_{ij}.$$
The strong maximum principle implies that
$f$ is a constant function on sphere which means
$$\omega=df=0.$$
\end{proof}

We will find some special solutions to \eqref{3.10}.
\begin{lemm} \label{le8}
Suppose the coordinate for $\mathbb{R}^3$ are $\{z^1, z^2, z^3\}$. Let $$E_\alpha=\frac{\p}{\p z^{\alpha}}\times_{E} \vec{r},$$ where $\times_E$ is the cross product induced by Euclidean metric, then $E_\alpha$ satisfy \eqref{3.10} for $\alpha=1,2,3$.
And we also have $$E_{\alpha}=\frac{\p}{\p z^{\alpha}}\times  X,$$ where $X$ is the conformal Killing vector.
\end{lemm}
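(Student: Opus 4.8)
The plan is to recognise the fields $E_\alpha$ as the generators of the rotation group of the ambient warped product space, and to deduce \eqref{3.10} directly from the Killing equation. Writing the position vector as $\vec r=z^\beta\frac{\p}{\p z^\beta}$, one computes with the Euclidean cross product that
$$E_\alpha=\frac{\p}{\p z^\alpha}\times_E\vec r=\sum_{\beta,\gamma}\varepsilon_{\alpha\beta\gamma}\,z^\beta\frac{\p}{\p z^\gamma},$$
so that, up to sign, $E_\alpha$ is the infinitesimal generator of the rotations fixing the $z^\alpha$-axis; these span the Lie algebra of the action $z\mapsto Rz$, $R\in\mathrm{SO}(3)$, on $\mathbb R^3$. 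Since rotations preserve $r=|z|$ and act isometrically on the round sphere factor, while $f$ depends only on $r$, the metric \eqref{metric} is invariant under this action, so each $E_\alpha$ is a Killing field for $ds^2$. A Killing field $E$ satisfies $D_YE\cdot Z+D_ZE\cdot Y=0$ for all ambient vector fields $Y,Z$; taking $Y=\vec r_i$, $Z=\vec r_j$ tangent to the surface gives $\vec r_i\cdot D_jE_\alpha+\vec r_j\cdot D_iE_\alpha=0$ for all $i,j$, which is exactly the system \eqref{3.10} in the component form used in Lemma~\ref{le7}.

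For the identity $E_\alpha=\frac{\p}{\p z^\alpha}\times X$, the point is that in Cartesian coordinates the conformal Killing vector is $X=rf(r)\frac{\p}{\p r}=f(r)\,\vec r$, a pointwise positive multiple of $\vec r$, hence radial. Now both the Euclidean metric and $ds^2$ split each tangent space orthogonally as $\mathbb R\frac{\p}{\p r}$ plus the $2$-plane tangent to the coordinate sphere, and they restrict to the same inner product on that $2$-plane, differing only through $|\frac{\p}{\p r}|^2=1/f^2$ for $ds^2$. Because $X$ and $\vec r$ are radial, both $\frac{\p}{\p z^\alpha}\times X$ and $\frac{\p}{\p z^\alpha}\times_E\vec r$ are tangent to the coordinate sphere and orthogonal there to $\frac{\p}{\p z^\alpha}$, hence lie on one common line; a short length computation from the normalisation \eqref{3.8}, using $|X|^2=r^2$, shows both have the same length. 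With the orientations for $\times$ and $\times_E$ chosen compatibly they therefore agree, which gives $\frac{\p}{\p z^\alpha}\times X=\frac{\p}{\p z^\alpha}\times_E\vec r=E_\alpha$.

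I expect the main obstacle to be this last comparison of the two cross products rather than the Killing-field step: one must check that $\frac{\p}{\p r}$ is the orthogonal complement of the coordinate sphere for both metrics and that the two metrics agree on the sphere-tangent plane (so that ``tangent to the sphere and orthogonal to $\frac{\p}{\p z^\alpha}$'' cuts out the same line), and then pin down the orientation convention so that equality of lengths forces equality of the vectors rather than a sign ambiguity. An alternative to the symmetry argument in the first part is to differentiate $E_\alpha$ along the surface directly from the connection table \eqref{2.1} and verify the three scalar identities of \eqref{3.10} by hand, but this is longer and less illuminating.
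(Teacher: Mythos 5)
Your proposal is correct and takes essentially the same route as the paper: both parts rest on the rotational symmetry of the warped metric, and the identity $\frac{\p}{\p z^{\alpha}}\times_E\vec r=\frac{\p}{\p z^{\alpha}}\times X$ is proved exactly as in the paper, by noting both vectors are sphere-tangent and cut out the same line, then matching norms (using $|X|^2=r^2$ and the agreement of the two metrics on the sphere-tangent plane) with a compatible orientation. The only cosmetic difference is that you verify \eqref{3.10} through the Killing equation for $E_\alpha$, while the paper differentiates the rotated family $\vec r_t=A_t\vec r$ at $t=0$; these are equivalent formulations of the same symmetry argument.
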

\begin{proof}
In view of the discussion in the procedure to derive the linear equation. We only need to construct some family of embedding $\vec{r}_t$, such that
$$d\vec{r}_t\cdot d\vec{r}_t=d\vec{r}\cdot d\vec{r}.$$
Then $E=\frac{d \vec{r}_t}{dt}|_{t=0}$ satisfies \eqref{3.10}. It is obvious that  the warped product space is rotationally symmetric. Explicitly, for any orthogonal matrix $A$, we always have $A\vec{r}\cdot A\vec{r}=\vec{r}\cdot\vec{r}$.
Let $O(3)$ be the  orthogonal group in dimension $3$. We know that the Lie algebra $o(3)$ of $O(3)$  is the collection of all $3\times 3$ anti-symmetric matrices. It is well known that $o(3)$ is $3$ dimensional and its basis can be expressed by $E_{\alpha}$. Thus for any $E_{\alpha}$, we always can find some family of orthogonal matrices  $A_t,$ which is a path generated by $E_{\alpha}$ from the identity matrix $I$ in
$O(3)$ using exponential map. Let $$\vec{r}_t=A_t\vec{r},$$ we have
$$d\vec{r}_t\cdot d\vec{r}_t=A_t d\vec{r}\cdot A_t d\vec{r}=d\vec{r}\cdot  d\vec{r},$$ which implies that $E_{\alpha}$ satisfy \eqref{3.10}.

The conformal Killing vector field is defined by \eqref{CK}.
In fact, we will prove, for any vector $a$, $$a\times_E\vec{r}=a\times X.$$
Suppose the polar coordinate is $(r,\theta, \varphi)$, and write
\begin{eqnarray}\label{vec}
a=\xi\frac{\p}{\p r}+\eta\frac{\p}{\p \theta}+\zeta\frac{\p}{\p \varphi},\text{ and } \vec{r}=r\frac{\p}{\p r}.
\end{eqnarray}
 With the same orientation in $\mathbb{R}^3$, we have
$$a\times_E \vec{r}=r(\frac{\eta}{\cos\theta}\frac{\p}{\p \varphi}-\zeta\cos\theta\frac{\p}{\p \theta}),$$ which is perpendicular to $a,\vec{r}$. Thus $a\times_E \vec{r}$ is parallel to
$a\times X$. Then we obtain $$|a\times X|^2=|a|^2|X|^2-(a\cdot X)^2=r^2f^2(\zeta^2\cos^2\theta+\eta^2)=|a\times_E \vec{r}|^2.$$
The claim is proved.
\end{proof}

The support function in warped product space is defined by $$\varphi(p)=X\cdot \nu,$$ where $\nu$ is the unit outward normal of surface $\vec{r}$ and $X$ is defined by \eqref{CK}. We also define another set
$$m_0=\{\vec{r}(p)=0;p\in \mathbb{S}^2\}.$$ Since $\vec{r}$ is regular surface, $m_0$ is a finite set by the compactness of sphere.
We have the following fact,
\begin{prop}
The set $Z=\{\varphi(p)=0; p\in\mathbb{S}^2\setminus m_0\}$ is a regular curve on sphere.
\end{prop}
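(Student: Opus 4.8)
The strategy is to apply the regular value theorem: once we know that $0$ is a regular value of the smooth function $\varphi$ on the open set $\mathbb{S}^2\setminus m_0$, the zero set $Z=\varphi^{-1}(0)$ is automatically an embedded one-dimensional submanifold, i.e.\ a regular curve. (Recall $m_0$ is finite, so $\mathbb{S}^2\setminus m_0$ is open and $Z$ is a closed submanifold of it, possibly with several components accumulating only at $m_0$.) Thus everything reduces to showing that $\nabla\varphi(p)\neq 0$ whenever $p\in\mathbb{S}^2\setminus m_0$ and $\varphi(p)=0$.

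To this end I first compute the gradient of $\varphi=X\cdot\nu$ along the embedded surface. Differentiating in a tangent direction $e_i$,
\begin{eqnarray}
e_i(\varphi)=(D_{e_i}X)\cdot\nu+X\cdot D_{e_i}\nu.\nonumber
\end{eqnarray}
A direct check from the connection formulas \eqref{2.1} (writing $X=rE_1$ and using $D_{E_1}E_1=0$ and $D_{E_j}E_1=\tfrac{f}{r}E_j$ for $j\neq 1$) shows that $D_YX=fY$ for every tangent vector $Y$; in particular $D_{e_i}X=f e_i$ is tangential, so the first term vanishes. For the second term the Weingarten relation $D_{e_i}\nu=-h_i^{\,j}e_j$ together with $\rho_i=f\,X\cdot e_i$ from \eqref{2.9} gives
\begin{eqnarray}\label{gradphi}
\varphi_i=-\frac{1}{f}\,h_i^{\,j}\rho_j .
\end{eqnarray}

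Now suppose $p\in\mathbb{S}^2\setminus m_0$ satisfies $\varphi(p)=0$ and $\nabla\varphi(p)=0$. Since the surface $\vec{r}$ is strictly convex, $h_{ij}$ is positive definite, hence invertible, so \eqref{gradphi} forces $\rho_j(p)=0$ for all $j$, that is $X(p)\cdot e_i(p)=0$: the tangential part of $X$ vanishes at $p$. But $\varphi(p)=X(p)\cdot\nu(p)=0$ says its normal part vanishes as well, so $X(p)=0$. Since $|X|^2=2\rho=r^2$, this means $r(p)=0$, i.e.\ $\vec{r}(p)=0$, contradicting $p\notin m_0$. Hence $\nabla\varphi\neq 0$ on $Z$, so $0$ is a regular value of $\varphi$ on $\mathbb{S}^2\setminus m_0$, and $Z$ is a regular curve.

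The argument has no serious obstacle; the only points requiring care are the sign convention in the Weingarten map and the verification that $D_YX=fY$ (equivalently, that $X$ is a gradient conformal Killing field with factor $f$), neither of which affects the conclusion, since only the nondegeneracy of $h$ and the identity $\rho_i=fX\cdot e_i$ enter in an essential way. The geometric content is simply that $\varphi$ and $\nabla\varphi$ cannot vanish simultaneously away from $m_0$, because that would force the conformal Killing field $X$ itself to vanish there.
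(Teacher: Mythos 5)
Your proof is correct and follows essentially the same route as the paper's: both compute $\varphi_i$ via the Weingarten map after observing $D_{e_i}X=fe_i$ kills the first term, both use positive definiteness of $h$ to conclude the tangential part of $X$ vanishes at a critical point of $\varphi$ on $Z$, and both derive the contradiction $\vec{r}(p)=0$. The only cosmetic difference is that you phrase the non-vanishing of the tangential part of $X$ through $\rho_j$ rather than through the coefficients $a^i$ of $X=a^i\vec{r}_i$, and you make the appeal to the regular value theorem explicit.
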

\begin{proof}
We only need to check that on $Z$, $d \varphi\neq 0$. Since $X$ is a conformal Killing vector, we have
$$\p_i\varphi=D_iX\cdot \nu+X\cdot D_i n=f\vec{r}_i\cdot \nu+X\cdot D_i \nu=X\cdot D_i\nu.$$ Note that $$D_i\nu\cdot \vec{r}_j=-\nu\cdot D_i\vec{r}_j=-h_{ij}.$$ Thus we get $$D_i\nu=-g^{kl}h_{ik}\vec{r}_l.$$ Since $X\cdot \nu=0$ on $Z,$ we can assume $X=a^i\vec{r}_i$. Then we have
$$\p_i z=-a^kh_{ik}=0,$$ which implies $a^k=0$, namely $X=0$. Hence we know that $\vec{r}(p)=0$ which contradicts to $p\notin m_0.$
\end{proof}

We are in the position to prove the following Theorem.
\begin{theo}\label{LE}
Suppose $\vec{r}$ is a strictly convex surface in warped product space. For any $(0,2)$ symmetric tensor $q$, there always exists some vector field $\tau$ satisfying the following equation,
$$d\vec{r}\cdot D\tau=q.$$
\end{theo}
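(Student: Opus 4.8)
The plan is to reduce the existence of $\tau$ solving $d\vec r\cdot D\tau=q$ to the solvability of the operator $L_h$ introduced above, and then to invoke Fredholm theory together with the vanishing of $\ker L_h^*$. Recall that, given a $1$-form $\omega$ on $\mathbb S^2$, the pair $(\omega,\phi)$ with $\phi=\tfrac12\mathrm{tr}_h(\nabla\omega-q)$ produces, via $u_i=\omega_i$, $\phi=\tau\cdot\nu$, a candidate tangential-plus-normal decomposition of $\tau$; conversely any $\tau$ gives such an $\omega$. The system $d\vec r\cdot D\tau=q$ is equivalent to $L(\omega)=q+\phi h$, i.e.\ after taking the $h$-trace to solve for $\phi$, to the equation $L_h(\omega)=q-\tfrac12\mathrm{tr}_h(q)\,h$ in \eqref{L_h}. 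So the first step is to record this equivalence carefully: produce $\tau$ from $(\omega,\phi)$ by setting $\tau=g^{ij}u_i\vec r_j+\phi\nu$ and check, using \eqref{3.1}--\eqref{3.3}, that $d\vec r\cdot D\tau=q$ holds exactly when $\omega$ solves the $L_h$-equation.

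The second step is to apply the analytic input already assembled in this section. We showed $L_h$ is a strongly elliptic (hence Fredholm) operator on the compact manifold $\mathbb S^2$, so its cokernel equals $\ker L_h^*$, and we computed $L_h^*$ explicitly: $L_h^*(a)=0$ forces $a$ to satisfy the homogeneous linearized Gauss--Codazzi system \eqref{3.7}, namely $h^{ij}a_{ij}=0$ and $a_{ij,k}=a_{ik,j}$. Then Lemma~\ref{le7} and Lemma~\ref{le8} come in: for each coordinate vector $\p/\p z^\alpha$ in $\mathbb R^3$, the field $E_\alpha=\p/\p z^\alpha\times X$ satisfies \eqref{3.10}, so $\omega_\alpha=\Omega\cdot E_\alpha$ is a closed, hence (by $H^1_{DR}(\mathbb S^2)=0$ and the maximum principle argument in Lemma~\ref{le7}) identically zero, $1$-form. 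This means $A_k\cdot E_\alpha=0$ for $k=1,2$ and $\alpha=1,2,3$. Since $\{E_1,E_2,E_3\}$ at a point where $\vec r\neq0$ spans a $2$-plane (the orthogonal complement of $X$, equivalently of $\vec r$), and $A_k$ is by construction already orthogonal to $\nu$, being orthogonal to all three $E_\alpha$ forces $A_k$ to be proportional to the tangential part of $\vec r$; a short linear-algebra computation using \eqref{3.9} then shows $A_k=0$ wherever $\vec r\neq 0$, hence $a_{ij}\equiv0$ on $\mathbb S^2\setminus m_0$, and by continuity everywhere. Therefore $\ker L_h^*=0$, so $L_h$ is surjective, giving the desired $\omega$, hence $\phi$, hence $\tau$.

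I expect the main obstacle to be the passage from $A_k\cdot E_\alpha=0$ for all $\alpha$ to $a_{ij}=0$: one must be careful that the $E_\alpha$ really do span enough directions. The point is that $E_\alpha=\p/\p z^\alpha\times X$ and $X=rf\,\p/\p r$ is (at points off $m_0$) a nonzero radial vector, so $\{E_\alpha\}$ spans the $2$-plane $X^\perp$ in the ambient tangent space; since each $A_k$ lies in the surface's tangent plane, which is transverse to $\nu$ but contains the tangential projection of $X$, the conditions $A_k\cdot E_\alpha=0$ kill $A_k$ unless $A_k$ points along that projection — and then \eqref{3.9} together with $A_1\cdot\p/\p z^\alpha$-type relations pins it down to zero. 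A secondary technical point is handling the finite set $m_0$ where $\vec r=0$: one removes it, concludes $a\equiv 0$ on the complement, and extends by continuity of $a$. The rest — the Fredholm alternative and the explicit form of $L_h^*$ — is already in hand from the computations preceding the theorem, so no further hard analysis is needed.
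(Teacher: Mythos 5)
Your overall strategy matches the paper's: reduce solvability to $\ker L_h^*=0$, identify this with the vanishing of the $(1,1)$ tensor $\Omega$ built from a kernel element $a$, deploy Lemma~\ref{le7} and Lemma~\ref{le8} to get $\Omega\cdot E_\alpha=0$ for $\alpha=1,2,3$ together with $\Omega\cdot\nu=0$ (the latter is automatic since each $A_k$ is tangential), and then try to conclude $\Omega=0$ by pure linear algebra. So far so good.

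The gap is in the linear algebra step, and it is a real one. The conditions $A_k\cdot E_\alpha=0$ for all $\alpha$ say $A_k\perp X^\perp$, i.e.\ $A_k$ lies in the line $\mathbb{R}X$ (not, as you write, along the \emph{tangential projection} of $\vec r$ --- that misstatement is what hides the problem). The additional fact $A_k\cdot\nu=0$ then forces $A_k=0$ \emph{only if} $X\cdot\nu=\varphi\neq0$; when $\varphi=0$ the vector $X$ is itself tangent, so a nonzero multiple of $X$ is perfectly consistent with both constraints, and nothing forces $A_k$ to vanish there. Equivalently, in the paper's formulation, $E_\alpha\times E_\beta\cdot\nu=(\tfrac{\p}{\p z^\alpha}\times\tfrac{\p}{\p z^\beta}\cdot X)\,\varphi$, and the second factor kills the triple product exactly on $Z=\{\varphi=0\}$. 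So the set you must excise is not just $m_0=\{\vec r=0\}$ but $Z\cup m_0$. Your proposed fix --- that ``\eqref{3.9} together with $A_1\cdot\p/\p z^\alpha$-type relations pins it down to zero'' --- does not work: \eqref{3.9} is just the coordinate expression of $A_k$ in terms of $a_{ij}$, and there are no given orthogonality relations against the flat basis vectors $\p/\p z^\alpha$ (only against $E_\alpha=\p/\p z^\alpha\times X$, which are different vectors).

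The paper closes this gap with its preceding Proposition: $Z$ is a regular curve on $\mathbb{S}^2$ (proved by showing $d\varphi\neq0$ on $Z\setminus m_0$, using strict convexity), so $Z\cup m_0$ is a closed set with empty interior. Hence the pointwise argument on $\mathbb{S}^2\setminus(Z\cup m_0)$ shows $\Omega=0$ on a dense open set, and continuity of $\Omega$ gives $\Omega\equiv0$. You invoke continuity only to handle $m_0$; you should invoke it for $Z$ as well, after first verifying (as the paper does) that $Z$ has no interior. With that adjustment your argument becomes the paper's.
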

\begin{proof} By the previous discussion, it suffices to prove $\ker(L_h^*)=0$ which is equivalent to prove $\Omega=0$ where $\Omega$ is the $(1,1)$ tensor defined in \eqref{omg}.  By Lemma \ref{le7} and Lemma \ref{le8}, for $\alpha=1,2,3$, we have, on the whole sphere, $$\Omega\cdot E_\alpha=0,\text{ and }\Omega\cdot \nu=0.$$
Since for any three vector $a,b,c$, we always have $$(a\times b)\times c=b(a\cdot c)-a(b\cdot c).$$ Hence for $\alpha\neq \beta$ and $\alpha,\beta=1,2,3$, we have
\begin{eqnarray}
E_\alpha\times E_\beta\cdot \nu&=&(\frac{\p}{\p z^{\alpha}}\times X)\times (\frac{\p}{\p z^{\beta}}\times X)\cdot \nu\\
&=&(\frac{\p}{\p z^{\alpha}}\times\frac{\p}{\p z^{\beta}}\cdot X)(X\cdot \nu)\nonumber\\
&=&(\frac{\p}{\p z^{\alpha}}\times\frac{\p}{\p z^{\beta}}\cdot X)\varphi.\nonumber
\end{eqnarray}
For any vector $X\neq 0$, we always can find $\alpha_0\neq \beta_0$ such that $\dfrac{\p}{\p z^{\alpha_0}},\dfrac{\p}{\p z^{\beta_0}}$ and $X$ are linearly independent. Since function $\varphi$ is non zero on $\mathbb{S}^2\setminus(Z\cup m_0),$ we have $E_{\alpha_0},E_{\beta_0}$ and $\nu$ are linearly independent which implies, on $\mathbb{S}^2\setminus(Z\cup m_0),$$$\Omega=0.$$  Note that $Z\cup m_0$ is a closed set without interior points. Hence $\Omega$ is zero on the whole sphere. Thus we complete our proof.

\end{proof}
\section{The openness }
Using Theorem \ref{LE}, we can give the proof of the openness Theorem \ref{openness}  for the Weyl problem.
Assume that $\vec{r}$ is the isometric embedding of $(\mathbb{S}^2,g)$ in ambient space. Following the Nirenberg's approach, we need to find some vector field $\vec{y}$, such that $$d(\vec{r}+\vec{y})\cdot d(\vec{r}+\vec{y})=\tilde{g},$$ where $\tilde{g}$ is the perturbation of $g$. We rewrite the isometric system in local coordinate of sphere $\{x_1,x_2\}$ and ambient space $\{z^1, z^2, z^3\}$,
\begin{eqnarray}\label{4.1}
\sigma_{\alpha\beta}(\vec{r}+\vec{y})\frac{\partial(\vec{r}^{\alpha}+\vec{y}^{\alpha})}{\partial x^{i}}\frac{\partial(\vec{r}^{\beta}+\vec{y}^{\beta})}{\partial x^{j}}=\tilde{g}_{ij},
\end{eqnarray}
and
$$\sigma_{\alpha\beta}(\vec{r})\frac{\partial\vec{r}^{\alpha}}{\partial x^{i}}\frac{\partial\vec{r}^{\beta}}{\partial x^{j}}=g_{ij}.$$
By \eqref{4.1}, we have
\begin{eqnarray}
&&\tilde{g}_{ij}-g_{ij}\\
&=&\sigma_{\alpha\beta}(\vec{r})(\vec{r}^{\alpha}_{i}D_{j}\vec{y}^{\beta}+\vec{r}^{\beta}_{j}D_{i}\vec{y}^{\alpha})
-\sigma_{\alpha\beta}(\vec{r})(\vec{r}^{\alpha}_{i}\Gamma^{\beta}_{j\gamma}\vec{y}^{\gamma}+\vec{r}^{\beta}_{j}\Gamma^{\alpha}_{i\gamma}\vec{y}^{\gamma})\nonumber\\
&&+\sigma_{\alpha\beta}(\vec{r})\vec{y}^{\alpha}_{i}\vec{y}^{\beta}_{j}+(\sigma_{\alpha\beta}(\vec{r}+\vec{y})-\sigma_{\alpha\beta}(\vec{r}))
(\vec{r}^{\alpha}_{i}\vec{r}^{\beta}_{j}+\vec{r}^{\alpha}_{i}\vec{y}^{\beta}_{j}
+\vec{y}^{\alpha}_{i}\vec{r}^{\beta}_{j}+\vec{y}^{\alpha}_{i}\vec{y}^{\beta}_{j})\nonumber
\end{eqnarray}
where $D_{i}=D_{\frac{\partial}{\partial x^{i}}}, \vec{r}^{\alpha}_{i}=\frac{\partial \vec{r}^{\alpha}}{\partial x^{i}}$ and
$\Gamma^{\alpha}_{i\gamma}=\vec{r}^{\lambda}_{i}\Gamma^{\alpha}_{\lambda\gamma}$. By Taylor expansion theorem, we have
$$\sigma_{\alpha\beta}(\vec{r}+\vec{y})-\sigma_{\alpha\beta}(\vec{r})=\partial_{\gamma}\sigma_{\alpha\beta}(\vec{r})\vec{y}^{\gamma}
+\vec{y}^{\gamma}\vec{y}^{\lambda}\int_{0}^{1}(1-t)\partial^{2}_{\gamma\lambda}\sigma_{\alpha\beta}(\vec{r}+t\vec{y})dt.$$ By the definition of Christoffel symbol, we have $$\partial_{\gamma}\sigma_{\alpha\beta}(\vec{r})\vec{r}^{\alpha}_{i}\vec{r}^{\beta}_{j}
=\sigma_{\alpha\beta}(\vec{r})(\vec{r}^{\alpha}_{i}\Gamma^{\beta}_{j\gamma}+\vec{r}^{\beta}_{j}\Gamma^{\alpha}_{i\gamma}).$$
Let
\begin{eqnarray}
F_{\alpha\beta\gamma\lambda}(\vec{r},\vec{y})&=&\int_{0}^{1}(1-t)\partial^{2}_{\gamma\lambda}\sigma_{\alpha\beta}(\vec{r}+t\vec{y})dt,\\
G_{\alpha\beta\gamma}(\vec{r},\vec{y})&=&\int_{0}^{1}\partial_{\gamma}\sigma_{\alpha\beta}(\vec{r}+t\vec{y})dt.\nonumber
\end{eqnarray}
Then we rewrite $q=q_{ij}dx^idx^i$ and $q_{ij}$ is
\begin{eqnarray}\label{4.4}
q_{ij}(\vec{y})&=&\tilde{g}_{ij}-g_{ij}-\sigma_{\alpha\beta}(\vec{r})\vec{y}^{\alpha}_{i}\vec{y}^{\beta}_{j}
-F_{\alpha\beta\gamma\lambda}(\vec{r},\vec{y})\vec{r}^{\alpha}_{i}\vec{r}^{\beta}_{j}\vec{y}^{\gamma}\vec{y}^{\lambda}\\
&&-G_{\alpha\beta\gamma}(\vec{r},\vec{y})\vec{y}^{\gamma}(\vec{r}^{\alpha}_{i}\vec{y}^{\beta}_{j}
+\vec{y}^{\alpha}_{i}\vec{r}^{\beta}_{j}+\vec{y}^{\alpha}_{i}\vec{y}^{\beta}_{j})\nonumber.
\end{eqnarray}
We conclude that \eqref{4.1} is equivalent to the following inhomogeneous infinitesimal-deformation type equations
\begin{eqnarray}\label{4.5}
d\vec{r}\cdot D \vec{ y}= q(\vec{y},\nabla \vec{y}).
\end{eqnarray}
The above calculation has been exhibited in \cite{LW} at first. Note that
$$\|F_{\alpha\beta\gamma\lambda}\|_{C^{m,\alpha}(\mathbb{S}^2)}+\|G_{\alpha\beta\gamma}\|_{C^{m,\alpha}(\mathbb{S}^2)}\leq C_{m,\alpha}\|\vec{y}\|_{C^{m,\alpha}(\mathbb{S}^2)},$$ where $\|\cdot\|_{C^{m,\alpha}(\mathbb{S}^2)}$ is the H\"older norm on $2$ sphere.

Theorem \ref{LE} tells us that, for any given symmetric tensor $q$, system \eqref{4.5}
always can be solved. In what follows, we will make use of contraction mapping theorem to obtain some solution to \eqref{4.5}.
We define some map $\Phi$,
\begin{eqnarray}
\Phi: C^{2,\alpha}(\mathbb{S}^2)&\mapsto &C^{2,\alpha}(\mathbb{S}^2)\nonumber,\\
\vec{z}&\mapsto &\vec{y}\nonumber,
\end{eqnarray}
where $\vec{y}=\Phi(\vec{z})$ is the solution to \eqref{4.5} with
$q=q(\vec{z},\nabla\vec{z})$. We need a priori estimate for the solution to \eqref{4.5}.

Similar to the previous section,  an  elliptic system \eqref{3.3} can be derived from \eqref{4.5} for
$$\omega=\vec{y}\cdot d\vec{r}=u_{i}dx^{i}.$$ We need its H\"older estimate.

Takeing derivatives of system \eqref{L_h}, we obtain some second order elliptic partial differential system. Using the interior Schauder estimate of elliptic system \cite{GM}, we have
\begin{eqnarray}
\|\omega\|_{C^{2,\alpha}(\mathbb{S}^2)}\leq C_{2,\alpha}(\|q\|_{C^{1,\alpha}(\mathbb{S}^2)}+\|\omega\|_{C^{0,\alpha}(\mathbb{S}^2)}).\nonumber
\end{eqnarray}
A compactness argument implies, for $\omega$ orthogonal to the kernel of the system \eqref{3.3},
\begin{eqnarray}\label{4.6}
\|u_1\|_{C^{2,\alpha}(\mathbb{S}^2)}+\|u_2\|_{C^{2,\alpha}(\mathbb{S}^2)}\leq C_{2,\alpha}\|q\|_{C^{1,\alpha}(\mathbb{S}^2)}.
\end{eqnarray}
Here, the constant $C_{2,\alpha}$ only depends on the upper and lower bound of the principal curvatures $\kappa_1,\kappa_2$ of the strictly convex surface $\vec{r}$ and the upper bound of the derivatives of the curvature.
See also Vekua \cite{V} for another proof of the above inequality.
In what follows, we will modify Nirenberg's trick \cite{N1} to improve the regularity of the function $\phi=\vec{y}\cdot \nu$ where $\nu$ is the unit outer normal of the surface $\vec{r}$. Let
\begin{eqnarray}\label{4.7}
w=\frac{u_{1,2}-u_{2,1}}{2\sqrt{|g|}},
\end{eqnarray}
where $|g|=\det(g)$. Since $d\omega=2w\sqrt{|g|}dx^{1}\wedge dx^{2}$,  $w$ is a globally defined function
on $\mathbb{S}^2$.  Moreover, $w$ satisfies
\begin{eqnarray}\label{4.8}
w\sqrt{|g|}+q_{12} &=&u_{1,2}-h_{12}\phi,\\  q_{21}-w\sqrt{|g|} &=&u_{2,1}-h_{12}\phi.\nonumber
\end{eqnarray}
By Ricci identities, we have
\begin{eqnarray}\label{4.9}
R^{l}_{121}u_{l}&=&u_{1,21}-u_{1,12}\\
&=&(q_{12}+w\sqrt{|g|}+h_{12}\phi)_{,1}-(q_{11}+h_{11}\phi)_{,2} \nonumber\\
R^{k}_{212}u_{k}&=&u_{2,12}-u_{2,21}\nonumber\\
&=&(q_{21}-w\sqrt{|g|}+h_{21}\phi)_{,2}-(q_{22}+h_{22}\phi)_{,1}\nonumber.
\end{eqnarray}
Hence we have
\begin{eqnarray}
-w_1=
\frac{1}{\sqrt{|g|}}(q_{12,1}-q_{11,2}+(h_{21}\phi)_{,1}-(h_{11}\phi)_{,2}-R^{l}_{121}u_{l}),\nonumber
\end{eqnarray}
\begin{eqnarray}
-w_2=\frac{1}{\sqrt{|g|}}(q_{22,1}-q_{12,2}+(h_{22}\phi)_{,1}-(h_{21}\phi)_{,2}+R^{k}_{212}u_{k}).\nonumber
\end{eqnarray}
Using compatibility condition $w_{1,2}=w_{2,1}$, we obtain
\begin{eqnarray}\label{4.10}
&&\frac{1}{\sqrt{|g|}}((h_{22}\phi)_{,11}-(h_{21}\phi)_{,21}-(h_{21}\phi)_{,12}+(h_{11}\phi)_{,22})\\
&=&\frac{1}{\sqrt{|g|}}(-(R^{l}_{121}u_{l})_{,2}-(R^{k}_{212}u_{k})_{,1}+q_{12,12}-q_{11,22}-q_{22,11}+q_{12,21}).\nonumber
\end{eqnarray}
It is not difficult to see that the above equation is an elliptic equation, if $h_{ij}$ is positive definite.
\begin{rema}
In fact, the above equation \eqref{4.10} is the linearized Gauss equation. Since the tensor $q$ is the linearization of the metric tensor, the curvature type tensor of $q$ appears in the right hand side of \eqref{4.10}
\end{rema}

Using \eqref{4.6}, \eqref{4.10} and H\"older estimates of solutions to elliptic equation,
we have the following lemma,
\begin{lemm}\label{le12}
For  $0<\alpha<1$ and any given symmetric tensor $q$, suppose $\vec{y}$ is any solution to \eqref{4.5}. Then, we have
the following estimate
\begin{eqnarray}
\|\vec{y}\|_{C^{2,\alpha}(\mathbb{S}^2)}\leq C(\|q\|_{C^{1,\alpha}(\mathbb{S}^2)}+\|q_{12,12}-q_{11,22}-q_{22,11}+q_{12,21}\|_{C^{\alpha}(\mathbb{S}^2)}),\nonumber
\end{eqnarray}
where $C$ is constant depending  on $\alpha$ and $\Sigma$.
\end{lemm}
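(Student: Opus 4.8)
The plan is to split $\vec y$ into its tangential and normal parts along the fixed strictly convex surface $\Sigma=\vec r(\mathbb S^2)$ and to estimate each, using the two ingredients already assembled above: the a priori bound \eqref{4.6} for the first order elliptic system satisfied by $\omega=\vec y\cdot d\vec r$, and the linearized Gauss equation \eqref{4.10} for $\phi=\vec y\cdot\nu$. Writing $u_i=\vec y\cdot\vec r_i$ and $\phi=\vec y\cdot\nu$ we have $\vec y=g^{ij}u_i\vec r_j+\phi\,\nu$, and since $\vec r$, $\nu$ and $g^{ij}$ are fixed smooth data of $\Sigma$ it suffices to bound $\|u_1\|_{C^{2,\alpha}}+\|u_2\|_{C^{2,\alpha}}+\|\phi\|_{C^{2,\alpha}}$ by the right hand side of the lemma. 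First I would invoke \eqref{4.6}, applied to the component of $\omega$ orthogonal to the (finite dimensional) kernel of the system \eqref{3.3}; for a general solution $\vec y$ one first subtracts its kernel part, which does not change $q$ and is the precise sense in which the estimate holds for $\vec y$. This gives $\|u_1\|_{C^{2,\alpha}}+\|u_2\|_{C^{2,\alpha}}\le C\|q\|_{C^{1,\alpha}}$ with $C=C(\alpha,\Sigma)$, and in addition, via the algebraic identity $\phi=\frac12\mathrm{tr}_h(\nabla\omega-q)$, the cheap bound $\|\phi\|_{C^{1,\alpha}}\le C(\|\omega\|_{C^{2,\alpha}}+\|q\|_{C^{1,\alpha}})\le C\|q\|_{C^{1,\alpha}}$.

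It then remains to upgrade $\phi$ from $C^{1,\alpha}$ to $C^{2,\alpha}$, which is where the second order combination of $q$ enters. The equation \eqref{4.10} has principal part $h_{22}\phi_{,11}-2h_{12}\phi_{,12}+h_{11}\phi_{,22}$, whose symbol $h_{22}\xi_1^2-2h_{12}\xi_1\xi_2+h_{11}\xi_2^2$ is positive definite because $h_{ij}$ is positive definite by strict convexity of $\Sigma$; hence \eqref{4.10} is a uniformly elliptic second order equation for $\phi$ on the closed surface $\mathbb S^2$, with smooth coefficients depending only on $\Sigma$. Its right hand side is $\frac1{\sqrt{|g|}}\big(-(R^l_{121}u_l)_{,2}-(R^k_{212}u_k)_{,1}+q_{12,12}-q_{11,22}-q_{22,11}+q_{12,21}\big)$: the curvature terms involve only first derivatives of $u_l$, so by the first paragraph their $C^\alpha$ norm is at most $C\|u\|_{C^{1,\alpha}}\le C\|q\|_{C^{1,\alpha}}$, while the remaining terms form precisely the combination occurring in the statement. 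Therefore the $C^\alpha$ norm of the right hand side of \eqref{4.10} is bounded by $C\big(\|q\|_{C^{1,\alpha}}+\|q_{12,12}-q_{11,22}-q_{22,11}+q_{12,21}\|_{C^\alpha}\big)$. The global Schauder estimate on $\mathbb S^2$ applied to \eqref{4.10} now yields $\|\phi\|_{C^{2,\alpha}}\le C\big(\|\text{right hand side}\|_{C^\alpha}+\|\phi\|_{C^0}\big)$, and the lower order term is absorbed using $\|\phi\|_{C^0}\le\|\phi\|_{C^{1,\alpha}}\le C\|q\|_{C^{1,\alpha}}$ from the first paragraph, rather than by any uniqueness statement for \eqref{4.10} (which one does not have in general, the linearized deformation problem carrying a nontrivial kernel).

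Assembling the bounds for $u_1,u_2$ and $\phi$ through $\vec y=g^{ij}u_i\vec r_j+\phi\,\nu$ gives the asserted estimate. Since the derivation of \eqref{4.10}, the check of its ellipticity, and the Ricci identity manipulation (exchanging $q_{12,12}$ and $q_{12,21}$ up to curvature terms, which is why only the symmetric combination survives on the right) are already carried out in the excerpt, the genuine work left is essentially the regularity bookkeeping: confirming that the $C^{2,\alpha}$ control of $u_l$ coming from \eqref{4.6} is exactly what makes the right hand side of \eqref{4.10} lie in $C^\alpha$, and that the $C^0$ term is absorbed by the algebraic bound on $\phi$. I expect the one real subtlety to be the kernel: \eqref{4.6} holds only for $\omega$ orthogonal to the kernel of \eqref{3.3}, so ``any solution $\vec y$'' must be understood as the solution whose $\omega$ is orthogonal to that kernel -- equivalently, the estimate is for $\vec y$ modulo the kernel -- and one should check that reconstructing $\vec y$ from $(\omega,\phi)$ on the closed sphere does control $\vec y$ itself in $C^{2,\alpha}$ without losing a derivative.
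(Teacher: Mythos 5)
Your argument is correct and follows the paper's own route: the paper proves this lemma precisely by combining the Schauder bound \eqref{4.6} for $u_1,u_2$ with the second order elliptic equation \eqref{4.10} for $\phi$ and standard H\"older estimates, exactly as you do, and your reconstruction of $\vec y=g^{ij}u_i\vec r_j+\phi\,\nu$ fills in the bookkeeping the paper leaves implicit. Your closing caveat about the kernel is also the right reading of ``any solution'': the estimate is meant for the solution normalized to be orthogonal to the kernel of \eqref{3.3}, which is how the paper later uses it in defining the map $\Phi$.
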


For any vector field $\vec{z}$ on the embedded surface $\vec{r}$, we take $q=q(\vec{z},\nabla \vec{z})$ defined by \eqref{4.4}. Since the term $q_{12,12}-q_{11,22}-q_{22,11}+q_{12,21}$ is the linearized Gauss curvature, it does not include any third order derivatives of $\vec{z}$.  By \eqref{4.4}, in every term of $q_{ij}$ except the term $\tilde{g}_{ij}-g_{ij}$, the vector $\vec{z}$ appears
at least two times. Thus by the definition of map $\Phi$, Lemma \ref{le12} implies
\begin{eqnarray}\label{estimate}
&&\|\Phi(\vec{z})\|_{C^{2,\alpha}(\mathbb{S}^2)}\\
&\leq& C_1(\|g_{ij}-\tilde{g}_{ij}\|_{C^{2,\alpha}(\mathbb{S}^2)}+\|\vec{z}\|_{C^{2,\alpha}(\mathbb{S}^2)}^{2}+\|\vec{z}\|_{C^{2,\alpha}(\mathbb{S}^2)}^{3}
+\|\vec{z}\|_{C^{2,\alpha}(\mathbb{S}^2)}^{4}).\nonumber
\end{eqnarray}

For any number $\delta<1,$ let $$B_{\delta}=\{\vec{z} \text { some vector field on the surface } \vec{r}; \|\vec{z}\|_{C^{2,\alpha}(\mathbb{S}^2)}\leq \delta\}.$$
If we take sufficient small $\epsilon$ and $\delta$,  the map $\Phi$ is well defined in $B_{\nu}$ by \eqref{estimate}.
We want to prove that the map $\Phi$ is a contraction map. Suppose $\vec{z}_1,\vec{z}_2 \in B_{\delta}$, and
$$q=q(\vec{z}_1,\nabla \vec{z}_1)-q(\vec{z}_2,\nabla \vec{z}_2).$$
By \eqref{4.4} we have

\begin{eqnarray}
&&q_{ij}\nonumber\\
&=&-\sigma_{\alpha\beta}(\vec{r})[((\vec{z}_1)^{\alpha}_i-(\vec{z}_2)^{\alpha}_i)(\vec{z}_1)^{\beta}_j+(\vec{z}_2)^{\alpha}_i((\vec{z}_1)^{\beta}_j-(\vec{z}_2)^{\beta}_j)]\nonumber\\
&&-[(\vec{z}_1-\vec{z}_2)^{\mu}(\int_{0}^{1}\frac{\partial F_{\alpha\beta\gamma\lambda}}{\partial \vec{y}^\mu}(t\vec{z}_1+(1-t)\vec{z}_2)dt)\vec{z}_1^{\gamma}\vec{z}_1^\lambda\nonumber\\
&&+F_{\alpha\beta\gamma\lambda}(\vec{z}_2)(\vec{z}_1+\vec{z}_2)^{\gamma}(\vec{z}_1-\vec{z}_2)^{\lambda}]\vec{r}^{\alpha}_{i}\vec{r}^{\beta}_{j}\nonumber\\
&&-[(\vec{z}_1-\vec{z}_2)^{\mu}(\int_{0}^{1}\frac{\partial G_{\alpha\beta\gamma}}{\partial \vec{y}^\mu}(t\vec{z}_1+(1-t)\vec{z}_2) dt)\vec{z}_1^{\gamma}
(\vec{r}_i^{\alpha}(\vec{z}_1)_j^{\beta}+\vec{r}_j^{\beta}(\vec{z}_1)_i^{\alpha}+(\vec{z}_1)_i^{\alpha}(\vec{z}_1)_j^{\beta})\nonumber\\
&&+G_{\alpha\beta\gamma}(\vec{z}_2)(\vec{z}_1-\vec{z}_2)^{\gamma}(\vec{r}_i^{\alpha}(\vec{z}_1)_j^{\beta}+\vec{r}_j^{\beta}(\vec{z}_1)_i^{\alpha}+(\vec{z}_1)_i^{\alpha}(\vec{z}_1)_j^{\beta})\nonumber\\
&&+G_{\alpha\beta\gamma}(\vec{z}_2)\vec{z}_2^{\gamma}
(\vec{r}_i^{\alpha}(\vec{z}_1-\vec{z}_2)_j^{\beta}+\vec{r}_j^{\beta}(\vec{z}_1-\vec{z}_2)_i^{\alpha}+(\vec{z}_1-\vec{z}_2)_i^{\alpha}(\vec{z}_1)_j^{\beta}+(\vec{z}_2)^{\alpha}_i(\vec{z}_1-\vec{z}_2)_j^{\beta}).
\nonumber
\end{eqnarray}
Hence by Lemma \ref{le12}, we have
$$\|\Phi(\vec{z}_1)-\Phi(\vec{z}_2)\|_{C^{2,\alpha}(\mathbb{S}^2)}\leq C\delta\|\vec{z}_1-\vec{z}_2\|_{C^{2,\alpha}(\mathbb{S}^2)}.$$
If we choose sufficient small $\delta$, then map $\Phi$ is a contraction map which implies that there exists one solution to \eqref{4.5}. We completes the proof of Theorem \ref{openness}.

A direct corollary of the openness theorem is the following openness in conformal flat space.
\begin{coro} Suppose $ds^2$ is the symmetric conformal flat metric in $\mathbb{R}^3$. Namely, there is some positive function $\phi(r)$ satisfying $(r\phi(r))'>0$, such that
$$ds^2=\phi^2(r)(dr^2+r^2dS_2^2).$$ Then for some metric $g$ on $2$ sphere, if it can be isometrically embedded into  $(\mathbb{R}^3,ds^2)$ as some strictly convex surface, then the perturbation metric $\tilde{g}$ of $g$ also can be isometrically embedded into the conformal space $(\mathbb{R}^3,ds^2)$ as another strictly convex surface.
\end{coro}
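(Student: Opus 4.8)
The plan is to observe that the rotationally symmetric conformal flat metric in the statement is, after a change of the radial coordinate, a three dimensional warped product metric of the form \eqref{metric}, so that the conclusion follows at once from Theorem \ref{openness}.

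First I would introduce the new radial variable $t=r\phi(r)$. The hypothesis $(r\phi(r))'>0$ together with $\phi>0$ guarantees that $r\mapsto t$ is a smooth strictly increasing diffeomorphism onto its image; denote its inverse by $r=r(t)$, which is smooth and satisfies $r(0)=0$. Under this substitution the spherical part of the metric transforms as $\phi^2(r)\,r^2\,dS_2^2=t^2\,dS_2^2$, matching the $t^2\,dS_2^2$ term in \eqref{metric} with $n=3$.

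Next I would treat the radial part. Since $dt=(r\phi(r))'\,dr$, we have
$$\phi^2(r)\,dr^2=\frac{\phi^2(r)}{\big((r\phi(r))'\big)^2}\,dt^2,$$
so if we set
$$f(t)=\left.\frac{(r\phi(r))'}{\phi(r)}\right|_{r=r(t)},$$
which is a smooth positive function of $t$ by the hypotheses, the metric becomes $ds^2=\frac{1}{f^2(t)}\,dt^2+t^2\,dS_2^2$. Thus $(\mathbb{R}^3,ds^2)$ is exactly the three dimensional warped product space with warped function $f$. (One checks incidentally that $f(0)=1$, so there is no singularity at the origin, although this is not needed here.)

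Finally, since isometric embedding and strict convexity of the image surface are notions independent of the coordinate presentation of the ambient metric, the hypothesis that $(\mathbb{S}^2,g)$ embeds into $(\mathbb{R}^3,ds^2)$ as a strictly convex surface is precisely the hypothesis of Theorem \ref{openness} for the warped product space just identified. That theorem then yields, for each $\alpha\in(0,1)$, an $\epsilon>0$ depending only on $g$ and $\alpha$ so that every $\tilde g$ with $\|g-\tilde g\|_{C^{2,\alpha}(\mathbb{S}^2)}<\epsilon$ also embeds as a closed strictly convex surface, which is the assertion of the corollary. The only step requiring care --- and the only real obstacle --- is verifying that $t=r\phi(r)$ is a legitimate change of variable with smooth positive $f$; both points are immediate consequences of $\phi>0$ and $(r\phi(r))'>0$.
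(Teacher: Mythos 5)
Your proposal is correct and follows essentially the same route as the paper: the paper's proof simply states that the metric can be rewritten as the warped product metric \eqref{metric} and then invokes Theorem \ref{openness}, which is exactly the change of variables $t=r\phi(r)$ with $f=(r\phi)'/\phi$ that you carry out explicitly.
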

\begin{proof}
The metric can be rewrite to the warped product metric \eqref{metric}. By openness theorem, we obtain our result.
\end{proof}
\section{Discussion of the existence results}
Using the openness theorem, we can prove some existence theorem for AdS space. The metric of the AdS space is defined by \eqref{AdS}, so it is a asymptotic Euclidean or hyperbolic space.
\begin{theo}
For any metric $g$ on $\mathbb{S}^2$, if its Gaussian curvature $K>-\kappa$, then it can be isometrically embedded into AdS space  with $\kappa$ and any $m$.  More precisely, $( \mathbb{S}^2,g)$ always can be isometrically embedded as a surface outside some large ball $B_R$ in AdS space.
\end{theo}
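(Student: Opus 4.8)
The plan is to deduce the theorem from Theorem~\ref{openness} together with the classical solution of the Weyl problem in space forms. Throughout fix $\kappa\ge 0$ and $m>0$, write $f(r)=\sqrt{1-m/r+\kappa r^2}$ for the AdS warped function, and let $f_0(r)=\sqrt{1+\kappa r^2}$ be the warped function of the space form $N_{-\kappa}$ of constant sectional curvature $-\kappa$ (namely $\mathbb{R}^3$ when $\kappa=0$ and hyperbolic space $\mathbb{H}^3_{-\kappa}$ when $\kappa>0$). The first step is to embed $(\mathbb{S}^2,g)$ into the space form: since $K>-\kappa$, the metric $g$ admits an isometric embedding into $N_{-\kappa}$ as a closed strictly convex surface $\Sigma_0$, by Nirenberg \cite{N1} when $\kappa=0$ and by Pogorelov \cite{P2} when $\kappa>0$.

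Next I would push $\Sigma_0$ out to spatial infinity. Because $N_{-\kappa}$ is homogeneous, there is an ambient isometry carrying $\Sigma_0$ into the region $\{r>R\}$ for any prescribed $R$; call its image $\Sigma_0^R$, and observe that the intrinsic metric and second fundamental form of $\Sigma_0^R$ inside $N_{-\kappa}$ are isometry–invariant, hence independent of $R$. On $\{r>R\}$ the two warped functions satisfy $f^2-f_0^2=-m/r$, so $1/f^2-1/f_0^2=O(1/R)$ together with the analogous bounds for derivatives; consequently the metric $g_R$ induced on $\Sigma_0^R$ by the AdS metric obeys $\|g_R-g\|_{C^{2,\alpha}(\mathbb{S}^2)}=O(1/R)$, and the principal curvatures of $\Sigma_0^R$ computed in the AdS metric are $O(1/R)$-close to those of $\Sigma_0$. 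In particular, once $R$ is large, $\Sigma_0^R$ is a closed strictly convex surface in AdS space whose AdS-induced metric is $C^{2,\alpha}$-close to $g$.

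The last step is to apply Theorem~\ref{openness} with base surface $\Sigma_0^R$. Inspecting its proof, the constant $\epsilon$ there depends only on upper and lower bounds for the principal curvatures of the base surface and on bounds for the covariant derivatives of its curvature — this is exactly the dependence recorded for $C_{2,\alpha}$ in \eqref{4.6} and propagated through Lemma~\ref{le12} and the contraction-mapping argument — and for $\Sigma_0^R$ all of these quantities converge, as $R\to\infty$, to the corresponding quantities of the fixed surface $\Sigma_0\subset N_{-\kappa}$. Hence $\epsilon(g_R,\alpha)\ge\epsilon_0>0$ uniformly in $R$. Choosing $R$ so large that $\|g_R-g\|_{C^{2,\alpha}(\mathbb{S}^2)}<\epsilon_0$, Theorem~\ref{openness} produces an isometric embedding of $(\mathbb{S}^2,g)$ into AdS space as a closed strictly convex surface $C^{2,\alpha}$-close to $\Sigma_0^R$, hence contained in $\{r>R/2\}$, i.e. outside $B_{R/2}$. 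Since $R$ is arbitrary this proves the theorem, and in fact realizes $(\mathbb{S}^2,g)$ outside any prescribed ball.

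The hard part is precisely the uniformity in the last step: one must verify that the openness estimate of Theorem~\ref{openness} does not degenerate as the base surface recedes to infinity in AdS space, which amounts to tracking the constants in \eqref{4.6} and in Lemma~\ref{le12} through the $C^\infty$ convergence of the AdS metric to the space-form metric on $\{r>R\}$. An alternative, more self-contained route avoids \cite{N1,P2} and runs the continuity method inside AdS space: join $g$ to a large round metric $R^2\,dS^2$ by normalized Ricci flow followed by a rescaling (along which the Gauss curvature stays above its initial minimum, hence above $-\kappa$), use that $R^2\,dS^2$ is realized by the slice sphere $\{r=R\}$, and close the argument with Theorem~\ref{openness} for openness and with the a priori estimates of Lu \cite{Lu} and the closeness theorem of Guan-Lu \cite{GL1} for closedness; there the delicate point is to keep the deforming surfaces bounded away from the horizon $\{f=0\}$ and from spatial infinity.
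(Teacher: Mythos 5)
Your main argument is essentially the same as the paper's: embed $(\mathbb{S}^2,g)$ in the space form $N_{-\kappa}$ via Nirenberg/Pogorelov, use a space-form isometry to translate the embedded surface outside $B_R$, observe that the AdS-induced metric and second fundamental form on this surface differ from the space-form ones by $O(1/R)$, verify that the constant $\epsilon$ in Theorem~\ref{openness} stays bounded below as $R\to\infty$, and then invoke Theorem~\ref{openness}. The paper records the same $C/R$ bounds on $g-g_R$, on the normals, and on the connections, and draws the same conclusion via the contraction map; your write-up is the same proof with the uniformity-of-$\epsilon$ step made a little more explicit (and your curvature-sign bookkeeping for the comparison space form is actually more careful than the paper's wording around \eqref{spaceform}).
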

\begin{proof}
Using the isometric embedding theorems in space form, we know that, the metric $g$ can be embedded into space form with the sectional curvature $\kappa$. We translate $\vec{r}$ to out of some sufficient large ball $B_R$ where $\vec{r}$ is the position vector of the embedded surface. Let  $g_R$ be the induced metric on $\vec{r},$ we have
$$\|g-g_R\|_{C^{3,\alpha}(\mathbb{S}^2)}\leq \frac{C}{R},$$ where $C$ is some constant not depending on $R$.
In what follows we will prove the second fundamental form of the surfaces $\vec{r}$ outside $B_R$ are uniformly bounded. Denote
 the second fundamental form of $\vec{r}$ in space form and AdS space outside $B_R$ by $h,h_R$ respectively. We also denote the unit normal and Levi-Civita connection with respect to space form and AdS space by $\nu^{\kappa},\nu$ and $\nabla^{\kappa},\nabla$. Meanwhile we assume the local coordinate of sphere $\{ x_1,x_2\},$ then
it is not very difficult to check that $$\|\nu-\nu^{\kappa}\|_{C^{2,\alpha}(\mathbb{S}^2)}\leq \frac{C}{R},\text{ and } \|\nabla^{\kappa}_i\vec{r}_j-\nabla_i \vec{r}_j\|_{C^{2,\alpha}(\mathbb{S}^2)}\leq \frac{C}{R}.$$ Hence we have
$$\| h-h_R\|_{C^{2,\alpha}(\mathbb{S}^2)}\leq \frac{C}{R},$$ where $C$ is still some constant independent of $R$.
Thus, the constant appears in the proof of the openness theorem \ref{openness} is uniformly  bounded. If we choose $R$ sufficient large, the map $\Phi$ still is a contraction map. Hence metric $g$ is in the $\epsilon$ neighborhood of some $g_R$ where $\epsilon$ is the constant appears in Theorem \ref{openness}. So by openness, there is some isometric embedding convex surface for metric $g$.
\end{proof}
For the sake of completeness, we rough review some existence results obtained by Guan- Lu \cite{GL1} and Lu \cite{Lu}.
In fact the above existence theorem is a corollary of the openness theorem . For further discussion about existence results, we need a priori estimates, i.e.  the estimate of the Gauss equation
$$\kappa_1\kappa_2=\frac{\bar{R}}{2}+K-\bar{Ric}(\nu,\nu).$$ where $K$ is the Gauss curvature. The following Heinz type $C^2$ a prior estimate has been found by Lu \cite{Lu}.
\begin{theo}\label{Lu}
Suppose $(\mathbb{S}^2,g)$ is isometrically embedded into some ambient Reimannian manifold $(N^3,\bar{g})$ such that it is null homologous. Let $X$ be some isometric embedding.
Assume that $$ R(x) - \bar{R}(X(x))+2 \inf\{Ric_{X(x)}(\mu, \mu):\mu \in T_{X(x)}U, |\mu| = 1\}\geq C_0>0,$$ for any $x\in \mathbb{S}^2$ where $C_0$ is a positive constant. Further assume that $\bar{R}\geq -6\kappa^2$ and $R>-2\kappa^2$ for some constant $\kappa$. Then the mean curvature $ H$ of the embedded surface is bounded from above,  $$H\leq C,$$ where  $C$  is a positive constant depending  on $C_0$, $\inf (R+2\kappa^2),\|g\|_{C^3}$ and $\|\bar{g}\|_{C^3}$, but not on the position of the embedded surface.
\end{theo}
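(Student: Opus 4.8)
The plan is to prove this as a \emph{global} second--order a priori estimate on the closed surface $\Sigma=X(\mathbb S^2)\subset N$, exploiting the conformal structure that $g$ puts on $\mathbb S^2$ together with the Gauss and Codazzi equations. This is the Heinz mechanism, and it is global in nature: a pointwise maximum principle applied to the largest principal curvature does not close here, because the Simons--type identity produces a term $\nabla^2 H$ of the same order as the quantity being estimated, which cannot be absorbed locally.

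First I would reduce matters to an $L^\infty$ bound on a Hopf--type quadratic differential. Combining the Gauss equation \eqref{2.6}--\eqref{2.7} with the hypothesis on $R-\bar R+2\inf Ric$, the two principal curvatures satisfy $\kappa_1\kappa_2\ge c_0:=\tfrac12 C_0>0$, and also $\kappa_1\kappa_2\le C_1$ with $C_1=C_1(\|g\|_{C^2},\|\bar g\|_{C^2})$; the extra hypotheses $\bar R\ge -6\kappa^2$ and $R>-2\kappa^2$ are exactly what keep this extrinsic curvature positive and permit comparison with the hyperbolic model $\mathbb H^3_{-\kappa^2}$. Since $\kappa_1\kappa_2$ is pinched between two positive constants, bounding $H=\kappa_1+\kappa_2$ above is equivalent to bounding $H^2-4\kappa_1\kappa_2=(\kappa_1-\kappa_2)^2$, and it automatically yields a positive lower bound for $\kappa_2$, so that $\Sigma$ is uniformly convex. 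Working in local isothermal coordinates $z=x+iy$ for $g$ (so $g=e^{2u}|dz|^2$), the trace--free part of the second fundamental form defines a globally well--defined section $Q\,dz^2$ of the square of the canonical bundle of $\mathbb S^2$, with $Q=\mathrm{II}(\partial_z,\partial_z)$ and $|Q|^2$ a fixed positive multiple of $e^{4u}(H^2-4\kappa_1\kappa_2)$ (here null--homology of $\Sigma$ guarantees a globally consistent choice of $\nu$ and hence of $\mathrm{II}$). It therefore suffices to prove $\sup_{\mathbb S^2}|Q|\le C$ with $C$ depending only on the stated data.

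Next I would set up a generalized--analytic (Vekua) equation for $Q$. The Codazzi equations in the isothermal frame read $\partial_{\bar z}Q=\tfrac12 e^{2u}\,\partial_z H+T$, where the ambient term obeys $|T|\le C(\|\bar g\|_{C^3})\,e^{2u}$. Using $H^2=4\kappa_1\kappa_2+4e^{-4u}|Q|^2$ to trade $\partial_z H$ for $\partial_z|Q|^2$ and $\partial_z(\kappa_1\kappa_2)$, one rewrites this as $\partial_{\bar z}Q=aQ+b\bar Q+F$ with $\|a\|_\infty+\|b\|_\infty\le C$ and an inhomogeneous term $F$ controlled by $\|g\|_{C^3}$, $\|\bar g\|_{C^3}$ and the lower bound $\kappa_1\kappa_2\ge c_0$ (the umbilic locus of $\Sigma$ being precisely the zero set of $Q$). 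Since $\mathbb S^2$ has genus zero, the square of its canonical bundle has negative degree, so it carries no nonzero holomorphic quadratic differential; the corresponding statement for the Vekua equation, obtained from the similarity principle and Carleman--type estimates for generalized analytic functions (in the spirit of Vekua \cite{V}), turns a global solution $Q$ of $\partial_{\bar z}Q=aQ+b\bar Q+F$ on $\mathbb S^2$ into one with $\|Q\|_{L^\infty}\le C\|F\|$ for a scale--invariant $C$. Feeding in the bounds on $F$ then gives $H\le C(C_0,\ \inf(R+2\kappa^2),\ \|g\|_{C^3},\ \|\bar g\|_{C^3})$, independent of the position of $\Sigma$ in $N$; the dependence on $\inf(R+2\kappa^2)$ and on $\kappa$ enters through the comparison with $\mathbb H^3_{-\kappa^2}$ that keeps the constant uniform when $\Sigma$ is ``large'' in $N$.

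The main obstacle is that the right--hand side of the Codazzi identity contains $\partial_z H$, a first derivative of the very quantity to be bounded, so the equation for $Q$ is not visibly of lower order. The resolution is structural rather than computational: on $\mathbb S^2$ the obstruction space is trivial, so the $\bar\partial$--equation for $Q$ comes with a priori estimates, and the self--referential $\partial_z H$ term is handled by first extracting an integral bound --- for instance $\int_{\mathbb S^2}|Q|\le C$ --- by pairing the Codazzi identity with holomorphic quadratic vector fields and integrating by parts over the closed surface, and then bootstrapping from $L^1$ to $L^\infty$ by the generalized--analytic--function machinery. Ensuring that every constant is invariant under rescaling of $N$, so that nothing depends on the position of $\Sigma$, is the remaining delicate point.
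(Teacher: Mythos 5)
The paper does not prove this statement; Theorem~\ref{Lu} is quoted as-is from Lu's preprint \cite{Lu} (``The following Heinz type $C^{2}$ a priori estimate has been found by Lu''), so there is no ``paper's own proof'' to compare against. Judged on its own, your sketch has two substantive problems.

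First, the reduction to a linear Vekua equation does not go through as written. After passing from $\partial_z H$ to $\partial_z\lvert Q\rvert^2$ via $H^2 = 4\kappa_1\kappa_2 + c\,e^{-4u}\lvert Q\rvert^2$, the Codazzi identity $\partial_{\bar z}Q=\tfrac12e^{2u}\partial_zH+T$ becomes
\[
\partial_{\bar z}Q \;=\; \frac{c\,e^{-2u}}{4H}\bigl(\bar Q\,\partial_z Q + Q\,\partial_z\bar Q\bigr) + (\text{terms in }Q,\ \partial_z\kappa_1\kappa_2,\ u_z,\ T),
\]
so the right-hand side still contains $\partial_z Q$ and $\partial_z\bar Q$. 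This is a genuinely nonlinear Beltrami--type system, not the linear $\partial_{\bar z}Q=aQ+b\bar Q+F$ with bounded $a,b$ that you invoke; the similarity principle and $L^1\to L^\infty$ bootstrap for scalar generalized analytic functions do not apply directly, and the ellipticity (one checks $c\,e^{-2u}\lvert Q\rvert/(4H)\to$ a constant $<1$ as $\lvert Q\rvert\to\infty$, so the system is indeed uniformly elliptic) would have to be exploited via quasiconformal/Beltrami machinery, which you do not supply. Your ``pairing with holomorphic quadratic vector fields'' step is also unworked; on $\mathbb{S}^2$ there is a $6$-dimensional space of holomorphic vector fields, but it is not clear what integral identity you intend or why it yields an $L^1$ bound on $Q$ uniform in the position of $\Sigma$.

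Second, the framing premise is off: you assert that a pointwise maximum principle on the largest principal curvature ``does not close'' because of the $\nabla^2H$ term in the Simons identity. That is not correct in this setting. For two-dimensional surfaces, the Gauss equation already controls $\kappa_1\kappa_2$ in terms of $R$, $\bar R$, and $\overline{\mathrm{Ric}}(\nu,\nu)$, so an upper bound on the largest principal curvature is equivalent to an upper bound on $H$; a maximum principle applied to $\log\lambda_{\max}$ (or $\log H$) augmented by a suitable zeroth-order test function absorbs the problematic Hessian terms, and this is precisely the mechanism behind Weyl's and Nirenberg's original mean-curvature estimates, Guan--Li \cite{GL2}, and the Guan--Lu/Lu curvature estimates \cite{GL1,Lu} that the present paper is citing. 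If you want to pursue the quadratic-differential route as a genuinely different proof, you would need to (i) set up and solve the nonlinear Beltrami system rather than a Vekua equation, (ii) explain where the $L^1$ bound comes from, and (iii) track scale invariance so the constant is independent of the position of $\Sigma$ --- none of these is accomplished by the current sketch.
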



Now, we have some existence result for  warped product space with no singularity. Assume that there is some universal constant $K_0$  such that \\
\begin{eqnarray}\label{assumption}
  \text{(a)} \lim\sup_{r\rightarrow \infty} \frac{1-f^2}{r^2}\leq K_0, \ \  \text{ (b)} \lim\sup_{r\rightarrow \infty}-\frac{1-f^2+2rff'}{r^2}\leq K_0.
 \end{eqnarray}
 With the above condition, there exists some constant $K_1$ depending on $f$ and $K_0$ such that
 \begin{eqnarray}\label{assumption1}
  \max\{\frac{1-f^2}{r^2}, -\frac{1-f^2+2rff'}{r^2}\}\leq K_1.
 \end{eqnarray}
 Hence we can brief review the proof of the Theorem \ref{theorem2} in \cite{GL1}, \cite{Lu} .\\

\noindent {\bf Proof of Theorem \ref{theorem2}:} We use continuity method to obtain the existence result. At first, we use normalized Ricci flow on $\mathbb{S}^2$ to create some path to connect the given metric $g$ and some standard metric with constant Gauss curvature  bigger than $K_0$. See \cite{H} for more detail.  The assumption (a) and (b) in \eqref{assumption} can preserve the strictly convex of the family of surface defined by the Ricci flow. The constant Gauss curvature surface always can be isometrically embedded in the warped product space as some small enough geodesic sphere if there is no singularity. Thus it remains to prove the openness and closeness. Openness has been obtained in Theorem \ref{openness}. For the $C^{2}$ estimates, we need to check the conditions of Theorem \ref{Lu}. Let $C_0=2(\min_{\mathbb{S}^2} K-K_1) ,$ and choose sufficient large $\kappa,$ we have $R=2K>-2\kappa^2.$  By \eqref{2.5} and \eqref{assumption}, we have
$$\bar{R}=2(\frac{2ff'}{r}+\frac{f^2-1}{r^2})\geq -6K_1.$$ Thus choosing sufficient large $\kappa$, we also have $\bar{R}>-6\kappa^2$. Therefore we obtain the $C^2$ a prior estimate independent of the position of the embedded surface.  The $C^1$ estimate directly comes from the isometric embedding system. It remians to establish $C^0$ estimate for completing the proof. The curvature estimate implies that the principal curvatures are bounded. Hence along the path, by definition of the variation $\tau$, we have
$$\vec{r}_t=\vec{r}_0+\int_0^t\tau dt. $$ Using the Schauder  estimate \eqref{4.6}, we have the uniform bound for $\tau$, which implies uniform bound for $\vec{r}_t$. We complete the proof of Theorem \ref{theorem2}.

\section{The infinitesimal rigidity in space form}

In this section, we will revisit the infinitesimal  rigidity of the isometric embedding problem for  convex hypersurfaces in space form.  At first, we reprove the infinitesimal  rigidity for convex surface in $\mathbb{R}^3$ only using the maximum principle. Then using Beltrami map, we can generalize to the space form cases.

As in section $3,$ the infinitesimal problem of the isometric embedding problem is to consider the following linear system: for any given convex surface $\vec{r}$,
\begin{eqnarray}\label{infi}
d\vec{r}\cdot d\tau=0.
\end{eqnarray}
Obviously, there is some vector $Y$ such that
\begin{eqnarray}\label{suppose}
d\tau=Y\times d\vec{r}.
\end{eqnarray}
 Differentiating  the above equation, we have
$$d^2\tau=dY\times dr=0,$$ which implies  $dY$ is parallel to the tangent plane. hence we can assume $$d\vec{r}=\vec{r}_idx^i, dY=Y_idx^i \text{ and } Y_i=W_{i}^{k}\vec{r}_k.$$ By $dY\times dr=0$ we have
\begin{eqnarray}
W_{i}^k\vec{r}_k\times \vec{r}_j dx^i\wedge dx^j=(W_1^k\vec{r}_k\times \vec{r}_2- W_2^k\vec{r}_k\times \vec{r}_1)dx^1\wedge dx^2=0,\nonumber
\end{eqnarray}
which implies
\begin{eqnarray}\label{sW}
W_1^1+W_2^2=0.
\end{eqnarray}
Now we use "," to denote covariant derivative. We have
\begin{eqnarray}\label{6.4}
Y_{i,j}=W_{i,j}^k\vec{r}_k+W_{i}^k\vec{r}_{k,j}=W_{i,j}^k\vec{r}_k+W_{i}^kh_{kj}\nu,
\end{eqnarray}
where $h_{ij}$ and $\nu$ are the second fundamental form and unit outer normal of the surface $\vec{r}$. Since $Y_{i,j}=Y_{j,i},$ by \eqref{6.4} we have the compatible equations
$$W_{i,j}^k=W_{j,i}^k \text{ and }  W_i^kh_{kj}=W_j^kh_{ki}.$$ The second equation is
$$-W_{1}^1h_{12}-W_{1}^2h_{22}+W_{2}^1h_{11}+W_{2}^2h_{21}=0.$$ Multiplied by $\det(h_{ij}),$ it becomes
\begin{eqnarray}\label{6.5}
W_{1}^1h^{21}-W_{1}^2h^{11}+W_{2}^1h^{22}-W_{2}^2h^{12}=0.
\end{eqnarray}
We introduce a new tensor $a=a_{ij}dx^idx^j$ defined by
\begin{eqnarray}\label{deofa}
a_{11}=-W_{1}^2, a_{12}=-W_{2}^2,a_{21}=W_{1}^1, a_{22}=W_{2}^1.
\end{eqnarray}
Then, by \eqref{sW} $a_{ij}$ is a symmetric matrix, and the above equation can be rewritten as
$$h^{ij}a_{ij}=0.$$
Taking covariant derivative, we have
\begin{eqnarray}
-h^{ij}_{,2}a_{ij}=h^{ij}W_{i,j}^1,  h^{ij}_{,1}a_{ij}=h^{ij}W_{i,j}^2.\nonumber
\end{eqnarray}
Hence we have
\begin{eqnarray}
h^{ij}Y_{i,j}=h^{ij}W_{i,j}^k\vec{r}_k+W_{i}^i\nu=-h^{ij}_{,2}a_{ij}\vec{r}_1+h^{ij}_{,1}a_{ij}\vec{r}_2.\nonumber
\end{eqnarray}
By the Lemma 4 in \cite{GWZ}, we have
$$a_{11}^2+a_{12}^2+a_{21}^2+a_{22}^2\leq -C\det(a_{ij}),$$ which means
$$(W_{1}^1)^2+(W_{1}^2)^2+(W_{2}^1)^2+(W_{2}^2)^2\leq -C\det(W_{ij}).$$
Hence, we conclude
$$h^{ij}Y_{i,j}=-h^{ij}_{,2}a_{ij}\frac{A_{1l}Y_l}{\det W}+h^{ij}_{,1}a_{ij}\frac{A_{2l}Y_l}{\det W},$$ where, $A_{ij}$ is the cofactor of $W_{ij}$. Now for any constant vector $E$ in $\mathbb{R}^3,$ taking inner product with $E$ in the previous equation, we obtain some elliptic partial differential equation of second order of $Y\cdot E$. Then the strong maximum principle tells us
$Y\cdot E$ is constant which  implies $Y$ is a constant vector.

Hence, we have the well known infinitesimal rigidity in Euclidean space.
\begin{prop}\label{15}
In three dimensional Euclidean space, the solutions to \eqref{infi} are
$$A\times \vec{r}+B,$$ where $A, B$ are two constant vectors.
\end{prop}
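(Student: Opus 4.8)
The statement to prove is Proposition~\ref{15}: in $\mathbb{R}^3$, every solution $\tau$ of the infinitesimal isometry equation $d\vec{r}\cdot d\tau=0$ has the form $A\times\vec{r}+B$ for constant vectors $A,B$.

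The plan is to assemble the pieces already developed in the preceding computation. First I would recall that from $d\vec{r}\cdot d\tau=0$ one writes $d\tau=Y\times d\vec{r}$ for some vector field $Y$; this is the classical passage to the ``rotation field''. Differentiating gives $dY\times d\vec{r}=0$, so $dY$ is tangential, $Y_i=W_i^k\vec{r}_k$, and the wedge relation forces $W_1^1+W_2^2=0$. The compatibility $Y_{i,j}=Y_{j,i}$ then splits into $W_{i,j}^k=W_{j,i}^k$ and $W_i^k h_{kj}=W_j^k h_{ki}$, and the latter, after multiplying by $\det(h_{ij})$, becomes the single scalar relation \eqref{6.5}. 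Repackaging the $W_i^k$ into the symmetric tensor $a_{ij}$ of \eqref{deofa} turns \eqref{6.5} into $h^{ij}a_{ij}=0$, so $a$ solves the homogeneous linearized Gauss--Codazzi type system. This is exactly the algebraic setup of Section~3 (and of \cite{GWZ}).

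Next I would invoke the pointwise inequality ``Lemma 4 in \cite{GWZ}'': since $h$ is positive definite and $h^{ij}a_{ij}=0$, one has $a_{11}^2+a_{12}^2+a_{21}^2+a_{22}^2\leq -C\det(a_{ij})$ pointwise, hence the same for the $W_i^k$: $\sum (W_i^k)^2\leq -C\det(W_{ij})$. In particular $\det W\neq 0$ wherever $a\not\equiv 0$ locally, so we may express $Y_l$ in terms of $h^{ij}Y_{i,j}$ via the cofactor matrix $A_{kl}$ of $W$. Combining $h^{ij}Y_{i,j}=-h^{ij}_{,2}a_{ij}\vec{r}_1+h^{ij}_{,1}a_{ij}\vec{r}_2$ with the identities $-h^{ij}_{,2}a_{ij}=h^{ij}W^1_{i,j}$ and $h^{ij}_{,1}a_{ij}=h^{ij}W^2_{i,j}$ yields, after taking the inner product with an arbitrary constant vector $E\in\mathbb{R}^3$, a second-order elliptic equation for the scalar function $Y\cdot E$ with no zeroth-order term. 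The strong maximum principle on the closed surface forces $Y\cdot E$ to be constant; since $E$ is arbitrary, $Y$ is a constant vector $A$. Then $d\tau=A\times d\vec{r}=d(A\times\vec{r})$, so $\tau=A\times\vec{r}+B$ for a constant $B$, which is the claim.

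The only genuinely delicate point is the step from ``$h^{ij}Y_{i,j}$ is a first-order linear expression in $Y$'' to ``$Y\cdot E$ satisfies an elliptic equation with no zeroth-order term, hence is constant'': one must make sure the coefficients, which involve $1/\det W$ and derivatives of $h^{ij}$, are genuinely bounded and smooth. This is precisely what the Guan--Wang--Zhou inequality guarantees — it prevents $\det W$ from vanishing, and shows that the degeneracy locus $\{a=0\}$ has empty interior, so the conclusion $Y\cdot E\equiv\text{const}$ obtained on the open dense set extends by continuity to all of $\mathbb{S}^2$. Everything else is the bookkeeping already carried out above, so the proof is essentially a one-line invocation of that machinery together with the strong maximum principle.
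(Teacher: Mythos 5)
Your proposal follows the paper's own argument step for step: the passage to the rotation field $Y$ via \eqref{suppose}, the trace-free matrix $W$ with \eqref{sW}, the compatibility relations and the symmetric tensor $a$ of \eqref{deofa} with $h^{ij}a_{ij}=0$, the appeal to Lemma 4 of \cite{GWZ}, the elliptic equation for $Y\cdot E$, and the strong maximum principle, after which $\tau=A\times\vec{r}+B$ follows exactly as in Proposition~\ref{15}. So the route is the same; the only point that needs correcting is your account of the one step you yourself single out as delicate.

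The inequality of Lemma 4 in \cite{GWZ} does \emph{not} ``prevent $\det W$ from vanishing.'' Since $\det W=\det a$ and $|a|^2\le -C\det a$, the determinant is strictly negative wherever $a\neq 0$ pointwise, but it vanishes precisely on the set $\{a=0\}$ (i.e.\ wherever $W=0$), and nothing in the inequality forces that set to have empty interior. What the inequality actually buys is boundedness of the first-order coefficients: the numerators $h^{ij}_{,2}a_{ij}A_{1l}$ and $h^{ij}_{,1}a_{ij}A_{2l}$ are quadratic in the entries of $W$ (an entry of $a$ times a cofactor of $W$), the denominator is $\det W$, and $|W|^2\le -C\det W$ makes each quotient bounded by a constant times the derivatives of $h^{ij}$, uniformly across $\{a=0\}$ (where the right-hand side of the equation vanishes anyway, so the coefficient may be taken bounded measurable there). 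Hence $Y\cdot E$ satisfies a single uniformly elliptic equation on all of $\mathbb{S}^2$, with bounded coefficients and no zeroth-order term, and the strong maximum principle gives constancy globally. In particular no ``prove it on the open dense set where $\det W\neq 0$ and extend by continuity'' step is needed; that patch would in any case be problematic, since $\{a\neq 0\}$ need not be connected, and constancy of $Y\cdot E$ on each component would not by itself produce one global constant. With this reading of the lemma, your argument coincides with the paper's proof.
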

\begin{rema} We also can give anther a slight different proof of the above proposition, in which we  establish some elliptic  equation for the vector,
\begin{eqnarray}
\vec{b}=\tau-Y\times\vec{r},\nonumber
\end{eqnarray}
where $\tau$ and $Y$ is defined by \eqref{infi} and \eqref{suppose}. Then, we also use strong maximum principle to obtain that $\vec{b}$ is a constant.
\end{rema}
\begin{proof} Obviously, by \eqref{suppose} we have
\begin{eqnarray}\label{6.7}
d\vec{b}=-d Y\times\vec{r}.
\end{eqnarray}
We still write
$$\vec{b}_k=-Y_k\times \vec{r}, \ \  Y_k=W_k^l\vec{r}_l.$$ Then we have
\begin{eqnarray}\label{6.8}
\vec{b}_i&=&-Y_i\times\vec{r}=-W_{i}^k\vec{r}_k\times\vec{r}\\
\vec{b}_{i.j}&=&-W_{i,j}^{k}\vec{r}_k\times\vec{r}-h_{kj}W_{i}^k\nu\times\vec{r}-W_{i}^k\vec{r}_k\times\vec{r}_j\nonumber.
\end{eqnarray}
Multiplied by $h^{ij}$ it becomes
\begin{eqnarray}
h^{ij}\vec{b}_{i.j}
&=&-h^{ij}W_{i,j}^{k}\vec{r}_k\times\vec{r}-W^{k}_{k}\nu\times\vec{r}-h^{ij}W_{i}^k\vec{r}_k\times\vec{r}_j\nonumber\\
&=&-h^{ij}W_{i,j}^{k}\vec{r}_k\times\vec{r}\nonumber,
\end{eqnarray}
where we have used \eqref{sW} and \eqref{6.5} in the last equality. Using the first equation of \eqref{6.8}, we obtain some elliptic equation of $\vec{b}$,
$$h^{ij}\vec{b}_{i,j}=-h^{ij}_{,2}a_{ij}\frac{A_{1l}\vec{b}_l}{\det W}+h^{ij}_{,1}a_{ij}\frac{A_{2l}\vec{b}_l}{\det W},$$ where $A_{ij}$ is the cofactor of $W_{ij},$ and $a_{ij}$ is defined in \eqref{deofa}. The strong maximum principle implies that $\vec{b}$ is a constant.

\end{proof}

 The paper \cite{LW}  discusses the infinitesimal rigidity problem of the convex surfaces in hyperbolic three space. In fact, we can use Beltrami map to turn the infinitesimal rigidity problem in  space forms to the one in Euclidean space.

 The de sitter space time is a hyperboloid $-x_0^2+x_1^2+\cdots+x_n^2=-1$ in Minkowski space $\mathbb{R}^{1,n}.$ For infinitesimal rigidity problem, there are a family of  $\vec{r}_t$ satisfying $$\vec{r}_t\cdot \vec{r}_t=-1,\text{ and } d\vec{r}_t\cdot d\vec{r}_t= g, $$
 where $g$ is a given metric  on the original manifold and $\cdot$ denotes the Minkowski inner product. Obviously, Minkowski inner product restricted in $\mathbb{R}^n$ is Euclidean inner product. Set $\tau=\frac{\p \vec{r}_t}{\p t}|_{t=0},$ we have $$\tau\cdot \vec{r}=0, \text{ and } d\vec{r}\cdot d\tau =0.$$
 Set
 $$\tau= (\frac{1}{x_0}\sum_iA_ix_i,A_1,\cdots,A_n)$$ and
 $$\bar{r}=(x_1,\cdots,x_n), A=(A_1,\cdots, A_n),$$  we have $$\vec{r}=(x_0, \bar{r}), \text{ and } \tau=(\frac{A\ast \bar{r}}{x_0}, A),$$ where we use $\ast$ to denote the inner product of the Euclidean space. Hence we obtain
 \begin{eqnarray}
 d\vec{r}\cdot d\tau&=& (dx_0, d\bar{r})\cdot (\frac{dA\ast \bar{r}+A\ast d\bar{r}}{x_0}-A\ast \bar{r} \frac{dx_0}{x_0^2}, dA)\nonumber\\
 &=&\frac{1}{x_0^2}(x_0^2d\bar{r}\ast dA-x_0dA\ast\bar{r}dx_0-x_0A\ast d\bar{r}dx_0+A\ast \bar{r}(dx_0)^2)\nonumber\\
 &=&x_0^2\frac{x_0d\bar{r}-\bar{r}dx_0}{x_0^2}\ast\frac{x_0dA-Adx_0}{x_0^2}\nonumber\\
 &=&x_0^2d\frac{A}{x_0}\ast d\frac{\bar{r}}{x_0}=0\nonumber.
 \end{eqnarray}
 Thus, we have
 \begin{eqnarray}\label{Es}
 d(\frac{\bar{r}}{x_0})\ast d(\frac{A}{x_0})=0.
 \end{eqnarray}
Now let's study the hypersurface $\tilde{r}=\bar{r}/x_0$. We believe the following Lemma is well known, but we do not find appropriate reference. Hence, we include a short proof here.
\begin{lemm}
The second fundamental form of the hypersurface $\tilde{r}$ is conformal to the second fundamental form of $\vec{r}$ in the normal direction in hyperbolic space and the conformal function is always positive or negative.
\end{lemm}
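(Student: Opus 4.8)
The plan is to prove the lemma by a direct computation in the hyperboloid model $\mathbb{H}^n=\{\vec r\cdot\vec r=-1\}\subset\mathbb{R}^{1,n}$, where $\cdot$ denotes the Minkowski product and $\ast$ the Euclidean product on the space part $\mathbb{R}^n$. Write $\vec r=(x_0,\bar r)$ with $x_0>0$ and let $\nu=(\nu_0,\bar\nu)$ be the unit normal of $\vec r$ in $\mathbb{H}^n$, so that $\nu\cdot\nu=1$, i.e. $|\bar\nu|^2=1+\nu_0^2>0$. Separating the time and space parts of $\nu\cdot\vec r=0$ and $\nu\cdot\vec r_i=0$ yields the two identities I will use throughout,
\[
\bar\nu\ast\bar r=\nu_0x_0,\qquad \bar\nu\ast\bar r_i=\nu_0(x_0)_i.
\]
Since $\mathbb{R}^{1,n}$ is flat, the Gauss formula for $\vec r\subset\mathbb{H}^n\subset\mathbb{R}^{1,n}$ reads $\partial_i\partial_j\vec r=\Gamma^k_{ij}\vec r_k+h_{ij}\nu+g_{ij}\vec r$; pairing with $\nu$ and using $\vec r_k\cdot\nu=\vec r\cdot\nu=0$ isolates the hyperbolic second fundamental form as $h_{ij}=\partial_i\partial_j\vec r\cdot\nu=-\nu_0(x_0)_{ij}+\bar\nu\ast\bar r_{ij}$.

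Next I would identify the Euclidean unit normal of $\tilde r=\bar r/x_0$. From $\tilde r_i=(x_0\bar r_i-\bar r(x_0)_i)/x_0^2$ and the two identities above one gets $\bar\nu\ast(x_0\bar r_i-\bar r(x_0)_i)=x_0\nu_0(x_0)_i-(x_0)_i\nu_0x_0=0$, so $\bar\nu\ast\tilde r_i=0$ for every $i$. As $\bar\nu\neq 0$, the field $\tilde\nu=\bar\nu/\sqrt{1+\nu_0^2}$ is a Euclidean unit normal along $\tilde r$; this also shows that $\tilde r$ is an immersed hypersurface at every point where $\bar\nu$ is transverse to the $\tilde r_i$, which is automatic when $\vec r$ is strictly convex.

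Finally I would compute $\tilde h_{ij}=\partial_i\partial_j\tilde r\ast\tilde\nu$. Writing $\tilde r_i=P_i/x_0^2$ with $P_i=x_0\bar r_i-\bar r(x_0)_i$, the term of $\partial_j\tilde r_i$ proportional to $P_i$ drops after pairing with $\bar\nu$ because $\bar\nu\ast P_i=0$; expanding $\partial_jP_i$ and pairing with $\bar\nu$, the terms $(x_0)_j(\bar r_i\ast\bar\nu)$ and $(x_0)_i(\bar r_j\ast\bar\nu)$ cancel by $\bar\nu\ast\bar r_i=\nu_0(x_0)_i$, so that
\[
\bar\nu\ast\partial_i\partial_j\tilde r=\frac{1}{x_0}\bigl(\bar\nu\ast\bar r_{ij}-\nu_0(x_0)_{ij}\bigr)=\frac{1}{x_0}h_{ij},\qquad\text{hence}\qquad \tilde h_{ij}=\frac{h_{ij}}{x_0\sqrt{1+\nu_0^2}}.
\]
Thus the two second fundamental forms are pointwise proportional, and since $x_0>0$ on the upper sheet and $\sqrt{1+\nu_0^2}>0$, the conformal factor $\lambda=(x_0\sqrt{1+\nu_0^2})^{-1}$ never vanishes and has one sign (positive for compatibly chosen orientations, and merely reversed globally if one flips $\tilde\nu$ alone). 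This is exactly the assertion, and it shows in particular that $\vec r$ is strictly convex in $\mathbb{H}^n$ iff $\tilde r$ is strictly convex in $\mathbb{R}^n$, which is what the Beltrami-map reduction needs. I expect the only genuine work to be the bookkeeping in this last step, organizing the expansion of $\partial_i\partial_j\tilde r$ so that the cancellations are transparent; there is no conceptual obstacle, everything being driven by the two linear identities coming from $\nu\cdot\vec r=0$ and $\nu\cdot\vec r_i=0$.
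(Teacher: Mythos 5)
Your proposal is correct, and it reaches the same conclusion by a noticeably different computation than the paper. The paper works entirely with determinant (generalized cross-product) expressions: it writes the Minkowski unit normal of $\vec{r}$ inside the hyperboloid as an $(n+1)\times(n+1)$ determinant with rows $(-(x_0)_i,\bar{r}_i)$ and $(-x_0,\bar{r})$, obtains $h_{ij}$ by replacing the first row with $(-(x_0)_{ij},\bar{r}_{ij})$, does the analogous determinant computation for $\tilde{r}=\bar{r}/x_0$ in $\mathbb{R}^n$, and then factors $x_0$ out of rows to see that the two determinants are proportional, arriving at the conformal factor $(-x_0)^{n+1}\sqrt{|\tilde{g}|}/\sqrt{|g|}$. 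You instead exploit the two linear identities $\bar\nu\ast\bar{r}=\nu_0x_0$ and $\bar\nu\ast\bar{r}_i=\nu_0(x_0)_i$ coming from $\nu\cdot\vec{r}=\nu\cdot\vec{r}_i=0$ to recognize the spatial part $\bar\nu$ of the hyperbolic normal as a Euclidean normal along $\tilde{r}$, and then a short Gauss-formula computation gives the explicit pointwise identity $\tilde{h}_{ij}=h_{ij}/\bigl(x_0\sqrt{1+\nu_0^2}\bigr)$; I checked the cancellations and they are right. What your route buys is a closed-form conformal function whose sign-definiteness is immediate (rather than being read off from a ratio of area elements and a power of $-x_0$), and it makes transparent that strict convexity is preserved in both directions under the Beltrami map, which is the point of the lemma. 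One small inaccuracy, harmless to the argument: the immersedness of $\tilde{r}$ does not need strict convexity or a transversality hypothesis on $\bar\nu$ — it is automatic, since the Beltrami map is a diffeomorphism of the upper hyperboloid onto the ball (equivalently, a relation $x_0\bar{v}=v_0\bar{r}$ for a tangent vector $V=(v_0,\bar{v})$ forces $V\cdot V=-v_0^2/x_0^2\le 0$, hence $V=0$ on a spacelike surface).
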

\begin{proof}
The normal direction of $\vec{r}$ in hyperboloid is
$$\frac{1}{\sqrt{|g|}}\left|\begin{matrix}e_0&e_1&\cdots&e_n\\
-(x_0)_1& &\bar{r}_1\\
\cdots&&\cdots\\
-(x_0)_{n-1}& & \bar{r}_{n-1}\\
-x_0& & \bar{r}
\end{matrix}\right|.$$
Here, $e_i=\frac{\partial}{\partial x_i},0\leq i\leq n$ and $g$ is the first fundamental form of $\vec{r}$. Hence the second fundamental form for $\vec{r}$ in hyperbolic space is
$$\frac{1}{\sqrt{|g|}}\left|\begin{matrix}-(x_0)_{ij}&&\bar{r}_{ij}\\
-(x_0)_1& &\bar{r}_1\\
\cdots&&\cdots\\
-(x_0)_{n-1}& & \bar{r}_{n-1}\\
-x_0& & \bar{r}
\end{matrix}\right|=\frac{(-1)^{n+1}x_0}{\sqrt{|g|}}\left|\begin{matrix}\bar{r}_{ij}-\frac{(x_0)_{ij}}{x_0}\bar{r}\\
\bar{r}_1-\frac{(x_0)_1}{x_0}\bar{r}\\
\cdots\\
\bar{r}_{n-1}-\frac{(x_0)_{n-1}}{x_0}\bar{r}\end{matrix}\right|.$$
On the other hand, the normal of the hypersurface $\tilde{r}$ is
$$\frac{1}{\sqrt{|\tilde{g}|}}\left|\begin{matrix}e_1&\cdots&e_n\\
&\frac{\bar{r}_1}{x_0}-\frac{(x_0)_1}{x_0^2}\bar{r}\\
&\cdots\\
&\frac{ \bar{r}_{n-1}}{x_0}-\frac{(x_0)_{n-1}}{x_0^2}\bar{r}
\end{matrix}\right|,$$
where $\tilde{g}$ is the first fundamental form of $\tilde{r}$.
Note that $$\tilde{r}_{ij}=\frac{\bar{r}_{ij}}{x_0}-\frac{(x_0)_{ij}\bar{r}}{x_0^2}-\frac{(x_0)_j\tilde{r}_i+(x_0)_i\tilde{r}_j}{x_0}.$$
A direct calculation of $\tilde{h}_{ij}=\tilde{r}_{ij}\ast \tilde{n}$ immediately yields the two fundamental forms are conformal, and moreover the conformal function is $(-x_0)^{n+1}\frac{\sqrt{|\tilde{g}|}}{\sqrt{|g|}}.$
\end{proof}

Using these preliminaries, we can prove  the Theorem \ref{rigidity}.\\

\begin{pfthmsl}
It's divided into three cases. For Euclidean space, Proposition \ref{15} says that $\tau,$ the solutions to \eqref{infi}, are generated by the Lie algebra of the Lie group $O(3)\times \mathbb{R}^3$ which is the isometry group of the three dimensional Euclidean space.

For the hyperbolic space case, the previous discussion shows that we have two constant vectors $Y,Z$ in $\mathbb{R}^3$, such that
$$A=Y\times_E \bar{r}+x_0 Z,$$ where $\times_E$ is the cross product of Euclidean space. Hence, we have
$$\tau=(x_0Z\ast\bar{r}, Y\times_E \bar{r}+x_0Z).$$ Denote the Beltrami map by $p,$ the pre image of $\frac{\p}{\p x_i}$ is $E_i=\frac{x_i}{x_0}\frac{\p}{\p x_0}+\frac{\p}{\p x_i}$ since the coordinate of $\mathbb{R} ^3$ is $x_1,x_2,x_3.$ Let
$X=p^*\bar{r}, Y_0=p^*Y, Z_0=p^*Z$. It is easy to check that
$$\tau=x_0Y_0\times X+x_0Z_0,$$ where $\times$ is the cross product in hyperbolic space. The result was proved by Lin-Wang \cite{LW} at first. It is also obvious that $\tau$ is generated by the Lie algebra of the isometry group of Minkowski space $\mathbb{R}^{1,3}$ restricted to hyperboloid, see \cite{K} for more detail.

The third case is sphere space. It is well known that any strictly convex surface in three sphere is included in some hemisphere by Bonnet-Myers theorem \cite{M}.  We pose the radius $1/\kappa$ three sphere in  $\mathbb{R}^4$. Let its  coordinate be $x_0,x_1,x_2,x_3$. Without loss of generality, assume the ambient space is the upper hemisphere, namely  $x_0\geq 0$.  Using the Beltami map, we project the hemisphere to its equator hyperplane $\{x_1,x_2,x_3\}$.  Then, using the parameters $x_1,x_2,x_3$, the metric  can be rewritten by \eqref{metric}  with the warped function $f=\sqrt{1-\kappa r^2}$. A similar argument in  hyperbolic case shows the solutions to the infinitesimal problem also are
$$\tau=x_0Y_0\times X+x_0Z_0.$$ The meaning of $\times, X, Y_0,Z_0$ are similar except $x_0=\sqrt{1-x_1^2-x_2^2-x_3^2}$. The solutions also are generated by the Lie algebra of the Lie group $O(4)$, which is the isometry group of the three sphere. We complete the proof.
\end{pfthmsl}

\begin{rema}
The previous discussion also can be generalized into the hypersurface in high dimensional space form. Since we have more algebraic relations exhibited in Gauss equations, the convexity can be weakened to the condition that the rank of the second fundamental form matrix is equal to or great than two.  This result was proved by Dajczer-Rodriguez in \cite{DR} at first. More material about the infinitesimal rigidity for hypersurface can be found in \cite{D}.
\end{rema}
At the end of the section, we would like to point out another fact which is very natural.
\begin{rema}
In space form, the linear system $$d\vec{r}\cdot D\tau=q $$ can be rewritten  as inhomogeneous type of \eqref{Es}
$$d\frac{\bar{r}}{x_0}\ast d\frac{A}{x_0}=\frac{q}{x_0^2}.$$ Hence the solvability of the former linear system in Euclidean space implies the one in space form. The Euclidean space case has been done in \cite{N1}. As a direct corollary, the linear system  is always solvable in space form.

\end{rema}
\section{The non infinitesimal rigidity and non rigidity }
An important corollary of the infinitesimal rigidity in the space form is that we can calculate the dimension of the kernel of the linearized equation \eqref{3.3} discussed in section $3$.
\begin{theo}
Suppose $\vec{r}$ is a strictly convex surface in any three dimensional warped product space. The dimension of the solution space to the linear homogeneous equation
\begin{eqnarray}\label{ID}
d\vec{r}\cdot D\tau=0,\end{eqnarray} is always six, where $\tau$ is a smooth vector field in warped product space defined on sphere.
\end{theo}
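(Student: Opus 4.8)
The plan is to show the solution space of \eqref{ID} is exactly six-dimensional by producing six explicit linearly independent solutions and then proving there are no others. For the lower bound, I would exhibit the solutions coming from the ambient isometries of Euclidean space: as in Lemma \ref{le8}, for $\alpha=1,2,3$ the vector fields $E_\alpha = \frac{\p}{\p z^\alpha}\times_E \vec r = \frac{\p}{\p z^\alpha}\times X$ solve \eqref{3.10}=\eqref{ID}; and the three coordinate constant vector fields $\frac{\p}{\p z^\alpha}$ of $\mathbb{R}^3$ (equivalently, the translations of the flat metric) also solve \eqref{ID}, since for them $D_i \frac{\p}{\p z^\alpha}$ is tangent to... — actually one must be careful: constant vectors of $\mathbb{R}^3$ need not satisfy $d\vec r\cdot D\tau=0$ in the warped metric. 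The clean way is to note that the operator $L_h$ of section $3$ is a strong elliptic Fredholm operator of index zero (it maps rank-$2$ bundle to rank-$2$ bundle, $\mathrm{tr}_h$-traceless part), so $\dim\ker L_h = \dim\mathrm{coker} L_h = \dim\ker L_h^\ast$; and by Theorem \ref{LE} we proved $\ker L_h^\ast=0$, hence $\ker L_h = 0$ as well. So the full kernel of \eqref{ID} reduces to solving $\phi = \frac{\mathrm{tr}_h(\nabla\omega - q)}{2}$ with $q=0$ and $\omega = L_h$-kernel; since $L_h$ has trivial kernel, $\omega\equiv 0$, so $u_i = \tau\cdot\vec r_i = 0$ and $\phi = \tau\cdot\nu$ is unconstrained by $L_h$ — wait, $\phi$ is determined by $\phi = \frac{\mathrm{tr}_h\nabla\omega}{2}=0$. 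That would force $\tau=0$, which is wrong.

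So the correct bookkeeping is: the linearized isometric system \eqref{3.3} for $\tau$ (three unknowns $u_1,u_2,\phi$, three equations) is \emph{not} the same as the equation $L_h(\omega)=q+\phi h$; rather \eqref{3.3} is equivalent to the pair ``$L_h(\omega)=q-\frac{\mathrm{tr}_h q}{2}h$ together with $\phi=\frac{\mathrm{tr}_h(\nabla\omega-q)}{2}$,'' but crucially $\omega$ and $\phi$ together reconstruct $\tau$ only up to the decomposition $\tau = u^i\vec r_i + \phi\nu$ with $\nabla\omega$ encoding $u_{i,j}$. The homogeneous system $q=0$ therefore has solution space parametrized by $\ker$ of the \emph{first-order} system \eqref{3.3}, not $\ker L_h$. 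The argument I would actually run: first, every solution $\tau$ of \eqref{ID} gives, via $\Omega\cdot\tau$-type constructions or more directly via the classical correspondence, a closed form and hence is rigid modulo a finite-dimensional space; second, locally the general solution of the Gauss--Codazzi-type first order system is governed by an overdetermined system whose solution space is finite dimensional, and by the infinitesimal rigidity Theorem \ref{rigidity} in the \emph{Euclidean} model the dimension is exactly $\dim\big(o(3)\oplus\mathbb{R}^3\big)=6$; third, I transport this count to the warped product case by observing that the linearized isometric operator $\tau\mapsto d\vec r\cdot D\tau$ has a symbol and Fredholm index that do \emph{not} depend on the warping function $f$ (the principal part only sees $d\vec r$, i.e. the embedding, not the ambient metric's lower-order terms), so the dimension of its kernel is the same as for the flat metric, namely six. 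This is the cleanest route and I would develop it carefully.

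Concretely, the key steps in order: (1) Recall from section $3$ that solving \eqref{ID} is equivalent to solving \eqref{3.3} with $q=0$, i.e. finding $(\omega,\phi)=(u_1dx^1+u_2dx^2,\phi)$ with $L(\omega)=\phi h$ and then reconstructing $\tau$ from $u_i=\tau\cdot\vec r_i$, $\phi=\tau\cdot\nu$. (2) Show this reconstruction is a bijection between $\{(\omega,\phi)\text{ solving }L(\omega)=\phi h\}$ and $\ker(d\vec r\cdot D\cdot)$: injectivity is clear since $\{\vec r_1,\vec r_2,\nu\}$ frames the ambient bundle along $\vec r$; surjectivity is the computation \eqref{3.2}. (3) Invoke Theorem \ref{rigidity}: when $f(r)=\sqrt{1+\kappa r^2}$ (space form), the solution space of \eqref{ID} is precisely the Lie algebra of the isometry group, which is $6$-dimensional in all three space-form cases ($o(3)\oplus\mathbb{R}^3$, $o(1,3)$ restricted to the hyperboloid, $o(4)$ — each of dimension $6$). (4) Deformation/index argument: the first-order linear operator $P_f:\tau\mapsto d\vec r\cdot D^{(f)}\tau$ depends on $f$ only through $0$th order terms (the Christoffel symbols of $ds^2$ enter $D^{(f)}\tau$ undifferentiated), so $P_f - P_{f_0}$ is an order-zero operator for any two warping functions $f,f_0$; hence $P_f$ has the same principal symbol and, being overdetermined-elliptic of the type already analyzed, the same Fredholm index, and — combining with Theorem \ref{LE}, which gives surjectivity (hence cokernel $0$) for \emph{every} $f$ — the same kernel dimension. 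Since for $f_0$ a space-form warping this dimension is $6$, it is $6$ for all $f$.

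\textbf{Main obstacle.} The delicate point is Step (4): making rigorous that the kernel dimension is deformation-invariant. One cannot just cite ``index is constant'' because $P_f$ maps sections of a rank-$3$ bundle (vector fields $\tau$ along $\vec r$) to sections of a rank-$3$ bundle (symmetric $2$-tensors $q$), and it is a genuinely overdetermined-elliptic first-order system whose analysis was done in section $3$ only after the clever reduction to $L_h$ and the separate treatment of $\phi$. I expect the honest argument is: by Theorem \ref{LE}, $P_f$ is surjective for all admissible $f$ (with the same convex $\vec r$), so $\dim\ker P_f = \dim\mathrm{coker}\,P_f + \mathrm{index}(P_f) = \mathrm{index}(P_f)$; and $\mathrm{index}(P_f)$ is independent of $f$ because $P_f-P_{f_0}$ is a compact (order $\le 0$) perturbation. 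Then one computes $\mathrm{index}(P_{f_0})=6$ using the space-form case where the kernel is explicitly the $6$-dimensional Lie algebra and the cokernel is again $0$. Alternatively — and this is the route I would ultimately prefer to write up because it avoids index theory — one fixes the strictly convex $\vec r$ and notes that the equation $L(\omega)=\phi h$ is the \emph{same} equation regardless of the ambient warping (it only involves the intrinsic connection $\nabla$, the second fundamental form $h$, and nothing about $f$), so the solution space of \eqref{3.3} with $q=0$ literally does not see $f$; hence its dimension equals the value in any convenient space form, which Theorem \ref{rigidity} identifies as $6$. I would double-check that $h$ (not $f$) is all that appears, which is exactly what \eqref{3.2} shows, and conclude.
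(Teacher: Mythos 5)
Your core idea matches the paper's: show the index of the relevant operator is constant, use Theorem \ref{LE} for vanishing cokernel, and anchor the count at $6$ in the space-form case. However, both concrete routes you offer have gaps, and the paper's actual argument is different in exactly the place where your routes break down.

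\textbf{The ``preferred alternative route'' does not work as stated.} You argue that the homogeneous system $L(\omega)=\phi h$ (equivalently $L_h(\omega)=0$) only sees $(g,h,\nabla)$ and not $f$, and conclude its kernel ``equals the value in any convenient space form, which Theorem \ref{rigidity} identifies as $6$.'' But Theorem \ref{rigidity} only applies to a pair $(g,h)$ that actually arises from a convex surface \emph{in} a space form, and the $(g,h)$ you are handed (from a strictly convex surface in a general warped product) will generically fail the space-form Gauss--Codazzi constraints, so no such surface exists. Knowing that the equation depends only on $(g,h)$ is not enough to transport the kernel \emph{dimension}: only an index argument lets you do that, and then only when combined with cokernel-vanishing at both ends.

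\textbf{The index route as you set it up also has a gap.} You propose to deform the warping function $f$ while keeping the point set $\vec r$ fixed, noting $P_f-P_{f_0}$ is order zero. The problem is that as $f$ varies, the second fundamental form of $\vec r$ with respect to the varying ambient metric changes and may fail to stay positive definite, so the reduced operator $L_h$ need not remain elliptic along your path, and worse, the endpoint $\vec r$ need not be convex in the space form, so Theorem \ref{rigidity} does not apply to it. The paper sidesteps this entirely: after reducing to $L_h$, it observes that the \emph{principal symbol} of $L_h$ depends only on $h$ (see \eqref{3.4} and \eqref{7.2}), and homotopes $h$ directly through the convex path $h^t=(1-t)h+tg_0$, which stays positive definite automatically. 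At $t=1$ the operator $L_{g_0}$ is literally that of the unit sphere in Euclidean $\mathbb{R}^3$, where by Theorem \ref{rigidity} the kernel is $6$-dimensional and by Theorem \ref{LE} the cokernel vanishes, so $\mathrm{ind}(L_{g_0})=6$; index invariance along $h^t$ plus cokernel-vanishing at $h$ then gives $\dim\ker L_h=6$. If you rewrite your Step (4) to homotope $h$ inside the space of positive definite forms rather than homotoping $f$ (and track the symbol, not the full operator), your argument aligns with the paper's and becomes correct.
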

\begin{proof}
In section $3$, we have shown that the dimension of the solution space to  \eqref{ID} is the dimension of the $\ker L_h$ defined by \eqref{L_h}. By the discussion of the previous section, we know that it is six in space form. Since the operator $L_h$ is elliptic defined on the cotangent bundle on sphere for strictly convex surface $\vec{r},$ it is a Fredholm operator. Its index is defined by
$$\text{ind} (L_h)=\text{dim}\ker L_h-\text{dim coker}L_h. $$ In section $3,$ we have proved Theorem \ref{LE} which implies
$\text{coker} L_h=0$ for any three dimensional warped product space.
Hence we have $$\text{ind}(L_h)=\text{dim}\ker L_h.$$ It is well known that the index only depends on the principal symbol of the operator. Furthermore, for a family of operators $L(t),$ let $a(t)$ be the family of the principal symbols. Suppose  $a(t)$ is invertible, denote its inverse by $b(t)$ and  $a(t), b(t)$ both are uniformly bounded, then the index of these operators are invariant. See
 \cite{HIII} for more detail.  In our case, by \eqref{L_h} the principal symbol of $L_h$ only depends on the second fundamental form $h$.
 Hence  for any given $h,$ we always can choose a path,
 $$h^t=(1-t)h+t g_0,$$ where $0\leq t\leq 1$ and $g_0$  is the canonical metric on sphere  whose Gauss curvature is constant $1$. In view of \eqref{3.4}, the principal symbol of the operators defined by $h^t$ are uniformly bounded. The inverse of these principal symbol  are
\begin{eqnarray}\label{7.2}
u_1&=&\frac{1}{\eta}[(q_{11}h^t_{22}-q_{22}h^t_{11})\xi_1-2(h^t_{12}q_{11}-h^t_{11}q_{12})\xi_2]\\
u_2&=&\frac{1}{\eta}[(q_{22}h^t_{11}-q_{11}h^t_{22})\xi_2-2(h^t_{12}q_{22}-h^t_{22}q_{12})\xi_1]\nonumber\\
\eta&=&h^t_{22}\xi_1^2+h^t_{11}\xi_2^2-2h^t_{12}\xi_1\xi_2\nonumber,
\end{eqnarray}
where $\xi$ is some unit cotangent vector and $q$ is symmetric $(0,2)$ tensor with $tr_{h^t} q=0 $. Obviously, the inverse principal symbols \eqref{7.2} are also uniformly bounded. Thus the index of the operator $L_h$ is same as $L_{g_0}.$ We can view the $L_{g_0}$  as the unit sphere embedded in Euclidean space. Hence  by the infinitesimal rigidity discussed in the previous section, $\text{ ind }(L_{g_0})=6,$  which implies
$$\text{dim}\ker L_h=6.$$
\end{proof}
A corollary of the above theorem is the following non infinitesimal rigidity.
\begin{coro}  If the sectional curvature of three dimensional warped product space is not constant, namely
$$\frac{f(r)f'(r)}{r}+\frac{1-f^2(r)}{r^2}\neq 0,$$
 for any embedded strictly convex surface $\vec{r},$ it isn't infinitesimally rigid.
\end{coro}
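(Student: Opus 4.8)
The plan is to deduce this corollary immediately from the theorem just proved, by contrasting the $6$-dimensional solution space of \eqref{ID} with the much smaller space of \emph{trivial} infinitesimal deformations. First I would record that every Killing field $E$ of the ambient warped product space restricts to a solution of \eqref{ID}: its flow $\Phi_t$ consists of ambient isometries, so $\Phi_t\circ\vec{r}$ is a one-parameter family of isometric embeddings of $(\mathbb{S}^2,g)$ and $\tau=E|_{\vec{r}}$ satisfies $d\vec{r}\cdot D\tau=0$. Thus restriction defines a linear map from the Lie algebra of the isometry group of the ambient space into the solution space of \eqref{ID}, and this image is exactly the space of deformations that is quotiented out in the definition of infinitesimal rigidity.

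Next I would check that this restriction map is injective, so that its image has dimension exactly $\dim\mathrm{Isom}$. If a Killing field $E$ vanishes identically along the surface $\vec{r}(\mathbb{S}^2)$, then $\nabla_X E=0$ for every $X$ tangent to the surface; since $\nabla E$ is antisymmetric, this forces $\nabla E=0$ at every point of the surface as well, and a Killing field with $E=0$ and $\nabla E=0$ at a point of a connected manifold vanishes identically. Finally, because the sectional curvature of the ambient space is not constant (which is exactly the hypothesis, since the two sectional curvatures in \eqref{curv} are $ff'/r$ and $(f^2-1)/r^2$), the ambient space is not a space form, so by the classical fact (Kobayashi--Nomizu) that a connected Riemannian $3$-manifold carries a $6$-dimensional Lie algebra of Killing fields only when it has constant sectional curvature, we obtain $\dim\mathrm{Isom}\leq 5$.

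Putting the pieces together: the image of the restriction map is a proper subspace of the $6$-dimensional solution space of \eqref{ID}, so for every embedded strictly convex $\vec{r}$ there is a solution of $d\vec{r}\cdot D\tau=0$ that does not come from any ambient Killing field, which is precisely the failure of infinitesimal rigidity. Essentially all the work has already been done in the Fredholm-index computation of the preceding theorem; the only mild points needing care here are the injectivity of the restriction map and the standard bound $\dim\mathrm{Isom}\leq 5$ for a non-space-form, neither of which is a genuine obstacle.
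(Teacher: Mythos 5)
Your proposal is correct and takes essentially the same approach as the paper: compare the $6$-dimensional kernel of \eqref{ID} from the preceding theorem against the Lie algebra of the ambient isometry group, which has dimension strictly less than $6$ because a $3$-manifold with a $6$-dimensional isometry group must have constant sectional curvature, and conclude there is a solution not arising from any Killing field. Your extra check that the restriction map is injective is fine but not actually needed for the conclusion, since an at-most-$5$-dimensional Lie algebra has an image of dimension at most $5$ under any linear map, which already misses the $6$-dimensional kernel.
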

\begin{proof}
There are some well known facts about the isometry group of any $n$ Reimannian manifold. It is a Lie group and its dimension is at most  $n(n+1)/2.$ If the dimension achieves $n(n+1)/2$, it should be some constant sectional curvature space. See \cite{K} and \cite{Ei} for more explanation. In our case, since our three dimensional warped space is not space form, the dimension of the isometry group is less than six which implies the dimension of its Lie algebra is also less than six. The discussion in section $3$ tells us that for any $A$ in the Lie algebra,  $\tau$ generated by $A$ is the solution to \eqref{ID}.  But the previous theorem tell us that the kernel of the linear equation \eqref{ID} is always six. Hence there is some solution to \eqref{ID} which doesn't come from the Lie algebra, namely it's nontrivial solution. Hence convex surface is not infinitesimally rigid any more.
 \end{proof}

In view of the proof of the openness, we can use the previous corollary to obtain the non rigidity for any strictly convex surface.
\begin{theo}
In any three dimensional warped product space which is not a  space form,
for any embedded strictly convex surface, it isn't rigid.\end{theo}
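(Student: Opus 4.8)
The plan is to integrate a non-trivial infinitesimal deformation into an honest one-parameter family of isometric embeddings, recycling the contraction-mapping scheme used in the proof of Theorem~\ref{openness}.

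Fix a strictly convex embedded surface $\vec{r}$ of $(\mathbb{S}^2,g)$. By the corollary above, the homogeneous system $d\vec{r}\cdot D\tau=0$ has a solution that is not the restriction to $\vec{r}$ of any ambient Killing field: its solution space is $6$-dimensional, while the Lie algebra of $\mathrm{Isom}(M)$ has dimension $<6$ because $M$ is not a space form. I will first record that one may in fact choose such a solution $\tau_0$ with $\dot{h}(\tau_0)\neq 0$, where $\dot h(\tau)$ denotes the first variation of the second fundamental form along $\tau$ (well defined on the kernel of $d\vec{r}\cdot D(\cdot)$ since there $\dot g=0$). Indeed $\tau\mapsto\dot h(\tau)$ is linear on this $6$-dimensional kernel, and its kernel consists of the infinitesimal deformations fixing both fundamental forms to first order, which by the infinitesimal form of the fundamental theorem of hypersurfaces are precisely the restrictions of ambient Killing fields — a space of dimension $\dim\mathrm{Isom}(M)<6$. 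Hence $\dot h\not\equiv 0$ and a good $\tau_0$ exists.

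Next I would look for an isometric family $\vec{r}_t=\vec{r}+t\tau_0+\vec{y}_t$ with $\vec{y}_0=0$. Substituting into $d\vec{r}_t\cdot d\vec{r}_t=g$ and arguing exactly as in the derivation preceding \eqref{4.5}, the defect becomes an inhomogeneous infinitesimal-deformation equation $d\vec{r}\cdot D\vec{y}_t=q_t(\vec{y}_t,\nabla\vec{y}_t)$ in which $q_t$ collects only quadratic and higher order terms in $\vec{y}_t$ together with the extra contribution $-t^2\,d\tau_0\cdot d\tau_0$ produced by $d(\vec{r}+t\tau_0)\cdot d(\vec{r}+t\tau_0)-g$; here we used $d\vec{r}\cdot D\tau_0=0$. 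In particular $\|q_t\|$ is $O(t^2)$ in the relevant H\"older norms. By Theorem~\ref{LE} the linearized operator is solvable, and Lemma~\ref{le12} together with the contraction-mapping argument from the proof of Theorem~\ref{openness} yield, for $|t|$ small, a solution with $\|\vec{y}_t\|_{C^{2,\alpha}(\mathbb{S}^2)}=O(t^2)$. Thus $\vec{r}_t$ is a genuine family of isometric embeddings of $(\mathbb{S}^2,g)$ with $\vec{r}_t\to\vec{r}$ in $C^{2,\alpha}$, so it is strictly convex for small $t$, and its second fundamental form satisfies $h_t=h+t\,\dot h(\tau_0)+O(t^2)$. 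Since $\dot h(\tau_0)\neq 0$, we get $h_t\not\equiv h$ for all small $t\neq 0$, i.e. $\vec{r}_t$ is a convex surface isometric to $(\mathbb{S}^2,g)$ with a different second fundamental form; hence $\vec{r}$ is not rigid. (Equivalently, one can argue by contradiction on moduli: by $\mathrm{coker}(L_h)=0$ the isometric embeddings of $(\mathbb{S}^2,g)$ near $\vec{r}$ form a $6$-dimensional family, while those sharing the second fundamental form of $\vec{r}$ lie in a single $\mathrm{Isom}(M)$-orbit of dimension $<6$.)

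The main obstacle is the input used in the second paragraph: one must be certain that a non-trivial infinitesimal isometric deformation actually moves the second fundamental form, equivalently that the only infinitesimal isometric deformations fixing both fundamental forms come from ambient Killing fields. In a space form this is classical and is exactly what forces rigidity there; in a general warped product space it should be obtained by writing down the extension system for such a deformation along curves of $\vec{r}$, governed by the ambient connection and curvature \eqref{2.1}, \eqref{curv}, and checking that its solution space has dimension at most $\dim\mathrm{Isom}(M)$. Once this point is settled, everything else is a routine repetition of the openness argument with $\tilde g=g$.
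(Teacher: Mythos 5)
Your construction of the family $\vec{r}_t=\vec{r}+t\tau_0+\vec{y}_t$ via the contraction scheme matches the paper's proof verbatim, and the starting point (a solution $\tau_0$ of the homogeneous system not coming from the Lie algebra of $\mathrm{Isom}(M)$) is the same. Where you diverge is the last step, and that is precisely where your argument has a gap — one you yourself flag. You want $\tau_0$ with $\dot h(\tau_0)\neq 0$, and to get it you invoke an ``infinitesimal fundamental theorem of hypersurfaces'' asserting that any infinitesimal isometric deformation killing $\dot h$ as well must be the restriction of an ambient Killing field. That statement is not proved in the paper, is not an immediate consequence of anything proved in the paper, and is not automatic in an arbitrary ambient Riemannian manifold (the classical Bonnet-type theorem requires constant curvature; the extension-system argument you sketch in your last paragraph is exactly the missing lemma). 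The parenthetical ``moduli'' variant has the same hole: the claim that surfaces sharing the second fundamental form of $\vec{r}$ lie in a single $\mathrm{Isom}(M)$-orbit is another form of the Bonnet theorem.

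The paper sidesteps this entirely and is weaker but self-contained. After producing the isometric family $\vec{r}_\epsilon=\vec{r}+\epsilon T+\epsilon^2 T_\epsilon$, it does not analyze $\dot h$ at all. Instead it argues by contradiction: if every $\vec{r}_\epsilon$ were obtained from $\vec{r}$ by an ambient isometry, say $\vec{r}_\epsilon=g_\epsilon\vec{r}$ for a one-parameter family $g_\epsilon\in\mathrm{Isom}(M)$, then differentiating at $\epsilon=0$ would force $T$ to be the restriction of an infinitesimal isometry, contradicting the choice of $T$. This delivers exactly the notion of ``not rigid'' the paper has in mind here — not congruent under ambient isometries (see the discussion around \eqref{DFF}, where a translated surface in the flat ball is still counted as non-rigid even though its second fundamental form is unchanged); the stronger statement about a change in the second fundamental form is deferred to Theorem~\ref{NR} and proved there only for slice spheres by an explicit computation. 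So either keep your first two paragraphs but replace the $\dot h(\tau_0)\neq 0$ step by the paper's differentiation-of-$g_\epsilon$ contradiction, or supply and prove the extension-system lemma you allude to; as written the proposal does not close.
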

\begin{proof}By the previous corollary, there exists some vector field $T$, which is the solution of \eqref{ID}, such that there is no action $A$ in the Lie algebra of the isometry group  satisfying $T=A\vec{r}$. In what follows, we will generate a family of isometric strictly convex surfaces $\vec{r}_{\epsilon}$ by the vector field $T.$ Let
 $$\vec{r}_{\epsilon}=\vec{r}+\epsilon T+ \epsilon^2 T_{\epsilon}$$ where $T_{\epsilon}$ is some undetermined vector field. As required,  $\vec{r}_{\epsilon}$ satisfy isometric system
$$d\vec{r}_{\epsilon}\cdot d\vec{r}_{\epsilon}=d\vec{r}\cdot d\vec{r}.$$

As discussed in section $4$,  we have a system similar to \eqref{4.5},
$$d\vec{r}\cdot D(\epsilon T+\epsilon^2 T_{\epsilon})=q(\epsilon T+\epsilon^2 T_{\epsilon},\nabla(\epsilon T+\epsilon^2 T_{\epsilon})),$$ which implies
$$d\vec{r}\cdot D  T_{\epsilon}=\frac{1}{\epsilon^2}q(\epsilon T+\epsilon^2 T_{\epsilon},\nabla(\epsilon T+\epsilon^2 T_{\epsilon})).$$
We will repeat the argument in the proof of the openness to find the solution $T_{\epsilon}$ to the above nonlinear system. We define some map using the previous system,
\begin{eqnarray}
F: C^{2,\alpha}(\mathbb{S}^2)&\mapsto &C^{2,\alpha}(\mathbb{S}^2)\nonumber,\\
T_{\epsilon}&\mapsto &T^*_{\epsilon}\nonumber,
\end{eqnarray}
where $T^*_{\epsilon}$ is the solution to
$$d\vec{r}\cdot D  T^*_{\epsilon}=\frac{1}{\epsilon^2}q(\epsilon T+\epsilon^2 T_{\epsilon},\nabla(\epsilon T+\epsilon^2 T_{\epsilon})),$$
and perpendicular to the kernel of \eqref{3.3}.
By Theorem \ref{LE}, the previous system is solvable and then we can find unique solution $T^*_{\epsilon}$ which is perpendicular to the kernel of \eqref{3.3},
thus the map $F$ is well defined.
Then, by \eqref{estimate} we have
\begin{eqnarray}\label{7.3}
&&\|F(T_{\epsilon})\|_{C^{2,\alpha}(\mathbb{S}^2)}\\
&\leq& C_1(\| T+\epsilon T_{\epsilon}\|_{C^{2,\alpha}(\mathbb{S}^2)}^{2}+\epsilon\|T+\epsilon T_{\epsilon}\|_{C^{2,\alpha}(\mathbb{S}^2)}^{3}
+\epsilon^2\|T+\epsilon T_{\epsilon}\|_{C^{2,\alpha}(\mathbb{S}^2)}^{4}).\nonumber
\end{eqnarray}
We  let $C_2=2C_1\|T\|^2_{C^{2,\alpha}(\mathbb{S}^2)}$ and use the notation $B_{C_2}$ defined in section $4$. The previous estimates tell us that the map $F$ is  from $B_{C_2}$ to $B_{C_2}$ for sufficient small $\epsilon$.  Using the explicit expression of $q_{ij}$ in section $4$, we also have
 $$\|F(T^1_{\epsilon})-F(T^2_{\epsilon})\|_{C^{2,\alpha}(\mathbb{S}^2)}\leq C\epsilon\|T^1_{\epsilon}-T^2_{\epsilon}\|_{C^{2,\alpha}(\mathbb{S}^2)},$$
 for $T^1_{\epsilon},T^2_{\epsilon}$ in $B_{C_2}$. If we choose sufficient small $\epsilon$, the map $F$ is a contraction map. Then, by contraction mapping theorem, we obtain the family of strictly convex surfaces $\vec{r}_{\epsilon}$.

We claim there is some $\epsilon_0$ such that $\vec{r}_{\epsilon_0}$ can not move to the surface $\vec{r}$ by some isometry of the ambient space. If it is not true, we can find a family of isometry $g_{\epsilon}$ of the ambient space such that
$$\vec{r}_{\epsilon}=g_{\epsilon}\vec{r}. $$ Then we can take derivative with respect to $\epsilon,$ and letting $\epsilon=0$, we find that the  vector field $T$ is an infinitesimal isometry vector field, which contradicts to our assumption. Thus $\vec{r}_{\epsilon_0}$ is isometric to the surface $\vec{r}$ but cannot move to $\vec{r}$ by some isometry of the ambient space.
\end{proof}

Maybe, we can propose another type of rigidity. At first, let us see some example. In \eqref{metric}, if we choose  the following warped function in $\mathbb{R}^3$,
\begin{eqnarray}\label{DFF}
f(r)=\left\{\begin{matrix}1,\text { if }0\leq r\leq r_0\\
g(r),  \text{       if }r>r_0 \end{matrix}\right.,
\end{eqnarray}
where $r_0$ is some given positive number and $g(r)$ is arbitrary positive function. Then if there is some convex surface contained in the ball $B_{r_0}$, the translation of the surface in the ball will not change the shape of the surface. But if the function $g$ is non trivial, the warped product space defined by \eqref{DFF} is not a space form. Hence, by the previous theorem the convex surface in $B_{r_0} $ is still not rigid.

If the requirement is  more detail, we can find some counterexample that the perturbation surface do not have the same shape, namely same second fundamental form. We will discuss it in the next section.

\section{The non rigidity for spheres}
In this section, we will try to find some counterexample for the isometric embedding problem in any dimensional space. More precisely, we will find some convex body in $n$ dimensional warped product space which is isometric to  unit slice sphere, but its second fundamental form is not same as the slice sphere.  The method we will apply here is to follow similar tricks  in the openness. Hence we need to discuss the linearized equation
\begin{eqnarray}\label{8.1}
2d\vec{r}\cdot D\tau=q,
\end{eqnarray}
in $\mathbb{R}^n$ equipped with the metric $$ds^2=\frac{1}{f^2(r)}dr^2+r^2dS_{n-1}.$$
Similar to Section $3$, we can rewrite this system as
\begin{eqnarray}\label{8.2}
v_{i,j}+v_{j,i}=2\phi h_{ij}+q_{ij},
\end{eqnarray}
where $\omega=\sum_i v_i dx^i, v_i=\tau\cdot \vec{r}_i$ is the $1$-form on sphere, $\phi=\tau\cdot \nu$ and $q$ is a symmetric $(0,2)$ tensor on sphere. By \eqref{sphere}, we have
$$\phi h_{ij}=\frac{f^2(r)}{r}\tau\cdot  \frac{\p}{\p r}g_{ij}=\frac{1}{r}\tau\cdot_E\frac{\p}{\p r}g_{ij},$$
where $\cdot_E$ is the inner product of Euclidean space.
Let $\phi_E=\tau\cdot_E\nu_E$ and $h^E_{ij}$ be the second fundamental form of $r$ sphere in Euclidean space, then we have
$$u_{i,j}+u_{j,i}=2\phi_Eh^E_{ij}+q_{ij}.$$
That means  for any slice sphere the system \eqref{8.1} can be viewed in Euclidean space.  Now we write down it in polar coordinate.
Let $(u_1,u_2,\cdots, u_{n-1})$ be the sphere coordinate and $(r,\theta_1,\theta_2,\cdots,\theta_{n-1})$ be the polar coordinate in ambient space,
We present the radius $r$ sphere in warped product space by the map
\begin{eqnarray}
&&\vec{r}(u_1,\cdots,u_{n-1})\nonumber\\
&=&r(\cos u_1\cos u_2\cdots\cos u_{n-1},\cos u_1\cos u_2\cdots\cos u_{n-2}\sin u_{n-1},\cdots,\sin u_1).\nonumber
\end{eqnarray}
Hence $r$ is constant and $\theta_i=u_i$. It is obvious that
\begin{eqnarray}
\frac{\partial}{\partial \theta_i}=\frac{\partial}{\partial u_i}.\nonumber
\end{eqnarray}
By \eqref{8.1}, we have
\begin{eqnarray}\label{8.3}
\frac{\partial}{\partial u_i}\cdot_E \frac{\partial \tau}{\partial u_j}
+\frac{\partial }{\partial u_j }\cdot_E \frac{\partial\tau}{\partial u_i}&=&q_{ij}.
\end{eqnarray}
Define a symmetric tensor
\begin{eqnarray}\label{8.4}
&&q_{ij}(\vec{r},\vec{y})\\
&=&-\sigma_{\alpha\beta}(\vec{r})\vec{y}^{\alpha}_{i}\vec{y}^{\beta}_{j}
-F_{\alpha\beta\gamma\lambda}(\vec{r},\vec{y})\vec{r}^{\alpha}_{i}\vec{r}^{\beta}_{j}\vec{y}^{\gamma}\vec{y}^{\lambda}
-G_{\alpha\beta\gamma}(\vec{r},\vec{y})\vec{y}^{\gamma}(\vec{r}^{\alpha}_{i}\vec{y}^{\beta}_{j}
+\vec{y}^{\alpha}_{i}\vec{r}^{\beta}_{j}+\vec{y}^{\alpha}_{i}\vec{y}^{\beta}_{j}),\nonumber
\end{eqnarray}
where
$$
F_{\alpha\beta\gamma\lambda}(\vec{r},\vec{y})=\int_{0}^{1}(1-t)\partial^{2}_{\gamma\lambda}\sigma_{\alpha\beta}(\vec{r}+t\vec{y})dt$$
and
$$G_{\alpha\beta\gamma}(\vec{r},\vec{y})=\int_{0}^{1}\partial_{\gamma}\sigma_{\alpha\beta}(\vec{r}+t\vec{y})dt.$$
For equation \eqref{8.3}, we have the following property. Let the Euclidean coordinate in $\mathbb{R}^n$ be $\{ z^1,\cdots, z^n\},$
\begin{lemm}\label{lem10}
 Suppose $h(t)$ is some smooth function of one variable. If we let $$\vec{y}=h(\sin u_1)\frac{\p}{\p z^n} $$ on the radius $r$ sphere, the solution to  the equation \eqref{8.3} with $q_{ij}$ given by \eqref{8.4} can be chosen by
\begin{eqnarray}\label{8.5}
\tau=\tilde{h}(\sin u_1)\frac{\p}{\p z^n},
\end{eqnarray}
where $\tilde{h}$ is another one variable smooth function.
\end{lemm}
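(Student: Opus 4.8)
The plan is to show that for this highly symmetric choice of $\vec{y}$ the inhomogeneous term $q$ in \eqref{8.3} is supported entirely in the $du_1\otimes du_1$ slot, so that \eqref{8.3} collapses to a single first-order ODE for $\tilde{h}$. The conceptual reason is that $\vec{y}=h(\sin u_1)\frac{\p}{\p z^n}$ is invariant under the rotations of the ambient warped product space fixing the $z^n$-axis (note $\sin u_1=z^n/r$), and $\sigma_{\alpha\beta}$ is itself rotationally invariant; I will nevertheless argue by a short direct computation.

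First I would record that in Euclidean-type coordinates $z=(z^1,\dots,z^n)$, $r=|z|$, the warped metric is $\sigma_{\alpha\beta}(z)=\delta_{\alpha\beta}+\psi(r)z^\alpha z^\beta/r^2$ with $\psi=1/f^2-1$, so that $\sigma_{\alpha\beta}-\delta_{\alpha\beta}$ is rank one in $(\alpha,\beta)$. Consequently the only derivatives of $\sigma_{\alpha\beta}$ that occur in \eqref{8.4}, namely $\partial_{z^n}\sigma_{\alpha\beta}$ and $\partial^2_{z^nz^n}\sigma_{\alpha\beta}$, are combinations of $z^\alpha z^\beta$, $\delta^{\alpha n}z^\beta+z^\alpha\delta^{\beta n}$ and $\delta^{\alpha n}\delta^{\beta n}$ with coefficients depending only on $r$ and $z^n$, and carry no $\delta_{\alpha\beta}$ term. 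On the radius-$r_0$ slice sphere $\vec{r}$ one has the elementary identities $\vec{r}\cdot_E\vec{r}_i=\frac12\p_{u_i}|\vec{r}|^2=0$ and $\vec{r}^n_i=\frac{\p}{\p z^n}\cdot_E\vec{r}_i=r_0\cos u_1\,\delta_{i1}$; hence $\vec{y}^\alpha_i=h'(\sin u_1)\cos u_1\,\delta_{i1}\delta^{\alpha n}$, and for the interpolating point $w=\vec{r}+t\vec{y}$ one has $w\cdot_E\vec{r}_i=t\,h(\sin u_1)\,r_0\cos u_1\,\delta_{i1}$ while $|w|$ and $w^n$ are functions of $u_1$ alone.

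Feeding these facts into the explicit formula \eqref{8.4} for $q_{ij}$, I would check term by term that every sphere index $i$ or $j$ enters $q_{ij}$ only through a factor $\vec{y}^\alpha_i$ (which is $\propto\delta_{i1}\delta^{\alpha n}$) or through a factor $\vec{r}^\alpha_i$ contracted with a vector slot of a $z^n$-derivative of $\sigma$ evaluated at $w$; by the identities above this second kind of contraction also produces a factor $\delta_{i1}$ (the $z^\alpha$ slot gives $w\cdot_E\vec{r}_i$, the $\delta^{\alpha n}$ slot gives $\vec{r}^n_i$). It follows that $q_{ij}=\cos^2 u_1\,M(\sin u_1)\,\delta_{i1}\delta_{j1}$ for some function $M$ that is smooth on $[-1,1]$, being built from $h,h'$ and from $\sigma$ together with its derivatives evaluated at smooth functions of $\sin u_1$ and of $t\in[0,1]$. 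On the other hand, inserting the ansatz $\tau=\tilde{h}(\sin u_1)\frac{\p}{\p z^n}$ into the left-hand side of \eqref{8.3} and again using $\vec{r}^n_i=r_0\cos u_1\,\delta_{i1}$ produces an expression of the form $c\,\cos^2 u_1\,\tilde{h}'(\sin u_1)\,\delta_{i1}\delta_{j1}$ with $c>0$ an explicit constant. Thus \eqref{8.3} reduces to the scalar equation $c\,\tilde{h}'(s)=M(s)$, solved by $\tilde{h}(s)=\tilde{h}(0)+\frac1c\int_0^s M(\xi)\,d\xi$; this $\tilde{h}$ is smooth, so $\tau=\tilde{h}(\sin u_1)\frac{\p}{\p z^n}$ is a genuine smooth vector field on $\mathbb{S}^{n-1}$ solving \eqref{8.3}, and the free additive constant merely reflects that $\frac{\p}{\p z^n}$ already lies in the kernel of the reduced operator.

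The step I expect to cost the most care is the bookkeeping that shows no $\delta_{\alpha\beta}$-type contribution (equivalently, no component along the latitude $(n-2)$-spheres) survives in $q_{ij}$: wherever $\sigma_{\alpha\beta}$ is differentiated its constant part $\delta_{\alpha\beta}$ drops out, and in the one place it enters undifferentiated, namely the term $-\sigma_{\alpha\beta}(\vec{r})\vec{y}^\alpha_i\vec{y}^\beta_j$, the factors $\vec{y}^\alpha_i\propto\delta_{i1}\delta^{\alpha n}$ already impose the required $\delta_{i1}\delta_{j1}$ shape. Once this is verified the remainder is a routine integration in one variable.
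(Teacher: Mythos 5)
Your proposal is correct and follows essentially the same route as the paper: pass to the Euclidean coordinates where $\sigma_{\alpha\beta}=\delta_{\alpha\beta}+\psi(r)z^\alpha z^\beta$, use $\vec{y}^\alpha_i=h'\cos u_1\,\delta_{i1}\delta^{\alpha n}$, $\vec{r}\cdot_E\vec{r}_i=0$ and $\vec{r}^n_i=r\cos u_1\,\delta_{i1}$ to see that $q_{ij}$ is supported in the $(1,1)$ slot and equals $\cos^2u_1$ times a smooth function of $\sin u_1$, and then reduce \eqref{8.3} with the ansatz \eqref{8.5} to a first-order ODE solved by integration. The only differences are cosmetic: you argue the $\delta_{i1}\delta_{j1}$ shape structurally where the paper computes the three terms of \eqref{8.4} explicitly to exhibit the function $W$ in \eqref{W} (which it needs later for the contraction argument), and you correctly retain the constant $c=2r$ on the left-hand side, which the paper's displayed ODE tacitly absorbs.
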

\begin{proof}
To calculate $q_{ij}$ in Euclidean coordinate, we rewrite warped product metric in the Euclidean coordinate
\begin{eqnarray}\label{8.6}
ds^2&=&\frac{1}{f^2}dr^2+r^2dS_{n-1}\\
&=&(\frac{1}{f^2}-1)dr^2+dr^2+r^2dS_{n-1}\nonumber\\
&=&\psi(r)z^{\alpha}z^{\beta}dz^{\alpha}dz^{\beta}+\delta_{\alpha\beta}dz^{\alpha}dz^{\beta},\nonumber
\end{eqnarray}
where we have
\begin{eqnarray}\label{8.7}
\psi(r)=\frac{1}{r^2}(\frac{1}{f^2(r)}-1), \text{ and }  r^2=\sum_i(z^i)^2.
\end{eqnarray}

We divide $q$ into  three terms  $$q=-I-II-III.$$ In what follows we will calculate $q_{ij}$ term by term.
At first, we have
$$I_{ij}=\sigma_{nn}(\vec{r})h_ih_j.$$
Note that,
except $i=j=1$, the above term is zero, and $I_{11}$ is
\begin{eqnarray}
I_{11}&=&(\psi(r)(z^n)^2+1)h^2_1\nonumber\\
&=&(r^2\psi(r)\sin^2u_1+1)(h')^2\cos^2u_1.\nonumber
\end{eqnarray}
The second term is
$$II_{ij}=\int^1_0(1-t)\p_{nn}\sigma_{\alpha\beta}(\vec{r}+t\vec{y})z_i^{\alpha}z^{\beta}_jh^2 dt.$$
Let $\tilde{r}=|\vec{r}+t\vec{y}|,$ then we have
\begin{eqnarray}\label{8.8}
\p_n\sigma_{\alpha\beta}(\vec{r}+t\vec{y})&=&\frac{\psi'(\tilde{r})}{\tilde{r}}(z^n+t h)(z^{\alpha}+t\vec{y}^{\alpha})(z^\beta+t\vec{y}^{\beta})\\
&&+\psi(\tilde{r})(\delta_{\alpha n}(z^{\beta}+t\vec{y}^{\beta})+\delta_{\beta n}(z^{\alpha}+t\vec{y}^{\beta})).\nonumber
\end{eqnarray}
Thus we also have
\begin{eqnarray}
&&\p_{nn}\sigma_{\alpha\beta}(\vec{r}+t\vec{y})\\
&=&[\frac{\psi'(\tilde{r})}{\tilde{r}}]'(z^n+t h)^2(z^{\alpha}+t\vec{y}^{\alpha})(z^\beta+t\vec{y}^{\beta})+\frac{\psi'(\tilde{r})}{\tilde{r}}(z^{\alpha}+t\vec{y}^{\alpha})(z^\beta+t\vec{y}^{\beta})\nonumber\\
&&+2\frac{\psi'(\tilde{r})}{\tilde{r}}(z^n+th)(\delta_{\alpha n}(z^{\beta}+t\vec{y}^{\beta})+\delta_{\beta n}(z^{\alpha}+t\vec{y}^{\alpha}))+2\psi(\tilde{r})\delta_{\alpha n}\delta_{\beta n}\nonumber.
\end{eqnarray}
It is obvious that
$$\tilde{r}^2=r^2+t^2(h(\sin u_1))^2+2tr\sin u_1h(\sin u_1)$$ which depends only on $\sin u_1$ and $t$.  Since
$\sum_{\alpha}(z^{\alpha})^2=r,$ for any $i=1,\cdots, n-1$, we have
$$\sum_{\alpha}z^{\alpha}z^{\alpha}_i=0.$$
Hence we have
\begin{eqnarray}
II_{ij}&=&\int^1_0(1-t)h^2 \{[\frac{\psi'(\tilde{r})}{\tilde{r}}]'(z^n+t h)^2+\frac{\psi'(\tilde{r})}{\tilde{r}}\}t^2(\vec{y}^n)^2z^n_i z^n_jdt\nonumber\\
&&+4\int_0^1(1-t)h^2\frac{\psi'(\tilde{r})}{\tilde{r}}(z^n+th)t\vec{y}^nz^n_iz^n_jdt+2\int^1_0(1-t)h^2\psi(\tilde{r})z^n_iz^n_j dt\nonumber.
\end{eqnarray}
Thus $II$ is zero except $i=j=1$. We also have
\begin{eqnarray}
II_{11}&=&\int^1_0(1-t)t^2h^4 \{[\frac{\psi'(\tilde{r})}{\tilde{r}}]'(r\sin u_1+t h)^2+\frac{\psi'(\tilde{r})}{\tilde{r}}\}r^2\cos^2 u_1dt\nonumber\\
&&+4\int_0^1(1-t)th^3\frac{\psi'(\tilde{r})}{\tilde{r}}(r\sin u_1+th)r^2\cos^2u_1dt+2\int^1_0(1-t)h^2\psi(\tilde{r})r^2\cos^2u_1 dt\nonumber.
\end{eqnarray}
For the $III$, by \eqref{8.8}, we have
\begin{eqnarray}
&&III_{ij}\\
&=&\int^1_0h\p_n\sigma_{\alpha\beta}(\vec{y}+t\vec{y})(z^{\alpha}_{i}\vec{y}^{\beta}_{j}
+\vec{y}^{\alpha}_{i}z^{\beta}_{j}+\vec{y}^{\alpha}_{i}\vec{y}^{\beta}_{j})dt\nonumber\\
&=&\int^1_0h\frac{\psi'(\tilde{r})}{\tilde{r}}(z^n+t h)[t\vec{y}^n(z^n_i\vec{y}^n_j+z^n_j\vec{y}^n_i)(z^n+t\vec{y}^{n})+(z^n+t\vec{y}^n)^2\vec{y}^n_i\vec{y}^n_j]dt\nonumber\\
&&+\int^1_0h\psi(\tilde{r})[(z^n_i\vec{y}^n_j+z^n_j\vec{y}^n_i)(z_n+th+t\vec{y}^n)+2\vec{y}^n_i\vec{y}^n_j(z^n+t\vec{y}^n)]dt.\nonumber
\end{eqnarray}
Hence  except $i=j=1$, $III$ is zero. For $III_{11}$, we have
\begin{eqnarray}
&&III_{11}\\
&=&\int^1_0h\frac{\psi'(\tilde{r})}{\tilde{r}}(r\sin u_1+t h)^2[2trhh'\cos^2u_1+(r\sin u_1+th)(h')^2\cos^2u_1]dt\nonumber\\
&&+\int^1_0h\psi(\tilde{r})[2rh'\cos^2u_1(r\sin u_1+2th)+2(h')^2\cos^2 u_1(r\sin u_1+th)]dt.\nonumber
\end{eqnarray}
Let
\begin{eqnarray}\label{W}
&&W(r,\sin u_1,h)\\
&=&(r^2\psi(r)\sin^2u_1+1)(h')^2+\int^1_0(1-t)t^2h^4 \{[\frac{\psi'(\tilde{r})}{\tilde{r}}]'(r\sin u_1+t h)^2+\frac{\psi'(\tilde{r})}{\tilde{r}}\}r^2dt\nonumber\\
&&+4\int_0^1(1-t)th^3\frac{\psi'(\tilde{r})}{\tilde{r}}(r\sin u_1+th)r^2dt+2\int^1_0(1-t)h^2\psi(\tilde{r})r^2 dt\nonumber.\\
&&+\int^1_0h\frac{\psi'(\tilde{r})}{\tilde{r}}(r\sin u_1+t h)^2[2trhh'+(r\sin u_1+th)(h')^2]dt\nonumber\\
&&+\int^1_0h\psi(\tilde{r})[2rh'(r\sin u_1+2th)+2(h')^2(r\sin u_1+th)]dt,\nonumber
\end{eqnarray}
we have
\begin{eqnarray}
q_{ij}=\left\{\begin{matrix}-W(r,\sin u_1,h)\cos^2u_1, &\text{ if}  i=j=1\\ 0,& \text{ otherwise }\end{matrix}\right..
\end{eqnarray}
We choose the solution in the form \eqref{8.5}, then the equation \eqref{8.3} becomes
$$2\cos u_1 \frac{d \tilde{h}}{d u_1}=-W\cos^2 u_1.$$  Hence if we take
\begin{eqnarray}\label{8.14}
\tilde{h}(t)=-\frac{1}{2}\int_0^t W(r,s,h(s))ds,
 \end{eqnarray}
then the solution is $\tilde{h}(\sin u_1)$.
\end{proof}
In what follows we will apply Banach contraction mapping theorem to obtain the solution. Define a map
\begin{eqnarray}\label{8.15}
T: C^k([-1,1])&\rightarrow &C^k([-1,1])\\  h&\mapsto & -\dfrac{1}{2\epsilon^2}\int_0^tW(r,s,\epsilon+\epsilon^2h(s))ds\nonumber.
\end{eqnarray}
It is a well defined map. Denote
$$m(s)=2\int^1_0(1-t)\psi(\tilde{r}(s,t))r^2 dt, \text{ and } C_0=\|m\|_{C^k([-1,1])}.$$ If $\|h\|_{C^k}\leq 2C_0$, let's estimate
$Th$,
\begin{eqnarray}\label{8.16}
&&\|Th\|_{C^k}\\
&\leq& C\|\frac{W(r,s,\epsilon+\epsilon^2h(s))}{\epsilon^2}\|_{C^{k-1}}\nonumber\\
&\leq&C\epsilon^2\|h\|^2_{C^k}+C\epsilon^2[\|1+\epsilon h\|^4_{C^{k-1}}+\epsilon^2\|1+\epsilon h\|^6_{C^{k-1}}]\nonumber\\
&&+C\epsilon[\|1+\epsilon h\|^3_{C^{k-1}}+\epsilon\|1+\epsilon h\|^4_{C^{k-1}}]+C_0\|1+\epsilon h\|^2_{C^{k-1}}\nonumber\\
&&+C\epsilon^2(1+\epsilon^2\|1+\epsilon h\|^2_{C^{k-1}})\|1+\epsilon h\|^2_{C^{k-1}}\|h\|_{C^k}\nonumber\\
&&+C\epsilon^2(1+\epsilon^2\|1+\epsilon h\|^2_{C^{k-1}})(1+\|1+\epsilon h\|_{C^{k-1}})\|1+\epsilon h\|_{C^{k-1}}\|h\|^2_{C^k}\nonumber\\
&&+C\epsilon(1+\|1+\epsilon h\|_{C^{k-1}})\|1+\epsilon h\|_{C^{k-1}}\|h\|_{C^k}(1+\epsilon^2\|h\|_{C^k})\nonumber\\
&\leq&C_0(1+\epsilon^2\|h\|^2_{C^k})+C\epsilon(1+\|h\|^6_{C^k})\nonumber\\
&\leq &2C_0\nonumber,
\end{eqnarray}
for sufficient small $\epsilon\leq \epsilon_0$. It remains to prove that $T$ is a contraction map. For $h_1,h_2$ satisfying
$\|h_1\|_{C^k},\|h_2\|_{C^k}\leq 2C_0,$  we have
\begin{eqnarray}\label{8.17}
&&\|Th_1-Th_2\|_{C^k}\\
&\leq& \frac{C}{\epsilon^2}\|W(r,s,\epsilon+\epsilon^2h_1(s))-W(r,s,\epsilon+\epsilon^2 h_2(s))\|_{C^{k-1}}\nonumber\\
&\leq&C(C_0)\epsilon\|h_1-h_2\|_{C^k}\nonumber.
\end{eqnarray}
Hence once we choose sufficient small $\epsilon_0$ such that, for $\epsilon\leq \epsilon_0$,
$$C(C_0)\epsilon\leq C(C_0)\epsilon_0<1.$$ Thus $T$ is a contraction map. If we choose $h_0=0$, the sequence generated by
$h_{n+1}=T(h_n)$ converges to some function $h^*(s)$  with $\|h^*\|_{C^k}\leq 2C_0.$
Let some  $C^k$ hypersurface $Y$  be
\begin{eqnarray}\label{surY}
Y=\vec{r}+(\epsilon+\epsilon^2h^*(\sin u_1))\frac{\p}{\p z^n}.
\end{eqnarray}
and $$\vec{y}=(\epsilon+\epsilon^2h^*(\sin u_1))\frac{\p}{\p z^n}.$$ By Lemma \ref{lem10}, it is obvious that
$$\frac{\p}{\p u_i}\cdot_E\frac{\p \vec{y}}{\p u_j}+\frac{\p}{\p u_j}\cdot_E\frac{\p \vec{y}}{\p u_i}=q_{ij}(\vec{r},\vec{y}).$$ Hence we have
$$d\vec{r}\cdot D\vec{y}=q(\vec{r},\vec{y}).$$
As discussed in section $3$, we have $$dY\cdot dY=d\vec{r}\cdot d \vec{r}.$$
\begin{rema}For two dimensional surface case, the equation \eqref{8.1} always can be solvable by the discussion in section $3$. But for high dimension case, the equation  \eqref{8.1} cannot always be solved.
\end{rema}
Now we are in the position to give the proof of Theorem \ref{NR} for non rigidity of geodesic spheres.\\

\noindent{\bf Proof of Theorem \ref{NR}.}
The above calculation implies that the $C^k$ hypersurface defined by \eqref{surY} is isometric to the $r$ sphere. In fact, we can show that it is smooth. By \eqref{2.6} the Gauss equations for the hypersurfce is
$$\sigma_2(h_{ij})=\frac{(n-1)(n-2)}{2r^{n-1}}+\frac{\bar{R}}{2}-\bar{Ric}(\nu,\nu).$$ Using the orthonormal frame $\{u_1,\cdots,u_{n-1}\}$, the second fundamental form is
$$h_{ij}=\frac{1}{f\varphi}(-\rho_{i,j}+\frac{f_{\rho}}{f}\rho_i\rho_j+f^2g_{ij}),$$ where $\varphi=X\cdot \nu$ is support function. We can assume the hypersurface is at least $C^3$. By the standard regularity theory of the solutions to linear elliptic PDEs, we have the smoothness of $\rho$ which implies the smoothness of $h_{ij}$. Since we have $$Y_{i,j}=\Gamma_{ij}^kY_k+h_{ij}\nu,$$ we get the smoothness of $Y$.

Let's calculate the second fundamental form of the hypersurface defined by \eqref{surY}. Note that
$$\frac{\p}{\p z^n}=\sin\theta_1\frac{\p}{\p r}+\frac{\cos \theta_1}{r}\frac{\p}{\p \theta_1},\vec{r}=r\frac{\p}{\p r},$$ we have
\begin{eqnarray}\label{8.19}
&&\\
2\rho(Y)&=&Y\cdot_E Y\nonumber\\
&=&[r+(\epsilon+\epsilon^2h^*(\sin u_1))\sin u_1]^2+(\epsilon+\epsilon^2h^*(\sin u_1))^2\cos^2u_1\nonumber\\
&=&r^2+2r\epsilon \sin u_1+O(\epsilon^2),\nonumber
\end{eqnarray}
hence we have
$$|Y|=\sqrt{r^2+2r\epsilon \sin u_1+O(\epsilon^2)}=r+\epsilon \sin u_1+O(\epsilon^2).$$
Since $h^*$ is estimated independent of $\epsilon$, we can choose $\epsilon$ sufficient small such that the first order term of $\epsilon$ is dominate.
As known,
$$\rho_{i,j}=\p^2_{u_iu_j}\rho-(D_{\frac{\p}{\p u_i}}\frac{\p}{\p u_j})\rho,$$ we have
\begin{eqnarray}
&& \rho_{1,1}=-r\epsilon \sin u_1+O(\epsilon^2);\ \   \rho_{i,j}=O(\epsilon^2) \text{ for } i\neq j; \\
&& \rho_{i,i}=-r\epsilon \sin u_1\Pi_{j=1}^{i-1}\cos^2u_j+O(\epsilon^2) \text{ for } i\neq 1\nonumber.
\end{eqnarray}
A direct computation shows
\begin{eqnarray}
\rho_i\rho_j=O(\epsilon^2); |\nabla \rho|^2=O(\epsilon^2); \varphi=\sqrt{2\rho-\frac{|\nabla \rho|^2}{f^2(|Y|)}}=r+\epsilon\sin u_1+O(\epsilon^2).\nonumber
\end{eqnarray}
Taylor expansion implies
\begin{eqnarray}\label{8.20}
&&f(|Y|)=f(r)+f'(r)\epsilon\sin u_1+O(\epsilon^2),\nonumber\\
&&f^2(|Y|)=f^2(r)+2f(r)f'(r)\epsilon \sin u_1+O(\epsilon^2)\nonumber,\\
&& uf(|Y|)=rf(r)+(rf'(r)+f(r))\epsilon\sin u_1+O(\epsilon^2)\nonumber.
\end{eqnarray}
For $i\neq j$, we have $h_{ij}=O(\epsilon^2)$. For the diagonal term, we have
\begin{eqnarray}
h_{i}^i&=&g^{ii}h_{ii}=\frac{f^2(r)+(2f(r)f'(r)+\frac{1}{r})\epsilon\sin u_1+O(\epsilon^2)}{rf(r)+(rf'(r)+f(r))\epsilon\sin u_1+O(\epsilon^2)}\nonumber\\
&=&\frac{f(r)}{r}+(\frac{f(r)f'(r)}{r}+\frac{1-f^2(r)}{r^2})\frac{\epsilon\sin u_1}{f(r)}+O(\epsilon^2)\nonumber.
\end{eqnarray}
For sufficient small $\epsilon$, we see that the above principal curvature is not same as the slice sphere case. If every sphere is rigid, the warped function should satisfy
\begin{eqnarray}\label{sf}
\frac{f(r)f'(r)}{r}+\frac{1-f^2(r)}{r^2}=0,
\end{eqnarray}
for any $r.$ It is well known that the solutions to the above equation are only the warped functions defining space forms. We complete our proof. \\

A direct corollary of the above theorem is that A. Ros's type theorem \cite{R} is not true in general warped product space, namely
\begin{theo}
In $n$ dimensional warped product space, constant scalar curvature hypersurface is not always some round sphere. More precisely, if
 \eqref{sf} does not hold for some $r,$ there exists some perturbation convex body of such $r$ radius slice sphere with the same constant scalar curvature but different second fundamental form.
\end{theo}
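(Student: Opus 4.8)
The plan is to observe that essentially all the work has already been done in the proof of Theorem~\ref{NR}, and that the only new input needed is the fact that scalar curvature is an intrinsic Riemannian invariant. First I would recall the hypersurface $Y$ produced in \eqref{surY}: for any radius $r$ at which \eqref{sf} fails, the contraction mapping argument yields a smooth, strictly convex hypersurface $Y=\vec{r}+(\epsilon+\epsilon^2 h^*(\sin u_1))\frac{\p}{\p z^n}$ which is isometric to the $r$-radius slice sphere and whose principal curvatures satisfy
$$h_i^i=\frac{f(r)}{r}+\Bigl(\frac{f(r)f'(r)}{r}+\frac{1-f^2(r)}{r^2}\Bigr)\frac{\epsilon\sin u_1}{f(r)}+O(\epsilon^2).$$
Smoothness and strict convexity of $Y$ for small $\epsilon$ follow exactly as in the proof of Theorem~\ref{NR}; note in addition that the $O(\epsilon)$ part of the displacement is a pure translation, so $Y$ is a $C^k$-small perturbation of a translate of the slice sphere.

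Next I would invoke the intrinsic nature of scalar curvature: isometric Riemannian manifolds have the same scalar curvature, transported by the isometry. Since the $r$-radius slice sphere is, as an abstract Riemannian manifold, the round sphere $\mathbb{S}^{n-1}$ of radius $r$, its scalar curvature is the constant $\frac{(n-1)(n-2)}{r^2}$; hence $Y$, being isometric to it, also has constant scalar curvature $\frac{(n-1)(n-2)}{r^2}$, the same value as the slice sphere. Equivalently one may read this off \eqref{2.6}, writing $R=-2\sigma_2(h_{ij})+2\sum_{i<j}\bar R_{ijij}$ and checking that the $\epsilon$-linear variations of $\sigma_2(h_{ij})$ and of $\sum_{i<j}\bar R_{ijij}$ (the latter computed from the expansions of $(\nu^1)^2$ and $|Y|$ already obtained in the proof of Theorem~\ref{NR}) cancel; but the isometry argument makes this automatic.

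Finally I would compare second fundamental forms. For the $r$-radius slice sphere every principal curvature equals the constant $f(r)/r$ by \eqref{sphere}, whereas for $Y$ the displayed expansion shows $h_i^i$ is a non-constant function of $u_1$ already at first order in $\epsilon$, precisely because $\frac{f(r)f'(r)}{r}+\frac{1-f^2(r)}{r^2}\neq 0$. Hence for all sufficiently small $\epsilon>0$ the second fundamental form of $Y$ differs from that of every slice sphere, so $Y$ is a strictly convex body with constant scalar curvature $\frac{(n-1)(n-2)}{r^2}$ that is not a round sphere, which is the assertion of the theorem. There is no genuine obstacle here: the analytic content is entirely contained in Theorem~\ref{NR}, and the argument amounts to remarking that the quantity held fixed in that construction, namely the induced metric, already determines the scalar curvature.
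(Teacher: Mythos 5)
Your proposal is correct and follows essentially the same route as the paper: the paper's own proof is just the observation that the example from Theorem~\ref{NR} is isometric to the slice sphere, so the scalar curvatures agree because scalar curvature is determined by the metric, while the second fundamental forms differ by the expansion already computed there. Your additional remarks (the explicit value $\frac{(n-1)(n-2)}{r^2}$ and the cross-check via \eqref{2.6}) are consistent elaborations, not a different argument.
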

\begin{proof}
 The example in the theorem \ref{NR}  is isometric to the slice sphere, hence
their scalar curvatures are same, since scalar curvature only determined by metric. \end{proof}
\begin{rema}
We also can prove that $Y$ defined by \eqref{surY} has no same second fundamental form as a slice sphere in some cases by S. Brendle's theorem \cite{B2}. If $Y$ has  same second fundamental form as a slice sphere, its mean curvature should be constant, hence it must be a slice sphere by \cite{B2}. Obviously, $Y$ is not slice sphere.
\end{rema}
The uniqueness of the isometric embedding convex hypersurfaces are true in space form, \cite{GS}, \cite{D}. For the high dimensional case, in fact, it's an algebraic result. It can be roughly viewed in the following. Assume $\{e_1,\cdots,e_{n-1}\}$ are orthonormal frame on sphere and moreover diagonalize the second fundamental form, Gauss equation says, for $i\neq j$, $$\lambda_i\lambda_j=c+\bar{R}_{ijij},$$  where $\lambda_i$ is the principal curvature and  $R$ is the curvature tensor. Hence, for three distinct indices $i,j,k$, we have
$$\lambda_i=\sqrt{\frac{(c+\bar{R}_{ijij})(c+\bar{R}_{ikik})}{c+\bar{R}_{jkjk}}}.$$ Hence $\lambda_i$ is totally determined by metric and then we have the uniqueness. The uniqueness in  high dimensional case is a local property, since we have more Gauss equations. But Theorem \ref{NR} tells us that these more algebraic relations are not helpful in warped product space. It is a little surprising to us.

\section{ A rigidity theorem for sphere}
In the last section  some counterexample is constructed for the rigidity problem of isometric embedding. In this section, we try to recover the rigidity of spheres under the condition \eqref{Cond} which exhibits rotational symmetry and restricts "moving" directions hence it is posed to fit the isometry of the ambient space.

 For any vector fields $X,Y$ on the manifold $\Sigma$, we also define its Reimannian curvature tensor,
 $$R(X,Y)=\nabla_X\nabla_Y-\nabla_Y\nabla_X-\nabla_{[X,Y]}.$$  Suppose $\{e_i\}_{i=1}^{n-1}$ are orthonormal frame on $\Sigma,$  and $e_n$ is its unit outward normal. Let
 $$R_{ijkl}=R(e_i,e_j)e_k\cdot e_l,$$ then the Ricci curvature and scalar curvature are
 $$R_{ij}=g^{lk}R_{ilkj}=\sum_kR_{ikkj},\ \  R=g^{ij}R_{ij}=\sum_iR_{ii}.$$ The Einstein condition means
 \begin{eqnarray}\label{EM}
 R_{ij}=(n-2)g_{ij}=(n-2)\delta_{ij}.
 \end{eqnarray}

At first, we need some Poinc\'are type inequality. By Linchnerowicz's theorem \cite{Ob}, for Einstein metric defined by \eqref{EM},
 the first eigenvalue $\lambda_1$ of  the Laplacian with respect to the metric $g$ on manifold $\Sigma$ is not less than $n-1$.  Hence, for any smooth function $\chi$ on  $\Sigma$ satisfying
$$\int_{\Sigma}\chi d\sigma=0,$$ where $d\sigma$ is the volume element of $\Sigma,$ we have the following Poinc\'are type inequality
$$(n-1)\int_{\Sigma}\chi^2 d\sigma\leq \lambda_1\int_{\Sigma}\chi^2 d\sigma\leq \int_{\Sigma}|\nabla \chi|^2 d\sigma. $$

We still use $\ast$ to denote the inner product of Euclidean space. Let $E$ be some constant unit vector field, then \eqref{Cond} becomes
$$\int_{\Sigma}\vec{r}\ast E d\sigma=0. $$
If the metric on $\Sigma$ is Einstein, by Poinc\'are type inequality we have
 $$\int_{\Sigma}|\vec{r}\ast E|^2d\sigma\leq \frac{1}{n-1}\int_{\Sigma} \sum_i|e_i(\vec{r}\ast E)|^2d\sigma=\frac{1}{n-1}\int_{\Sigma} \sum_i|e_i\ast E|^2d\sigma.$$ Let $\{z^1,z^2,\cdots, z^n\}$ be the Euclidean coordinate for $\mathbb{R}^n$, then we have,
 \begin{eqnarray}\label{9.2}
 \int_{\Sigma}r^2d\sigma=\int_{\Sigma}\sum_j|\vec{r}\ast \frac{\p}{\p z^j}|^2d\sigma\leq \frac{1}{n-1}\int_{\Sigma} \sum_i|e_i|^2_Ed\sigma.
 \end{eqnarray}
 Let's calculate the Euclidean norm of $e_i$ using polar coordinate. Suppose $\{ E_1, E_2, \cdots, $ $E_n\}$ are the orthonormal frame of the warped metric defined in section $2$. We let
 $$\tilde{E}_1=\frac{E_1}{f}, \tilde{E}_2=E_2,\cdots \tilde{E}_n=E_n,$$ be some orthonormal frame with respect to Euclidean metric. We present the vector $e_i$ in the two different frames with the coefficient $a_i^m$ and $\tilde{a}_i^m$,
 $$e_i=\sum_ma^m_iE_m=\sum_m\tilde{a}^m_i\tilde{E}_m.$$ Hence we have
 $$\tilde{a}^1_i=fa^1_i, \tilde{a}_i^k=a^k_i, \text{ for } k=2,\cdots, n.$$ Since $e_i$ is unit with respect to the warped product metric, we have $\sum_m(a^m_i)^2=1$. Hence we obtain
 $$|e_i|_E^2=\sum_m(\tilde{a}^m_i)^2=(\tilde{a}^1_i)^2+1-(a^1_i)^2=1+(f^2-1)(a^1_i)^2.$$
We also have
$$a_i^1=e_i\cdot E_1=\frac{e_i\cdot X}{r},$$ where $X$ is the conformal Killing vector. Let $2\rho=r^2=X \cdot X$, then we get $$\rho_i=fe_i\cdot X=rf a^1_i.$$  Hence we obtain 
$$|e_i|_E^2=1+\frac{f^2-1}{2\rho f^2}\rho_i^2.$$ Thus \eqref{9.2} becomes
\begin{eqnarray}\label{9.3}
\int_{\Sigma}2\rho d\sigma\leq \int_{\Sigma}d\sigma+\int_{\Sigma} \frac{f^2-1}{2(n-1)\rho f^2}|\nabla\rho|^2d\sigma.\end{eqnarray}
In what follows, we will try to find another integral equality by Darboux equation. Using the orthonormal frame, by \eqref{2.6} we have
$$\sigma_2 (h)=\frac{(n-1)(n-2)}{2}+\sum_{i<j}\bar{R}_{ijij}.$$
 where the last term  is the curvature of the ambient space defined by \eqref{2.7}. Recall that $\varphi=X\cdot \nu$ is the support function of the convex surface, then we have
$$\varphi^2=2\rho-\frac{|\nabla \rho|^2}{f^2},\text{ and } (\nu^1)^2=(\nu\cdot E_1)^2=\frac{\varphi^2}{r^2}=1-\frac{|\nabla \rho|^2}{2\rho f^2}.$$ By \eqref{2.7}, we have
\begin{eqnarray}\label{9.4}
&&\frac{1}{n-2}\sigma_2 (h)\\
&=&\frac{n-1}{2}+ff_{\rho}+\frac{n-3}{2}\frac{f^2-1}{2\rho}-(1-\frac{|\nabla \rho|^2}{2\rho f^2})(ff_{\rho}+\frac{1-f^2}{2\rho})\nonumber\\
&=&\frac{n-1}{2}+\frac{n-1}{2}\frac{f^2-1}{2\rho}+\frac{f_{\rho}}{2\rho f}|\nabla \rho|^2+\frac{1-f^2}{4\rho^2f^2}|\nabla \rho|^2\nonumber,
\end{eqnarray}
 where we have used $$\frac{f'}{r}=f_{\rho}, \text{ and } 2\rho=r^2.$$ Denote $$w_{ij}=-\rho_{i,j}+\frac{f_{\rho}}{f}\rho_i\rho_j+f^2\delta_{ij}.$$  By \eqref{2.9}, we have $$h_{ij}=\frac{w_{ij}}{f\varphi}.$$
Then, by \eqref{9.4}, we have
\begin{eqnarray}\label{9.5}
\frac{1}{n-2}\frac{\sigma_2(w)}{f^2}&=&\varphi^2\left(\frac{n-1}{2}+\frac{1}{n-2}\sum_{i<j}\bar{R}_{ijij}\right)\\
&=&(n-1)\rho+\frac{n-1}{2}(f^2-1)+\frac{f_{\rho}}{f}|\nabla \rho|^2+\frac{1-f^2}{2\rho f^2}|\nabla \rho|^2\nonumber\\
&&-\left(\frac{n-1}{2}+\frac{1}{n-2}\sum_{i<j}\bar{R}_{ijij}\right)\frac{|\nabla \rho|^2}{f^2}.\nonumber
\end{eqnarray}
Let's calculate the left hand side.
Obviously, we have
\begin{eqnarray}
\sigma_2 (w)&=&\frac{1}{2}\sigma_2^{ij}(-\rho_{i,j}+\frac{f_{\rho}}{f}\rho_i\rho_j+f^2\delta_{ij})\nonumber\\
&=&\frac{1}{2}\sigma_2^{ij}(-\rho_{i,j})+\frac{f_{\rho}}{2f}\sigma_2^{ij}\rho_i\rho_j+\frac{(n-2)f^2}{2}(-\Delta \rho+\frac{f_{\rho}}{f}|\nabla \rho|^2+(n-1)f^2),\nonumber
\end{eqnarray}
hence we get
\begin{eqnarray}\label{9.6}
&&\int_{\Sigma}\frac{\sigma_2 (w)}{f^2}\\
&=&\int_{\Sigma}\left(\frac{\sigma_2^{ij} }{2f^2}\right)_{,j}\rho_i+\int_{\Sigma}\frac{f_{\rho}}{2f^3}\sigma_2^{ij}\rho_i\rho_j+\frac{n-2}{2}\int_{\Sigma}\frac{f_{\rho}}{f}|\nabla \rho|^2+\frac{(n-1)(n-2)}{2}\int_{\Sigma} f^2\nonumber\\
&=&\int_{\Sigma}\frac{1}{2f^2}(\sigma_2^{ij})_{,j}\rho_i-\int_{\Sigma}\frac{f_{\rho}}{2f^3}\sigma_2^{ij}\rho_i\rho_j+\frac{n-2}{2}\int_{\Sigma}\frac{f_{\rho}}{f}|\nabla \rho|^2+\frac{(n-1)(n-2)}{2}\int_{\Sigma} f^2\nonumber.
\end{eqnarray}
Let's calculate the first term of the above equality. We can rotate our frame to diagonalize the matrix $w_{ij}$,  then we have
\begin{eqnarray}\label{9.7}
(\sigma_2^{ij})_{,j}\rho_i=\sigma_2^{ij,pq}w_{pq,j}=\sum_{j\neq i}\rho_i(w_{jji}-w_{ijj}).
\end{eqnarray}
It is obvious that
\begin{eqnarray}\label{9.8}
w_{bja}-w_{baj}=\rho_{baj}-\rho_{bja}+\frac{f_{\rho}}{f}(\rho_{ba}\rho_j-\rho_{bj}\rho_a)+2ff_{\rho}(\rho_a\delta_{bj}-\rho_j\delta_{ab}).
\end{eqnarray}
By Ricci identity, we have
\begin{eqnarray}\label{9.9}
\rho_{baj}-\rho_{bja}=-\sum_c\rho_cR_{ajbc}.
\end{eqnarray}
By the definition of $w_{ij}$, we get
\begin{eqnarray}\label{9.10}
&&\frac{f_{\rho}}{f}(\rho_{ba}\rho_j-\rho_{bj}\rho_a)\\
&=&\frac{f_{\rho}}{f}[\rho_j(-w_{ba}+\frac{f_{\rho}}{f}\rho_b\rho_a+f^2\delta_{ba})-\rho_a(-w_{bj}+\frac{f_{\rho}}{f}\rho_b\rho_j+f^2\delta_{bj})]\nonumber.
\end{eqnarray}
Combing \eqref{9.8}-\eqref{9.10}, we obtain
\begin{eqnarray}
&&w_{bja}-w_{baj}\\
&=&\sum_c\rho_cR_{ajbc}+\frac{f_{\rho}}{f}(\rho_aw_{bj}-\rho_jw_{ba})+ff_{\rho}(\rho_a\delta_{bj}-\rho_j\delta_{ab}).\nonumber
\end{eqnarray}
Hence combing \eqref{9.7} and Einstein condition, we have
\begin{eqnarray}
(\sigma_2^{ij})_{,j}\rho_i&=&\sum_{i,c}\sum_{j\neq i}\rho_i\rho_cR_{ijjc}+\frac{f_{\rho}}{f}\sigma_2^{ij}\rho_i\rho_j+(n-2)ff_{\rho}|\nabla \rho|^2\nonumber\\
&=&(n-2)|\nabla \rho|^2+\frac{f_{\rho}}{f}\sigma_2^{ij}\rho_i\rho_j+(n-2)ff_{\rho}|\nabla \rho|^2.\nonumber
\end{eqnarray}
Thus, combing the above equality and \eqref{9.6}, we have
\begin{eqnarray}
\int_{\Sigma}\frac{\sigma_2 (w)}{f^2}&=&\frac{n-2}{2}\int_{\Sigma}\frac{|\nabla \rho|^2}{f^2}+(n-2)\int_{\Sigma}\frac{f_{\rho}}{f}|\nabla \rho|^2+\frac{(n-1)(n-2)}{2}\int_{\Sigma} f^2\nonumber.
\end{eqnarray}
Using \eqref{9.5} and the above equality, we have
\begin{eqnarray}\label{9.12}
\int_{\Sigma}2\rho d\sigma&=&\int_{\Sigma}d\sigma+\int_{\Sigma}\frac{|\nabla \rho|^2}{(n-1)f^2}d\sigma+\int_{\Sigma}\frac{f^2-1}{(n-1)\rho f^2}|\nabla \rho|^2d\sigma\\
&&+\int_{\Sigma} \left(1+\frac{2}{(n-1)(n-2)}\sum_{i<j}\bar{R}_{ijij}\right)\frac{|\nabla \rho|^2}{f^2}d\sigma.\nonumber
\end{eqnarray}
Combing \eqref{9.3} and \eqref{9.12}, we obtain
\begin{eqnarray}\label{9.13}
0&\geq &\int_{\Sigma}\frac{|\nabla \rho|^2}{(n-1)f^2}d\sigma+\int_{\Sigma}\frac{f^2-1}{2(n-1)\rho f^2}|\nabla \rho|^2d\sigma\\
&&+\int_{\Sigma} \left(1+\frac{2}{(n-1)(n-2)}\sum_{i<j}\bar{R}_{ijij}\right)\frac{|\nabla \rho|^2}{f^2}d\sigma.\nonumber
\end{eqnarray}
Now we are in the position to prove Theorem \ref{RC} .\\

\noindent {\bf Proof of Theorem \ref{RC}.}
Let's define some function depending on $\rho$
$$\phi(\rho)=2\rho+f^2-1.$$
Since function $\rho$ is defined on some convex body, $\rho$ should vary between its minimum value and maximum value. Denote the two values by $\rho_{\min},\rho_{\max},$ then $\rho_{\min}\leq \rho\leq \rho_{\max}$.
It is obvious that $$\phi_{\rho}=2(1+ff_{\rho}).$$
By \eqref{9.4}, we have
\begin{eqnarray}\label{9.14}
\frac{1}{n-2}\sigma_2 (h)=\frac{n-1}{2}\frac{\phi(\rho)}{2\rho}+\frac{\phi_{\rho}(\rho)}{4\rho f^2}|\nabla \rho|^2-\frac{\phi(\rho)}{4\rho^2f^2}|\nabla \rho|^2.
\end{eqnarray}
By the assumption of  convexity, the right hand side of the above equality is always positive. At the minimum point of $\rho$, $\nabla \rho=0,$ we have $\phi(\rho_{\min})>0$.
We claim $\phi$ is always positive between $\rho_{\min}$ and $\rho_{\max}$. If it is not true, let  $\rho_0$ be the first zero of $\phi$ from $\rho_{\min},$ then, at $\rho_0$,
$$\phi_{\rho}\leq 0.$$  By \eqref{9.14}, $\sigma_2 (h)\leq 0$ at $\rho=\rho_0$, since $ \phi_{\rho}\leq 0 $ and $\phi= 0,$ which contradicts to $\sigma_2 (h)>0$. Thus we always have $\phi>0$.

Note that \eqref{9.13} can be rewritten as
\begin{eqnarray}\label{9.15}
0\geq \int_{\Sigma}(\frac{\phi(\rho)}{2(n-1)\rho}+\frac{2\sigma_2(h)}{(n-1)(n-2)})\frac{|\nabla \rho|^2}{f^2}.
\end{eqnarray}
By the positivity of $\phi(\rho)$ and $\sigma_2(h)$, we obtain  $\nabla \rho=0.$ Hence $\rho$ is a constant.  Using \eqref{9.12} we have $2\rho=1$ which implies the embedded convex body is a unit geodesic sphere. We complete our proof. \\

In three dimension warped product space, obviously, we can have some statement about constant Gauss curvature.
\begin{coro}
In three dimensional warped product space, the only possible embedded convex body of the constant scalar curvature is the slice sphere provided that the embedded surface  satisfies condition \eqref{Cond}.
\end{coro}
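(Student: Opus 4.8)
The plan is to obtain this statement as a direct specialization of Theorem~\ref{RC} to the case $n=3$; the only work is to verify that each hypothesis of that theorem is present. First I would note that in dimension two the Einstein condition is essentially vacuous: the intrinsic curvature of the embedded surface $\Sigma$ satisfies $R_{ij}=Kg_{ij}$ with $K$ the Gauss curvature, and the scalar curvature is $R=2K$, so a surface of constant scalar curvature automatically has constant Gauss curvature and is Einstein. After the harmless rescaling of the induced metric that makes $K=1$ (equivalently, carrying the Darboux computation \eqref{9.4}--\eqref{9.13} through with a general Einstein constant in place of $n-2=1$), the metric satisfies \eqref{EM}.

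Next I would dispose of the topological hypothesis. The surface $\Sigma$ bounds a convex body, hence is a topological $2$-sphere, which is the manifold $M$ appearing in Theorem~\ref{RC}; Gauss--Bonnet then gives $\int_{\Sigma}K\,d\sigma=4\pi$, so the constant Gauss curvature is positive, i.e. $\Sigma$ carries a positive constant scalar curvature Einstein metric as required.

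Finally I would match the remaining geometric hypotheses. In dimension two $\sigma_2$-convexity means $\sigma_2(\kappa_1,\kappa_2)=\kappa_1\kappa_2=\det h/\det g>0$, which is exactly the strict convexity of $\Sigma$; star-shapedness with respect to the base point of the warped product is automatic for a convex body containing that point, and condition~\eqref{Cond}, which forces the base point to be the barycenter of $\Sigma$, guarantees this. With all hypotheses of Theorem~\ref{RC} verified, that theorem gives at once that $\Sigma$ is a slice sphere.

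There is no genuine obstacle here: the corollary is truly a special case of Theorem~\ref{RC}. The one point deserving a line of care is the normalization implicit in Section~9, where Lichnerowicz's estimate $\lambda_1\ge n-1$ is applied with the normalized Einstein constant $n-2=1$; for a surface of constant scalar curvature $2/r_0^2$ one either rescales to $K=1$ or repeats the integral identities \eqref{9.3}, \eqref{9.12} and \eqref{9.13} without normalizing, which then identifies $\Sigma$ as the slice sphere of radius $r_0$.
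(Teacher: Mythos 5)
Your proposal is correct and takes essentially the same route as the paper, whose proof consists of the single observation that constant scalar curvature forces the induced metric on $\mathbb{S}^2$ to be the standard (hence Einstein, positive curvature) metric, followed by an application of Theorem~\ref{RC}. Your additional care about the normalization $R_{ij}=(n-2)g_{ij}$ (where rerunning \eqref{9.3}--\eqref{9.13} with a general Einstein constant is the right fix, since the ambient warped space has no scaling symmetry to make a metric rescaling literally ``harmless'') and about star-shapedness via condition \eqref{Cond} merely fills in details the paper leaves implicit.
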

\begin{proof}
Since constant scalar curvature implies the metric on $\mathbb{S}^2$ is the standard metric, an application of Theorem \ref{RC} immediately yields the corollary.
\end{proof}
If the warped function $f=1$, namely  Euclidean space, the above corollary is a classical rigidity theorem of two dimensional sphere.

\section{A Shi-Tam type inequality}
In this section, we try to generalize quasi-local mass in asymptotic Euclidean space, namely the case
$$f(r)=\sqrt{1-\frac{m}{r}}.$$ Assume the $\Omega$ be a domain bounded by the surface $\Sigma$ and $\Omega$ have nonnegative scalar curvature. Denote the mean curvature of $\Sigma$ in $\Omega$ by $H_1$. Furthermore, $\Sigma$ can be isometrically embedded in asymptotic Euclidean space as some surface $\Sigma_0$ containing the black hole. Now we consider the geodesic flow
\begin{eqnarray}
\left\{\begin{matrix}\label{10.1}
\dfrac{d \Phi(t,\cdot)}{dt}\ \ =\ \ &\nu&, \text{ if } t\in (0,+\infty)\\
\Phi(0,\cdot)\ \  = &\Sigma_0&
\end{matrix}\right.,
\end{eqnarray}
where $\Phi(t,\cdot)$ is the position vector and $\nu$ is the outward normal vector of the surface $\Phi(t,\cdot)$. Hence, we have some foliation of the space outside the $\Sigma_0$. The ambient space metric can be rewritten by
$$ds_0^2=dt^2+g_t,$$ where $g_t$ is the metric on $\Sigma_t$. We try to find  some conformal metric
$$ds^2=u^2dt^2+g_t $$ with the same scalar curvature as $ds_0^2$.  Denote  the mean curvature of the surface $\Sigma_t$ in $ds^2_0$ and $ds^2$ by $H_0$ and $H$.  Then we have the equation satisfied by $u$ \cite{ST1},
\begin{eqnarray}
\left\{\begin{matrix}\label{u}
H_0\dfrac{\p u}{\p t}\ \ =\  \  &u^2\Delta_t u&+\dfrac{u-u^3}{2}R^t, \text{ if }t\in (0,+\infty)\\
u(0)\ \ =&H_0/H_1&
\end{matrix}\right.,
\end{eqnarray}
where $R^t$ is the scalar curvature of surface $\Sigma_t$.

The following explicit formula for sectional curvatures is useful in this section.

\begin{lemm}
For any two vectors $Z_1,Z_2$ in $(\mathbb{R}^3,ds^2),$  we have
\begin{eqnarray}\label{sec}
\bar{R}(Z_1,Z_2,Z_1,Z_2)
=\frac{m}{2r^3}[|Z_1\times Z_2|^2-3(Z_1\times Z_2\cdot E_1)^2].
\end{eqnarray}
\end{lemm}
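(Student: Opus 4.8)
The plan is to reduce \eqref{sec} to the componentwise curvature identities \eqref{curv}, specialized to $n=3$ and to the warped function $f=\sqrt{1-m/r}$, and then to repackage the answer using the cross product induced by $ds^2$.

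First I would record the relevant scalars. From $f^2=1-\dfrac{m}{r}$ we get $2ff'=\dfrac{m}{r^2}$, hence
\begin{eqnarray}
\frac{ff'}{r}=\frac{m}{2r^3},\qquad \frac{f^2-1}{r^2}=-\frac{m}{r^3}.\nonumber
\end{eqnarray}
By \eqref{curv}, in the orthonormal frame $\{E_1,E_2,E_3\}$ of \eqref{2.2} the only components of $\bar R$ that do not vanish (up to the symmetries of a curvature tensor) are
\begin{eqnarray}
\bar R_{1212}=\bar R_{1313}=\frac{m}{2r^3},\qquad \bar R_{2323}=-\frac{m}{r^3},\nonumber
\end{eqnarray}
since every component with three distinct indices is zero in three dimensions by \eqref{curv}.

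Next I would expand $Z_1=\sum_i a_iE_i$ and $Z_2=\sum_i b_iE_i$. Because only the components $\bar R_{ijij}$ with $i\neq j$ survive, the antisymmetries $\bar R_{ijkl}=-\bar R_{jikl}=-\bar R_{ijlk}$ collapse the full contraction $\sum a_ib_ja_kb_l\bar R_{ijkl}$ to
\begin{eqnarray}
\bar R(Z_1,Z_2,Z_1,Z_2)=\sum_{i<j}\bar R_{ijij}\,(a_ib_j-a_jb_i)^2.\nonumber
\end{eqnarray}
Now if $Z_1\times Z_2=\sum_k c_kE_k$ is the cross product of \eqref{3.8} (for the orientation making $\{E_1,E_2,E_3\}$ positively oriented), then $c_1=a_2b_3-a_3b_2$, $c_2=a_3b_1-a_1b_3$, $c_3=a_1b_2-a_2b_1$; since the frame is orthonormal, $c_k=(Z_1\times Z_2)\cdot E_k$ and $|Z_1\times Z_2|^2=c_1^2+c_2^2+c_3^2$ by \eqref{3.8}. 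Substituting the curvature values,
\begin{eqnarray}
\bar R(Z_1,Z_2,Z_1,Z_2)&=&\frac{m}{2r^3}\bigl(c_2^2+c_3^2\bigr)-\frac{m}{r^3}c_1^2\nonumber\\
&=&\frac{m}{2r^3}\bigl(c_1^2+c_2^2+c_3^2-3c_1^2\bigr),\nonumber
\end{eqnarray}
which is exactly \eqref{sec} after writing $c_1^2+c_2^2+c_3^2=|Z_1\times Z_2|^2$ and $c_1=(Z_1\times Z_2)\cdot E_1$.

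The argument is essentially a routine verification; the only points deserving care are the specialization of \eqref{curv} to $n=3$ (so that $E_2,E_3$ are the only tangential-sphere directions, $\bar R_{2323}$ is the unique $f^2-1$ term, and $\bar R_{1212}$, $\bar R_{1313}$ are the $ff'/r$ terms) and the bookkeeping that identifies $a_ib_j-a_jb_i$ with the components of $Z_1\times Z_2$. A sign slip in the latter only changes a quantity that is ultimately squared, so it is harmless, but it is cleanest to fix the orientation once at the outset.
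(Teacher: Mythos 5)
Your proposal is correct and follows essentially the same route as the paper: expand $Z_1,Z_2$ in the orthonormal frame $\{E_1,E_2,E_3\}$, reduce the curvature to $\sum_{i<j}\bar R_{ijij}(a_ib_j-a_jb_i)^2$ using \eqref{curv}, substitute $\bar R_{1212}=\bar R_{1313}=\frac{m}{2r^3}$, $\bar R_{2323}=-\frac{m}{r^3}$, and identify the antisymmetric combinations with the components of $Z_1\times Z_2$. No gaps; your remark fixing the orientation at the outset is a minor tidy-up of the same computation.
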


\begin{proof}
 Let's calculate the sectional curvature in detail.
Assume that $$Z_1=\sum_ia^iE_i, \text{ and } Z_2=\sum_jb^jE_j,$$ where $E_i$ is defined in \eqref{2.2} and $a^i,b^i$ are component of $Z_1, Z_2$. Hence by \eqref{curv}, we have
\begin{eqnarray}
\bar{R}(Z_1,Z_2,Z_1,Z_2)&=&\sum_{i,j,k,l}a_ia_kb_jb_l\bar{R}_{ijkl}=\sum_{i,j}((a^i)^2(b^j)^2-a^ia^jb^ib^j)\bar{R}_{ijij}\nonumber\\
&=&\sum_{i<j}(a^ib^j-a^jb^i)^2\bar{R}_{ijij}.\nonumber
\end{eqnarray}
It is obvious that
$$\bar{R}_{1212}=\bar{R}_{1313}=\frac{ff'}{r}=\frac{m}{2r^3},\text{ and } \bar{R}_{2323}=\frac{f^2-1}{r^2}=-\frac{m}{r^3}.$$
Hence we have
\begin{eqnarray}
\bar{R}(Z_1,Z_2,Z_1,Z_2)&=&\frac{m}{2r^3}\sum_{i<j}(a^ib^j-a^jb^i)^2+(a^2b^3-a^3b^3)^2(\bar{R}_{2323}-\frac{m}{2r^3})\nonumber\\
&=&\frac{m}{2r^3}[|Z_1\times Z_2|^2-3(Z_1\times Z_2\cdot E_1)^2]\nonumber.
\end{eqnarray}

\end{proof}

Using the above explicit formula, we can conclude that the geodesic flow preserves convexity.
\begin{lemm}\label{cp}
If the metric on the strictly convex $\Sigma_0$ is sufficiently close to a canonical sphere metric, then for any $t>0,$ every level
surface $\Sigma_t$ is always strictly convex.
\end{lemm}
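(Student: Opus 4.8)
The plan is to convert the statement into a stability result for the matrix Riccati equation governing the second fundamental form along the normal geodesics $\Phi(t,\cdot)$. Writing the ambient metric in the flow gauge as $ds_0^2 = dt^2 + g_t$ (as in \eqref{10.1}) and letting $A(t)=(h_t)_i^{\,j}$ be the Weingarten operator of $\Sigma_t$, a standard computation gives the evolution
\[
\frac{\p}{\p t}A = -A^2 + \mathcal{K}_\nu,\qquad \langle \mathcal{K}_\nu(X),X\rangle = \bar{R}(X,\nu,X,\nu)\ \ \text{for }X\perp\nu ,
\]
which, crucially, contains no tangential derivatives: it is an ODE along each geodesic of the foliation, coupled to neighbouring geodesics only through the lower-order dependence of $\nu$, and hence of $\mathcal{K}_\nu$, on the surface. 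The inhomogeneous term is controlled by \eqref{sec} (equivalently \eqref{curv}): for the Schwarzschild warped function $f=\sqrt{1-m/r}$ one has $ff'/r = m/(2r^3)>0$ and $(f^2-1)/r^2 = -m/r^3<0$, so \emph{every} $2$-plane containing the radial direction $E_1$ has sectional curvature exactly $m/(2r^3)>0$, and only the purely tangential plane carries negative curvature.

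First I would record the model solution: by \eqref{sphere} the coordinate sphere $S_r$ has $A = \tfrac{f(r)}{r}\,\mathrm{Id}$, and under the geodesic flow $\dot r=f$, so $\lambda_0(t):=f(r(t))/r(t)$ solves the scalar Riccati equation $\dot\lambda_0 = ff'/r-\lambda_0^2$ and stays strictly positive (indeed $\lambda_0(t)\sim 1/r(t)\to 0^+$) for all $t$. The heart of the argument is then a continuity/bootstrap in $t$. Since the induced metric on $\Sigma_0$ is $C^2$-close to a round metric and $\Sigma_0$ is strictly convex, the embedded $\Sigma_0$ is $C^2$-close to a coordinate sphere $S_{r_0}$; let $[0,T)$ be the maximal interval on which $\Sigma_t$ is $2\varepsilon$-close in $C^2$ to some coordinate sphere $S_{r(t)}$. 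On $[0,T)$ the outward normal of $\Sigma_t$ is $C^1$-close to $E_1$, hence every $2$-plane through $\nu$ is close to a radial plane and $\mathcal{K}_\nu$ is positive definite with all eigenvalues within $O(\varepsilon)$ of $m/(2r^3)$. Feeding the favourable sign of $\mathcal{K}_\nu$ into the Riccati equation and comparing, via the maximum principle for the least eigenvalue of the symmetric operator $A(t)$, against sub/super-solutions built from the scalar equation, one obtains $\lambda_{\min}(A(t))\ge c\,f(r(t))/r(t)>0$ throughout $[0,T)$; in particular $\Sigma_t$ stays strictly convex. Simultaneously $\partial_t g_t = 2h_t$ with $h_t$ now two-sidedly bounded in terms of $f(r(t))/r(t)$, which together with the evolution of $\rho=\tfrac12 r^2$ along the flow (using $\rho_{i,j}$ from \eqref{2.9}) keeps $\rho$ nearly constant on each $\Sigma_t$ and the induced metric nearly round; so the bootstrap hypothesis is recovered with $\varepsilon$ in place of $2\varepsilon$, forcing $T=\infty$.

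The main obstacle is the global-in-time part of this bootstrap: because $\Sigma_t$ escapes to spatial infinity, the natural damping rate in the stability estimate is only the polynomial $f(r(t))/r(t)\sim 1/t$, not exponential, so one must verify that the accumulated deviation of $\Sigma_t$ from a coordinate sphere never outgrows the shrinking principal curvatures and flips the sign of $\mathcal{K}_\nu$. This is exactly where \eqref{sec} is used decisively: the curvature term is not merely small but has the favourable sign $m/(2r^3)>0$ for all nearly-radial normals, so it pushes the principal curvatures up rather than down, and the asymptotic flatness ($\mathcal{K}_\nu=O(r^{-3})$, decaying faster than $\lambda_0^2=O(r^{-2})$) makes the comparison with the model flow robust for large $r$, where the equation is essentially $\dot A=-A^2$. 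The remaining ingredients — the Schauder/Codazzi-type estimates needed to upgrade "$A$ bounded and close to $\tfrac{f}{r}\mathrm{Id}$'' to "$g_t$ is $C^2$-close to a round metric'' — are routine and parallel to the a priori estimates of Section 4.
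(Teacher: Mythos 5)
Your proposal has the right skeleton — the Riccati equation $\dot A=-A^2+\mathcal{K}_\nu$ along the normal geodesics, the explicit formula \eqref{sec} showing that planes containing the radial direction have positive sectional curvature $m/(2r^3)$, and the reduction of positivity of $\mathcal{K}_\nu$ to controlling the misalignment $|\nu\times E_1|$ — but it has a genuine gap where you yourself flag "the main obstacle." Your plan is to propagate $C^2$-closeness to coordinate spheres by a bootstrap, but you never actually show the accumulated drift of $\nu$ away from $E_1$ stays small; given the polynomial decay rates you correctly note, this is not at all routine, and the sentence beginning "This is exactly where \eqref{sec} is used decisively\ldots" does not supply the needed estimate. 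Without that, the bootstrap interval $[0,T)$ is not shown to be all of $[0,\infty)$.

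The paper avoids the bootstrap entirely by observing a conservation law you missed. Since $X=rf\,\partial_r$ is conformal Killing with $D_YX=fY$, one has $\varphi_t=D_\nu X\cdot\nu=f$ and $\rho_t=X\cdot D_\nu X=f\varphi$, hence
\[
(r^2-\varphi^2)_t=2f\varphi-2f\varphi=0 .
\]
So $r^2-\varphi^2\equiv C$ along each flow line, and therefore
\[
|\nu\times E_1|^2=1-\frac{\varphi^2}{r^2}=\frac{C}{r^2},
\]
which is \emph{exactly} computed, decreases monotonically as $r$ increases, and needs only the one-time smallness assumption $3C<r^2$ at $t=0$ to make $\bar R(\nu,\mu,\nu,\mu)\ge\frac{m}{2r^3}\bigl(1-\tfrac{3C}{r^2}\bigr)>0$ for all later $t$. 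Plugging this into the Riccati equation then gives preservation of strict convexity without any iteration. If you want to salvage your route, the single ingredient you are missing is precisely this first integral $r^2-\varphi^2$; it replaces your entire bootstrap with a one-line computation.
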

\begin{proof}
Suppose $X$ is the conformal Killing vector, and $\varphi=X\cdot \nu$ is the support function.
Hence, we have
\begin{eqnarray}\label{r_t}
r_t=\frac{d r}{d t}=\frac{f\varphi}{r}.
\end{eqnarray}
Since  $\Phi(t,\cdot)$ is a geodesic flow, we have
\begin{eqnarray}\label{10.6}
D_{\frac{\p}{\p t}}\nu=0, \text{ and } \varphi_t=D_{\frac{\p}{\p t}}X\cdot \nu=f.
\end{eqnarray}
Combing \eqref{10.6} and \eqref{r_t}, we have $$(r^2-\varphi^2)_t=0.$$ Solving the above equation, we have
$$\varphi=\sqrt{r^2-C},$$ where $C$ is a positive constant only depending on $\Sigma_0$.  On the other hand,
$$\varphi^2=2\rho-\frac{|\nabla \rho|^2}{f^2}=2\rho-\frac{|\nabla \rho|^2}{1-m/r}$$ where $\rho=r^2/2$, which implies
 $C=\frac{|\nabla \rho|^2}{1-m/r}$. Note that, if $\Sigma_0$ is exactly the canonical sphere, we have $C=0.$ By continuity, we can
require $3C<r^2$ at $t=0$, since $\Sigma_0$ is sufficiently close to the canonical sphere.
Obviously, we have
$$|\nu\times E_1|^2=1-(\nu\cdot E_1)^2=1-(\frac{\varphi}{r})^2=\frac{C}{r^2}.$$ Thus  we have for any unit tangent  direction  $\mu$ on $\Sigma_t$,
\begin{eqnarray}
(\nu\times \mu\cdot E_1)^2=(\mu\cdot \nu\times E_1)^2\leq \frac{C}{r^2}.\nonumber
\end{eqnarray}
Taking $Z_1=\nu,Z_2=\mu$ in \eqref{sec}, we have
 \begin{eqnarray}\label{10.7}
 \bar{R}(\nu,\mu,\nu,\mu)&=&\frac{m}{2r^3}[|\nu\times\mu |^2-3(\nu\times \mu\cdot E_1)^2]\\
 &\geq& \frac{m}{2r^3}(1-3\frac{C}{r^2})>0,\nonumber
 \end{eqnarray}
 where we have used $\nu $ and $\mu$ are unit and perpendicular to each other.

By Riccati equation, the principal curvature $\lambda$ of $\Sigma_t$ satisfies
\begin{eqnarray}\label{Ricatti}
\frac{d}{dt}\lambda=-\lambda^2+\bar{R}(\nu,\mu,\nu,\mu),
\end{eqnarray}
where $\mu$ is the corresponding unit principal direction. By \eqref{10.7}, we have the convexity preservation.
\end{proof}

Now, let's consider the following quantity,
\begin{defi}
$$Q_t=\int_{\Sigma_t}(H_0-H)fd\sigma_t+\frac{m}{2}.$$
 \end{defi}
We will see that, if the metric on $\Sigma_0$ is sufficiently close to the canonical sphere metric, we have some monotonicity for $Q_t$.
\begin{lemm}
The quantity $Q_t$ is monotonically  decreasing along the geodesic flow.
\end{lemm}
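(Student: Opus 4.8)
Since $Q_t$ differs from $\int_{\Sigma_t}(H_0-H)f\,d\sigma_t$ only by the additive constant $m/2$, it suffices to show $\frac{d}{dt}\int_{\Sigma_t}(H_0-H)f\,d\sigma_t\le 0$; this is a weighted Shi--Tam monotonicity, the weight $f$ being precisely the static potential of the Schwarzschild metric. First I record the structural facts about the two foliated metrics $ds_0^2=dt^2+g_t$ and $ds^2=u^2dt^2+g_t$. The level sets $\Sigma_t$ coincide; their outward unit normals are $\partial_t$ and $u^{-1}\partial_t$; hence their second fundamental forms $A_0$ (in $ds_0^2$) and $A$ (in $ds^2$) satisfy $A=u^{-1}A_0$, so $H=H_0/u$ and $H_0-H=H_0(1-u^{-1})$; moreover, along the geodesic flow \eqref{10.1}, $\partial_t\,d\sigma_t=H_0\,d\sigma_t$ and $\nabla_{\partial_t}\partial_t=0$.

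Differentiating under the integral sign,
\[
\frac{d}{dt}\int_{\Sigma_t}(H_0-H)f\,d\sigma_t=\int_{\Sigma_t}\Bigl[\partial_t\bigl((H_0-H)f\bigr)+H_0(H_0-H)f\Bigr]\,d\sigma_t .
\]
I expand $\partial_t H_0$ by the trace of the Riccati equation \eqref{Ricatti}, $\partial_t H_0=-|A_0|^2-\bar{Ric}(\partial_t,\partial_t)$, eliminate $|A_0|^2$ with the Gauss equation \eqref{2.6} of $\Sigma_t$, and use that the Schwarzschild metric is scalar flat (so that, by \eqref{2.7} with $n=3$, $\bar{Ric}(\nu,\nu)$ and the ambient sectional curvature $K$ of the tangent plane $T\Sigma_t$ are negatives of one another); the outcome is $\partial_t H_0+H_0^2=R^t-K$, with $R^t$ the scalar curvature of $\Sigma_t$. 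Substituting this and then the equation \eqref{u} for $u$ to replace $H_0\,\partial_t u$, all the terms carrying a factor $R^t$ collapse — exactly as in the flat Shi--Tam computation — into $-\tfrac12 fR^t\frac{(u-1)^2}{u}$, leaving
\[
\frac{d}{dt}\int_{\Sigma_t}(H_0-H)f\,d\sigma_t=\int_{\Sigma_t}\Bigl[-\tfrac12 fR^t\tfrac{(u-1)^2}{u}+f\Delta_t u-f(1-u^{-1})K+H_0(1-u^{-1})\,\partial_t f\Bigr]d\sigma_t .
\]

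To finish I use Green's identity $\int_{\Sigma_t}f\Delta_t u=\int_{\Sigma_t}u\Delta_t f$ together with the static--vacuum identities of the Schwarzschild metric, $\bar\Delta f=0$ and $\overline{\mathrm{Hess}}\,f=f\,\bar{Ric}$ (which follow from the curvature formulas of Section~2; equivalently, $\partial_t^2(f\circ\Phi)=-fK$ can be checked directly from \eqref{r_t} and \eqref{10.6}). Decomposing the ambient Laplacian along the foliation, $0=\bar\Delta f=\Delta_t f+\overline{\mathrm{Hess}}\,f(\nu,\nu)+H_0\,\partial_t f=\Delta_t f-fK+H_0\,\partial_t f$, i.e.\ $\Delta_t f=fK-H_0\,\partial_t f$. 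Inserting this, discarding $\int_{\Sigma_t}\Delta_t f\,d\sigma_t=0$, and applying the Gauss equation \eqref{2.6} once more, the integrand regroups into the manifestly signed identity
\[
\frac{d}{dt}\int_{\Sigma_t}(H_0-H)f\,d\sigma_t=-\int_{\Sigma_t}\frac{(u-1)^2}{u}\,\bigl(f\,\sigma_2(A_0)+H_0\,\partial_t f\bigr)\,d\sigma_t ,
\]
where $\sigma_2(A_0)=\det A_0=\kappa_1\kappa_2$ is the product of the principal curvatures of $\Sigma_t$ in $ds_0^2$. Under the standing hypotheses every factor in the integrand is nonnegative: $u>0$; $\sigma_2(A_0)>0$ and $H_0>0$ because each $\Sigma_t$ is strictly convex by Lemma~\ref{cp}; $f=\sqrt{1-m/r}>0$ since $r>m$ all along the flow; and $\partial_t f=\langle\bar\nabla f,\nu\rangle=f\,f'(r)\,\varphi/r>0$, because $f'(r)>0$ for $r>m$ and the support function $\varphi=X\cdot\nu>0$ (again by Lemma~\ref{cp}, where $\varphi=\sqrt{r^2-C}$ with $3C<r^2$). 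Hence $\frac{dQ_t}{dt}\le 0$.

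\textbf{Main obstacle.} The individual steps are classical, but the delicate point is the bookkeeping with the weight $f$: one must use precisely that $f$ is the Schwarzschild static potential (the identities $\bar\Delta f=0$ and $\overline{\mathrm{Hess}}\,f=f\,\bar{Ric}$) and combine the term $f\Delta_t u$ with the ambient-curvature term and $\partial_t f$ so that they coalesce into $-\bigl(f\,\sigma_2(A_0)+H_0\,\partial_t f\bigr)(u-1)^2/u$, keeping every Gauss-equation substitution and every sign straight — and to remember that the geometric hypotheses ($r>m$, so $f>0$ and $f'>0$, and Lemma~\ref{cp}, for both the strict convexity of $\Sigma_t$ and $\varphi>0$) are exactly what make the resulting integrand nonpositive.
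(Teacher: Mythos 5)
Your proof is correct and follows essentially the same route as the paper: differentiate the weighted Brown–York-type integrand, substitute the traced Riccati equation and the parabolic equation for $u$, apply Green's identity and the static-potential identities for $f$ (which the paper verifies by computing $f_t$, $f_{tt}$ and $\Delta_t f$ directly from the warped-product formulas rather than quoting $\bar\Delta f=0$, $\overline{\mathrm{Hess}}\,f=f\,\bar{Ric}$ abstractly), and conclude nonpositivity from Lemma~\ref{cp} together with $\varphi>0$ and $f,f'>0$. Your final bracket $f\,\sigma_2(A_0)+H_0\,\partial_t f$ coincides, via the Gauss equation, with the paper's $\bigl(K-\bar{Ric}(\nu,\nu)\bigr)f+H_0\varphi\,\tfrac{m}{2r^3}$, so the two monotonicity identities are the same.
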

\begin{proof}
The proof is modified from \cite{ST2}. Taking the trace of Riccati equation, we have
$$\frac{dH_0}{dt}=-|h|^2+\bar{Ric}(\nu,\nu),\text{ and } \frac{d \sigma_t}{dt}=H_0d\sigma_t,$$
where $h$ is the second fundamental form of $\Sigma_t.$

We can calculate the $Q_t$ using the parabolic equation \eqref{u} and \eqref{r_t},
\begin{eqnarray}
&&\frac{d}{dt}\int_{\Sigma_t}H_0(1-u^{-1})fd\sigma_t\nonumber\\
&=&\int_{\Sigma_t}(1-u^{-1})(H_0^2-|h|^2+\bar{Ric}(\nu,\nu))f d\sigma_t+\int_{\Sigma_t}(\Delta_t u+\frac{u^{-1}-u}{2}(2K))fd\sigma_t\nonumber\\
&&+\int_{\Sigma_t}H_0(1-u^{-1})\frac{ff'\varphi}{r}d\sigma_t\nonumber\\
&=&-\int_{\Sigma_t}\frac{u^{-1}}{2}(u-1)^2(2K)fd\sigma_t+\int_{\Sigma_t}[(u^{-1}-1)\bar{Ric}(\nu,\nu)f+u\Delta_t f]d\sigma_t\nonumber\\
&&+\int_{\Sigma_t}H_0(1-u^{-1})\frac{ff'\varphi}{r}d\sigma_t\nonumber,\end{eqnarray}
where $K$ is the Gauss curvature of $\Sigma_t$.

Recall the calculation in section $2$. By Gauss equation, we have
$$K=\bar{R}_{1212}=-\bar{Ric}(\nu,\nu)=\frac{ff'}{r}-\frac{\varphi^2}{r^4}(rff'+1-f^2).$$
By static equation, we have
$$0=\Delta f=f_{tt}+H_0f_t+\Delta_t f.$$ By \eqref{r_t} and \eqref{10.6}, we have
\begin{eqnarray}
f_t=\frac{ff'\varphi}{r}, \text{ and } f_{tt}=\frac{f'f^2}{r}+\frac{\varphi}{r^2}(f^2f''+f(f')^2)-\frac{f'f^2\varphi^2}{r^3}\nonumber.
\end{eqnarray}
Using the explicit formula for $f$, we have
$$-\bar{Ric}(\nu,\nu)=\frac{m}{2r^3}-\frac{3m\varphi^2}{2r^5},$$ and
$$\Delta_t f=f(-\frac{m}{2r^3}+\frac{3m\varphi^2}{2r^5})-H_0\varphi \frac{m}{2r^3}.$$
Thus we have
$$\int_{\Sigma_t}f(-\frac{m}{2r^3}+\frac{3m\varphi^2}{2r^5})d\sigma_t=\int_{\Sigma_t}H_0\varphi\frac{m}{2r^3}d\sigma_t.$$
Combing  the above integral equalities, we obtain
\begin{eqnarray}
&&\int_{\Sigma_t}[(1-u)\bar{Ric}(\nu,\nu)f+u\Delta_tf+H_0\varphi(1-u^{-1})\frac{ff'}{r}]d\sigma_t\nonumber\\
&=&\int_{\Sigma_t}f(-\frac{m}{2r^3}+\frac{3m\varphi^2}{2r^5})d\sigma_t+\int_{\Sigma_t}H_0\varphi\frac{m}{2r^3}(1-u-u^{-1})d\sigma_t\nonumber\\
&=&\int_{\Sigma_t}H_0\varphi\frac{m}{2r^3}d\sigma_t+\int_{\Sigma_t}H_0\varphi\frac{m}{2r^3}(1-u-u^{-1})d\sigma_t\nonumber\\
&=&-\int_{\Sigma_t}u^{-1}(1-u)^2H_0\varphi\frac{m}{2r^3}d\sigma_t\nonumber.
\end{eqnarray}
Thus we obtain
\begin{eqnarray}
&&\frac{d}{dt}\int_{\Sigma_t}H_0(1-u^{-1})fd\sigma_t\nonumber\\
&=&-\int_{\Sigma_t}\frac{u^{-1}}{2}(u-1)^2(2K-2\bar{Ric}(\nu,\nu))fd\sigma_t+\int_{\Sigma_t}[(1-u)\bar{Ric}(\nu,\nu)f+u\Delta_t f]d\sigma_t\nonumber\\
&&+\int_{\Sigma_t}H_0(1-u^{-1})\frac{ff'\varphi}{r}d\sigma_t\nonumber\\
&=&-\int_{\Sigma_t}\frac{u^{-1}}{2}(u-1)^2(2K-2\bar{Ric}(\nu,\nu))fd\sigma_t-\int_{\Sigma_t}u^{-1}(1-u)^2H_0\varphi\frac{m}{2r^3}d\sigma_t\nonumber.
\end{eqnarray}
Lemma \ref{cp} says that the geodesic flow can preserve the convexity, therefore the above formula is non positive. Hence we have the monotonicity.
\end{proof}

In what follows, we will discuss the asymptotic behavior of the principal curvature. By  \eqref{r_t} and $\varphi=\sqrt{r^2-C},$ we have
$$r_t=\sqrt{1-\frac{m}{r}}\sqrt{1-\frac{C}{r^2}}=\sqrt{1-\frac{m}{r}-\frac{C}{r^2}+\frac{mC}{r^3}}.$$
The right hand side is  bounded from below. It is easy to check that $r(t,\cdot)$ is increasing to infinity as $t$ approaches to infinity. Hence, for sufficiently  large $T,$ if $t>T$, we have
$$ r_t=1-\frac{m}{2r}+O(\frac{1}{r^2}),\text{ and } t-T=r-r(T)+\frac{m}{2}\log \frac{2r-m}{2r(T)-m}+O(\frac{1}{r}).$$
Using \eqref{10.7}, the increasing property of $r(t,\cdot)$ and \eqref{10.7}, we have,
 for $t\geq T$,
$$\frac{m}{4t^3}\leq R(\nu,\mu,\nu,\mu)\leq \frac{m}{t^3}.$$
Hence, we have
\begin{eqnarray}\label{10.9}
-\lambda^2+\frac{m}{4t^3}\leq \lambda_t\leq -\lambda^2+\frac{m}{t^3}.
\end{eqnarray}
Let $\phi=t\lambda-1$, then the right hand side of the above inequality can be rewritten as
$$(t\phi)_t\leq -\phi^2+\frac{m}{t}\leq \frac{m}{t}.$$ Integrating the above differential inequality, we have, for some $T,$ if $t>T,$
$$\lambda\leq \frac{1}{t}+\frac{m\log\frac{t}{T}}{t^2}+\frac{T\phi(T)}{t^2}\leq \frac{1}{t}+\frac{2m\log t}{t^2}.$$
By the left hand side of \eqref{10.9} and the above inequality, we have
$$\left(\frac{1}{\lambda}\right)_t=-\frac{\lambda_t}{\lambda^2}\leq 1-\frac{m}{4t^3\lambda^2}\leq 1-\frac{m}{5t}.$$
Similarly, we have, for some sufficiently large $T,$ if $t\geq T,$  $$\lambda\geq \frac{1}{t}+\frac{m\log t}{6t^2}.$$ So we obtain, for $t\geq T$,
$$ \lambda = \frac{1}{t}+O(\frac{\log t}{t^2}).$$
By \eqref{sec}, the sectional curvature of the ambient space along the tangential vectors is $O(\frac{1}{r^3})$. Hence by Gauss equations  we have the scalar curvature of the surface $\Sigma_t$,
$$R^t=\frac{2}{t^2}+O(\frac{\log t}{t^3}).$$

Next, we discuss the metric tensor on $\Sigma_t$. Let $\nu^t$ be the unit normal of  $\Sigma_t$. It is obvious that
$$D\nu^t=d\nu^t+O(\frac{1}{t})=d\frac{\nu^t}{|\nu^t|_E}+O(\frac{1}{t}).$$ Hence we have $$D\nu^t\cdot D\nu^t=d\frac{\nu^t}{|\nu^t|_E}\cdot_E d\frac{\nu^t}{|\nu^t|_E}+O(\frac{1}{t})=g_{\mathbb{S}^2}+O(\frac{1}{t}).$$ Thus we have
$$g|_{\Sigma_t}=t^2D\nu^t\cdot D\nu^t+O(t\log t)=t^2g_{\mathbb{S}^2}+O(t\log t).$$ Then the area element is
$$d\sigma_t=t^2d\sigma+O(t\log t),$$ where $d\sigma$ is the area element of $\mathbb{S}^2$.
Using some similar argument in \cite{ST1}, we have the following lemma,
\begin{lemm}\label{asym}
There exists unique solution to the initial value problem \eqref{u} in $[0,+\infty)$  and the asymptotic behavior of the solution  $u$ is
$$u(t)=1+\frac{m_0}{t}+O(\frac{\log t}{t^2}).$$
\end{lemm}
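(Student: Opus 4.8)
The plan is to adapt the argument of Shi--Tam \cite{ST1}, \cite{ST2} to the Schwarzschild background, proceeding in three stages: short-time existence together with the positivity of $u$, uniform a priori bounds that yield global existence, and the asymptotic expansion obtained after a logarithmic change of time.

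\emph{Local existence and positivity.} As long as $u>0$, equation \eqref{u} is a uniformly parabolic quasilinear equation for the single unknown $u$ on the topological sphere $\Sigma_t$; its coefficients $H_0$, $R^t$ and the metric $g_t$ entering $\Delta_t$ depend smoothly on $t$ by Lemma~\ref{cp} and the computations preceding this lemma. Standard parabolic theory provides a unique smooth solution on a maximal interval $[0,T^{*})$. The initial value $u(0)=H_0/H_1$ is positive, and the zeroth order term $\frac{u-u^3}{2}R^t$ vanishes at $u=0$, so at a first space-time point where $u$ reached $0$ one would have $u_t=0$ and $\Delta_t u\ge 0$ there; the strong maximum principle then forces $u\equiv 0$, a contradiction. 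Hence $u>0$ on $[0,T^{*})$ and the equation stays parabolic.

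\emph{Global existence.} By Lemma~\ref{cp} every $\Sigma_t$ is strictly convex, so the Gauss equation and \eqref{sec} (the ambient tangential sectional curvatures are $O(r^{-3})$, negligible against the intrinsic curvature, which is positive and of order $t^{-2}$) give $R^t>0$ on each $\Sigma_t$. Therefore $\frac{a-a^3}{2}R^t<0$ for constants $a>1$ and $>0$ for constants $a\in(0,1)$, so sufficiently large constants are supersolutions and sufficiently small positive constants are subsolutions of \eqref{u} (indeed $u\equiv 1$ is an exact stationary solution). The parabolic comparison principle then pins $u(t,\cdot)$ between two positive constants depending only on $\Sigma_0$ and $m$. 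With $u$ bounded above and away from $0$, \eqref{u} is uniformly parabolic on every finite time slab, and interior Schauder estimates give uniform $C^{2,\alpha}$ bounds there; hence $T^{*}=+\infty$.

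\emph{Asymptotics.} Inserting the expansions found above, $H_0=\tfrac2t+O(\tfrac{\log t}{t^2})$, $R^t=\tfrac2{t^2}+O(\tfrac{\log t}{t^3})$ and $g_t=t^2 g_{\mathbb{S}^2}+O(t\log t)$ (so $\Delta_t=t^{-2}\Delta_{\mathbb{S}^2}+O(t^{-3}\log t)$), passing to the time variable $\tau=\log t$ and writing $u=1+v$, equation \eqref{u} takes the form $v_\tau=\tfrac12\Delta_{\mathbb{S}^2}v-v+Q(v)+R_\tau[v]$, where $Q(v)=-v\big(\tfrac32 v+\tfrac12 v^2\big)+\cdots$ is at least quadratically small in $v$ together with its derivatives up to order two, and $R_\tau$ is a linear differential operator of order at most two with coefficient norm $O(\tau e^{-\tau})=O(\tfrac{\log t}{t})$ coming from the non-exactness of the Schwarzschild asymptotics. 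Since $u>0$ forces $v>-1$, the reaction $-v-\tfrac32 v^2-\tfrac12 v^3=-v\big(1+\tfrac32 v+\tfrac12 v^2\big)$ is always a strict damping; evaluating the equation at a spatial extremum of $v$, where the Laplacian contributes with the favourable sign, gives $\tfrac{d}{d\tau}\max_{\mathbb{S}^2}|v|\le -(1-o(1))\max_{\mathbb{S}^2}|v|$ and hence $\|v(\tau)\|_{C^{2,\alpha}(\mathbb{S}^2)}\to 0$; a bootstrap in the Schauder estimates then upgrades this to $\|v(\tau)\|_{C^{2,\alpha}}=O(e^{-\tau})=O(t^{-1})$. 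Splitting $v$ into its $\Sigma_t$-mean $\bar v$ and the mean-zero remainder $w$, the operator $\tfrac12\Delta_{\mathbb{S}^2}-\mathrm{Id}$ damps $w$ at rate at least $2$ (the first nonzero eigenvalue of $\tfrac12\Delta_{\mathbb{S}^2}$ being $1$), so $w=O(e^{-2\tau})$, while $\bar v$ obeys $\dot{\bar v}=-\bar v+O(\tau e^{-2\tau})$, which integrates to $\bar v=m_0 e^{-\tau}+O(\tau e^{-2\tau})$ for some constant $m_0$. Undoing $\tau=\log t$ yields $u(t)=1+\frac{m_0}{t}+O\big(\frac{\log t}{t^2}\big)$, and uniqueness is immediate from the parabolic comparison principle.

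\emph{Main obstacle.} The crux is the last stage: one must carry the Schauder bounds uniformly to $t\to\infty$ even though the coefficients of \eqref{u} degenerate like $1/t$, and must tame the $O(\log t)$ defect of the Schwarzschild expansions through the iterated improvement ($v=o(1)$, then $v=O(t^{-1})$, then the sharp remainder) so that accumulated errors do not destroy the $O(\log t/t^2)$ term. Establishing these uniform long-time estimates is precisely what forces the solution to exist on all of $[0,\infty)$ with the stated asymptotics.
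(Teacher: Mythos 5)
Your proposal is correct in outline and is essentially the paper's own route: the paper offers no argument for this lemma beyond invoking Shi--Tam \cite{ST1}, and your three stages (positivity and local existence, constant sub/supersolutions built on $R^t>0$ for global existence, then the damped equation for $v=u-1$ in the variable $\tau=\log t$ with the spherical-harmonic splitting to extract $m_0/t$) are precisely that adaptation, with the long-time Schauder bootstrap supplying the uniformity the paper leaves implicit. Two small points to tighten: justify $R^t>0$ not by comparing with the large-$t$ intrinsic curvature (as written this is circular and only valid in the asymptotic regime) but directly from the Gauss equation together with the quantity $C/r^2<1/3$ used in Lemma \ref{cp}, which makes the ambient tangential sectional curvature positive along the whole flow; and in the Duhamel estimate for the mean-free mode the forcing $O(\tau e^{-2\tau})$ sits exactly at the semigroup rate, so a naive integration yields $O\bigl(\log^2 t/t^2\bigr)$ rather than $O\bigl(\log t/t^2\bigr)$ — harmless for the ADM-mass computation, but worth handling if you want the remainder exactly as stated in \eqref{u}'s asymptotics.
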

At last, let's calculate the ADM mass of the metric $ds^2$. Let $\{z^1,\cdots,z^n\}$ be the standard coordinate for $\mathbb{R}^n$.  The ADM mass of some metric $g$ is defined by,
$$ADM(g)=\lim_{r\rightarrow +\infty}\int_{\mathbb{S}^2}(\frac{\p g_{ij}}{\p z^i}-\frac{\p g_{ii}}{\p z^j})rz^jdV.$$
It is well known that
$$ADM(ds_0^2)=m \omega,$$ where $\omega$ is the area of standard sphere. Let $$b_{ij}=(u^2-1)\frac{\p t}{\p z^i}\frac{\p t}{\p z^j}.$$ A direct calculation shows
\begin{eqnarray}\label{10.10}
\\
\frac{\p b_{ij}}{\p z^i}-\frac{\p b_{ii}}{\p z^j}=2u\frac{\p t}{\p z^i}(\frac{\p u}{\p z^i}\frac{\p t}{\p z^j}-\frac{\p u}{\p z^j}\frac{\p t}{\p z^i})+(u^2-1)(\frac{\p^2 t}{\p (z^i)^2}\frac{\p t}{\p z^j}-\frac{\p^2 t}{\p z^i\p z^j}\frac{\p t}{\p z^i}).\nonumber
\end{eqnarray}
Since the flow is geodesic, it is obvious that
$$\frac{\p t}{\p z^i}=\frac{\p}{\p t}\cdot \frac{\p }{\p z^i},$$ and it is  bounded.
Using the asymptotic expression of $u$ in Lemma \ref{asym}, we have $$\frac{\p u}{\p z^i}=-\frac{m_0}{t^2}\frac{\p t}{\p z^i}+O(\frac{1}{t^2}).$$ Hence the first term in the right hand side of \eqref{10.10} is $O(\dfrac{\log t}{t^3})$ which will not appear in the ADM(b). It suffices to calculate the second term for $i\neq j$.

By the Levi-Civita property, we have
$$\frac{\p^2 t}{\p z^i\p z^j}=\frac{\p}{\p z^i}(\frac{\p}{\p t}\cdot\frac{\p}{\p z^j})=D_{\frac{\p}{\p z^i}}\frac{\p}{\p t}\cdot \frac{\p}{\p z^j}+\frac{\p}{\p t}\cdot D_{\frac{\p}{\p z^i}}\frac{\p}{\p z^j}.$$
Let $\lambda_1,\cdots,\lambda_{n-1}$ and $\{e_1,\cdots,e_{n-1}\}$ be the principal curvatures and corresponding unit principal directions, and we let $$\frac{\p}{\p z^i}=\sum_k a^k_ie_k+N_i\nu,$$ where $a_i^k, N_i$ are the component of $\dfrac{\p}{\p z_i}$ in the direction $e_k$ and $\nu$. Thus we have
$$D_{\frac{\p}{\p z^i}}\frac{\p}{\p t}\cdot \frac{\p}{\p z^j}=a_i^ka_j^lD_{e_k}\nu\cdot e_l=a_i^ka_j^l\lambda_k\delta_{kl}=\sum_ka_i^ka_j^k\lambda_k.$$ On the other hand, it is obvious that
$$g_{ij}=\frac{\p}{\p z^i}\cdot\frac{\p}{\p z^j}=\sum_{k}a^k_ia^k_j+N_iN_j=\sum_{k}a^k_ia^k_j+\frac{\p t}{\p z^i}\frac{\p t}{\p z^j}.$$ Using \eqref{8.6} and \eqref{8.7}, we get
\begin{eqnarray}
D_{\frac{\p}{\p z^i}}\frac{\p}{\p t}\cdot \frac{\p}{\p z^j}&=&\frac{1}{t}(\delta_{ij}+\psi z^iz^j-\frac{\p t}{\p z^i}\frac{\p t}{\p z^j})+O(\frac{\log t}{t^2})\\
&=& \frac{1}{t}(\delta_{ij}-\frac{\p t}{\p z^i}\frac{\p t}{\p z^j})+O(\frac{\log t}{t^2})\nonumber.
\end{eqnarray}
Since $\psi=O(1/t^3)$, we have $$D_{\frac{\p}{\p z^i}}\frac{\p}{\p z^j}=O(\frac{1}{t^2}).$$ Hence we have
\begin{eqnarray}
\frac{\p b_{ij}}{\p z^i}-\frac{\p b_{ii}}{\p z^j}&=&\frac{2m_0}{t}[\frac{1}{t}(1-(\frac{\p t}{\p z^i})^2)\frac{\p t}{\p z^j}+(\frac{\p t}{\p z^i})^2\frac{\p t}{\p z^i}]+O(\frac{\log t}{t^2})\nonumber\\
&=&\frac{2m_0}{t^2}\frac{\p t}{\p z^j}+O(\frac{\log t}{t^2})\nonumber.
\end{eqnarray}
Thus we have
$$ADM(b)=2m_0\lim_{t\rightarrow +\infty}\int_{\mathbb{S}^2}\sum_j\frac{z^j}{t}\frac{dt}{dr}\frac{\p r}{\p z^j}dV=2m_0\omega.$$
By the choice of $u$ in \eqref{u}, the scalar curvature of $ds^2$ is same as $ds_{0}^2$, hence it is nonnegative.
An application of  positive mass theorem \cite{ST1} for nonnegative scalar curvature space with Lipschitz metric immediately yields
$$ADM(ds^2)=ADM(ds_0^2)+ADM(b)=(2m_0+m)\omega\geq 0.$$ Now, we can prove the Theorem \ref{ST}.\\

\noindent {\bf Proof of Theorem \ref{ST}: }
By the monotonicity, it suffices to check that $2Q_t$ converges to $ADM(ds^2)$. It is easy to see that
$$\lim_{t\rightarrow +\infty}2Q_t=2\lim_{t\rightarrow +\infty}\int_{\mathbb{S}^2}H_0(1-u^{-1})\sqrt{1-\frac{m}{t}}d\sigma_t+m=2m_0+m\geq 0.$$
Thus we complete our proof.

\begin{rema} We believe that these type of inequality also holds in asymptotic hyperbolic manifold.
\end{rema}

\section{An example}
In the previous section, we have proved some inequality \eqref{LM} for some convex surfaces. Maybe an interesting problem is that under what restriction, we can drop the constant $m$? In this section, we will calculate this quantity using different sphere constructed in section $6$ in the same AdS space. It seem that the constant $m$ may be dropped, if the two surfaces live in a same space.

We use the notation in section $8$. Assume $\{z^1, z^2, z^3\}$ be the standard coordinate in $\mathbb{R}^3$.
We will write down the second order approximation of the convex perturbation surface defined in section $8$. In AdS space case, we have 
$$f(r)=\sqrt{1-\frac{m}{r}+\kappa r^2}.$$ Let $\vec{r}$ be the radius $1$ sphere and we have
$$\vec{y}=(\epsilon+\epsilon^2 h^1(\sin u)+\epsilon^3 h(\sin u))\frac{\p}{\p z^3},$$ then, by \eqref{W}, we have
\begin{eqnarray}
h^1(t)&=&\left.-\frac{1}{2\epsilon^2}\int^t_0W(1,s,\epsilon)ds\right|_{\epsilon=0}\\
&=&-\int^t_0ds\int^1_0(1-\tau)\psi(1)d\tau\nonumber\\
&=&-\frac{\psi(1)}{2}t\nonumber.
\end{eqnarray}
Using some similar argument in section $6$, we can find the function $h$ such that
$$d(\vec{r}+\vec{y})\cdot d(\vec{r}+\vec{y})=d\vec{r}\cdot d\vec{r}.$$
Denote
$$\alpha=-\frac{\psi(1)}{2}=\frac{f^2(1)-1}{2f^2(1)}=\frac{\kappa-m}{2(1-m+\kappa)}.$$ Then we have,
\begin{eqnarray}\label{11.2}
\rho(\epsilon)&=&\frac{1}{2}|\vec{r}+\vec{y}|^2_E=\frac{1}{2}(1+2\epsilon\sin u +\epsilon^2(1+2\alpha\sin^2u))+O(\epsilon^3).
\end{eqnarray}
By \eqref{2.9}, the second fundamental form is
\begin{eqnarray}\label{11.3}
h_{ij}=\frac{\rho_{i,j}-\frac{f_\rho}{f}\rho_i\rho_j-f^2g_{ij}}{f\varphi},
\end{eqnarray}
where $\varphi$ is the support function defined by $$\varphi=\sqrt{2\rho-\frac{|\nabla\rho|^2}{f^2}}.$$
Differentiation of \eqref{11.2} with respect to $u$, $v$ yields
\begin{eqnarray}\label{11.4}
\rho_u&=&\epsilon\cos u+\epsilon^2\alpha\sin 2u+O(\epsilon^3), \ \  \rho_v=O(\epsilon^3),\\
 \rho_{u,u}
 &=&-\epsilon\sin u+2\alpha\epsilon^2\cos 2u+O(\epsilon^3),\ \
 \rho_{u,v}=\rho_{uv}+\frac{\sin u}{\cos u}\rho_v=O(\epsilon^3), \nonumber\\
 \rho_{v,v}&=&\rho_{vv}-\cos u\sin u\rho_u=-\cos^2u(\epsilon\sin u +2\alpha\epsilon^2\sin^2u)+O(\epsilon^3)\nonumber.
\end{eqnarray}
Combining \eqref{11.3} and \eqref{11.4}, we have
\begin{eqnarray}
f\varphi h_{uu}&=&-\epsilon\sin u+2\epsilon^2\alpha\cos 2u-\frac{f'}{f}\epsilon^2\cos^2 u-f^2+O(\epsilon^3)\\
f\varphi h_{vv}&=&-\cos u\sin u(\epsilon\cos u+\epsilon^2\alpha\sin 2u)-f^2\cos^2 u+O(\epsilon^3)\nonumber.
\end{eqnarray}
Recall that the metric on unit sphere is $$ds^2=du^2+\cos^2 u dv^2.$$
The Taylor expansion of the support function and its inverse are respectively
\begin{eqnarray}
\varphi^2&=&1+2\epsilon\sin u+\epsilon^2(1+2\alpha\sin^2 u)-\epsilon^2\frac{\cos^2 u}{f^2}+O(\epsilon^3),\nonumber\\
\frac{1}{\varphi}&=&1-\epsilon\sin u+\epsilon^2(-\frac{1}{2}-\alpha\sin^2 u+\frac{\cos^2 u}{2f^2}+\frac{3\sin^2 u}{2})+O(\epsilon^3)\nonumber.
\end{eqnarray}
Define the mean curvature by  $H=-g^{ij}h_{ij}$, then, for the surface $\vec{r}+\vec{y}$, we have 
\begin{eqnarray}
\frac{H}{2}f&=&f^2-\epsilon(f^2-1)\sin u-\epsilon^2[f^2(\frac{1}{2}+\alpha\sin^2 u-\frac{\cos^2 u}{2f^2}-\frac{3}{2}\sin^2 u)\nonumber\\
&&+\alpha(\cos 2u-\sin^2 u)+\sin^2 u-\frac{f'}{2f}\cos^2 u]+O(\epsilon^3).\nonumber
\end{eqnarray}
We also have the following expansion,
\begin{eqnarray}
\frac{1}{\sqrt{2\rho}}&=& 1-\epsilon\sin u+\frac{\epsilon^2}{2}(3\sin^2u-1-2\alpha\sin^2u)+O(\epsilon^3)\nonumber\\
f^2&=&1-m+\kappa+(m+2\kappa)\epsilon\sin u\nonumber\\
&&+\frac{\epsilon^2}{2}(m+2\kappa+(4\kappa\alpha+2m\alpha-3m)\sin^2u)+O(\epsilon^3)\nonumber\\
\sqrt{1-m+\kappa}f&=&1-m+\kappa+\frac{m+2\kappa}{2}\epsilon\sin u\nonumber\\
&&+\frac{\epsilon^2}{4}(m+2\kappa+(4\kappa\alpha+2m\alpha-3m)\sin^2 u)-\frac{(m+2\kappa)^2}{8(1-m+\kappa)}\epsilon^2\sin^2 u. \nonumber
\end{eqnarray}
Hence we get
\begin{eqnarray}
&&\frac{H}{2}f-\sqrt{1-m+\kappa}f\nonumber\\
&=&\frac{3m}{2}\epsilon\sin u+\epsilon^2[m(\alpha-1)\sin^2 u+\kappa(2\alpha+1)\sin^2 u-f^2(\alpha\sin^2 u-\frac{\cos^2 u}{2f^2})\nonumber\\
&&-\sin^2 u+\frac{f'\cos^2 u}{2f}-(\frac{m}{4}+\frac{\kappa}{2})(1+2\alpha \sin^2 u)+(\frac{3m}{4}+\frac{(m/2+\kappa)^2}{2f^2})\sin^2 u]\nonumber\\
&&+\epsilon^2(\frac{m}{2}+\kappa-\frac{f^2}{2}-\alpha)(1-3\sin^2 u)+O(\epsilon^3)\nonumber.
\end{eqnarray}
In the previous formula $f,f',\alpha$ are all constant, since they take value in $r=1$. It is easy to calculate that
$$\int_{\mathbb{S}^2}d\sigma=4\pi, \int_{\mathbb{S}^2}\sin^2 ud\sigma=\frac{4\pi}{3},\int_{\mathbb{S}^2}\cos^2 ud\sigma=\frac{8\pi}{3},\int_{\mathbb{S}^2}(1-3\sin^2 u)d\sigma=0.$$
Using the above equalities, we have the following expansion
\begin{eqnarray}
&&\frac{6}{4\pi}\int_{\mathbb{S}^2}[\frac{H}{2}f-\sqrt{1-m+\kappa}f]d\sigma\nonumber\\
&=&\epsilon^2[2m(\alpha-1)+2\kappa(2\alpha+1)-f^2(2\alpha-\frac{4}{2f^2})-2+\frac{4f'}{2f}\nonumber\\&&-(\frac{m}{4}+\frac{\kappa}{2})(6+4\alpha)+2(\frac{3m}{4}+\frac{(m/2+\kappa)^2}{2f^2})]+O(\epsilon^3)\nonumber\\
&=&m\alpha-2m+2\alpha\kappa-\kappa-2\alpha f^2(1)+\frac{m+2\kappa}{f^2(1)}+\frac{(m+2\kappa)^2}{4f^2(1)}+O(\epsilon^3)\nonumber\\
&=&\frac{3m}{4}\frac{m+2\kappa}{1-m+\kappa}\epsilon^2+O(\epsilon^3)\nonumber.
\end{eqnarray}
Since $2\sqrt{1-m+\kappa}$ is the mean curvature of slice sphere, the quantity calculated above is nothing but the mass defined by \eqref{LM} without extra $m/2$. Thus we have, if  $\epsilon$ is sufficient small,
$$\int_{\mathbb{S}^2}[H-2\sqrt{1-m+\kappa}]fd\sigma>0.$$

\begin{ack} The authors would like to thank Professor Pengfei Guan for drawing them to study the Weyl problem in warped product space. The paper can not be finished without his continuous encouragement. Part of the content in this paper also comes from his contribution.  They
would like to thank Professor Chao Xia, Professor Yiyan Xu and Doctor Siyuan Lu for their stimulate discussion and valuable comments. The work was done while the two authors are visiting McGill University. They want to thank China Scholarship Council for its financial support. They also wish to thank McGill University for their hospitality.
\end{ack}


\begin{thebibliography}{99}

\bibitem{A}  A. D.  Alexandrov,{\em On a class of closed surfaces}, Mat. Sb., 1938, 4: 69-77.
\bibitem{B1} W. Blaschke, {\em Vorlesungen Uber Differentialgeometrie}, Vol. 1, Julis Springer, Berlin, 1924.
\bibitem{B2} S. Brendle, {\em Constant mean curvature surfaces in warped product space}, Publ. Math. Inst. Hautes etudes Sci.(117) 2013, 247-269.

\bibitem{BY} D. J. Brown and J. W. York, {\em Quasilocal energy and conserved changes derived from the gravitational action,} Phys. Rev. D(3) 47(1993), no.4, 1407-1419.


\bibitem{CX} J. Chang and L. Xiao, {\em The Weyl problem with nonnegative Gauss curvature in hyperbolic space, }  Canadian Journal of Mathematics http://dx.doi.org/10.4153/CJM-2013-046-7.
\bibitem{CV1}  E. Cohn-Vossen, {\em Zwei Satze uber die Starrheit der Eiflachen, Nach. Gesell-schaft Wiss}, Gottingen, Math. Phys.
KL, 1927: 125.134.
\bibitem{CV2} E. Cohn-Vossen, {\em Unstarre geochlossene Flachen}, Math. Ann., 1930, 102: 10.29.


\bibitem{D} M. Dajczer, {\em Submanifolds and Isometric immersions}, Mathematical Lecture Series 13, 1990.
\bibitem{DR} M. Dajczer, and L. L. Rodriguez {\em Infinitesimal rigidity of Euclidean submanifolds}, Ann. Inst. Four. (1990), p. 939-949.

\bibitem{Ei} L.P. Eisenhart, {\em Reimannian geometry}, Princeton University Press, 1997.
\bibitem{GL1} P. Guan and S. Lu, {\em Curvature estimats for immersed hypersurfaces in Reimannian manifolds}, preprint.
\bibitem{GL2} P. Guan and Y. Y.  Li, {\em On Weyl problem with nonnegavie Gauss curvature}, J. Diff. Geom.,
39(1994), 331-342.
\bibitem{GM} M. Giaquinta and L. Martinazzi, {\em An Introduction
to the Regularity Theory for Elliptic Systems, Harmonic Maps
and Minimal Graphs}, Edizioni Della Normale, 2012 Scuola Normale Superiore Pisa, Seconda edizione.

\bibitem{GS} P. Guan and X. Shen, {\em A rigidity theorem for hypersurfaces in higher dimensional space forms,}
Contemporary Mathematics, AMS. V.644, 2015. pp. 61-65.

\bibitem{GWZ} P. Guan, Z. Wang, and X. Zhang {\em A proof of the Alexanderov's uniqueness theorem for convex surface in $\mathbb{R}^3$}, Ann. Inst. H. Poincare Anal. Non Lineaire,  (2016) 329-336.

\bibitem{H} R.S. Hamilton, {\em The Ricci flow on surface}, Mathematics and general relativity (Santa Cruz, CA, 1986), 237-262, Contemp. Math. 71, Amer. Math. Soc., Providence, RI, 1988.

\bibitem{HH} Q. Han and J.-X. Hong,  {\em Isometric embedding of Riemanian manifolds in Euclidean spaces}, mathematical survey and monographs, Vol. 130. American mathematical society.

\bibitem{HNY} Q. Han, N. Nadirashvili and Y. Yuan, {\em Linearity of homogeneous order-one solutions to elliptic equations in dimension three}.  Comm. Pure and App. Math.  Vol LVI(2003), 0425-0433.


\bibitem{HZ} J. Hong and C. Zuily, {\em Isometric embedding of the 2-sphere with nonnegative curvature in $\mathbb{R}^3$},
Math. Z., 219(1995), 323-334.

\bibitem{HIII} L. H\"ormander {\em The analysis of linear partial differential operators III}, Grundlehren der mathematischen Wissenschaften 274, Springer-Verlag, 1994.

\bibitem{I}
J. A. Iaia, {\em Isometric embeddings of surfaces with nonnegative curvature in $\mathbb{R}^3$}, Duke Math. J.,
Vol 67, (1992), 423-459.

\bibitem{K} S. Kobayashi, {\em Transformation groups in differential geometry,} Springer, 1995.
\bibitem{L} H. Lewy, {\em On the existence of a closed convex sucface realizing a given Riemannian metric},
Proceedings of the National Academy of Sciences, U.S.A., Volume 24, No. 2, (1938), 104-106.
\bibitem{LW} C.-Y. Lin and Y.-K. Wang, {\em On isometric embeddings into anti-de sitter spacetimes}, Int. Math. Res. Notices, No. 16 (2015), 7130-7161.

\bibitem{LY1} C.-C. M. Liu and S.T. Yau, {\em Positivity of quasilocal mass}, Phys. Rev. Lett. 90(2003) No. 23,231102, 4p.
\bibitem{LY2} C.-C. M. Liu and S.T. Yau, {\em Positivity of quasilocal mass II}, J. Amer. Math. Soc. 19(2006) No. 1, 181-204.

\bibitem{Lu} S. Lu, {\em Curvature estimates for Weyl problem in Reimannian manifolds}, preprint.

\bibitem{M} S.B. Myers, {\em Riemannian manifolds with positive mean curvature}, Duke Math. J. 8(1941) No.2, 401-404

\bibitem{N1} L. Nirenberg, {\em The Weyl and Minkowski problems in differential geometry in the large},  Comm. Pure  Appl. Math.
(1953), p.337-394.
\bibitem{N2} L. Nirenberg, {\em Nonlinear Problems}, Madison: University of Wisconsin Press, 1963, 177-193.

\bibitem{Ob} M. Obata. {\em Certain conditions for a Reimannian metric to constant scalar curvature}, J. Math. Soc. Japan. 14(1962), 333-340.
\bibitem{P1} A. V. Pogorelov, {\em Regularity of a convex surface with given Gaussian curvature}, (Russian)
Mat. Sbornik N.S. 31(73), (1952). 88-103.
\bibitem{P2} A.V. Pogorelov,  {\em Some results on surface theory in the large},  Adv.  Math. 1 1964,
fasc. 2, 191-264.

\bibitem{PM}  A.J. C. Pacheco and P. Miao, {\em Isometric embeddings of 2-spheres into Schwarzschild manifolds}, Manuscripta Math.(2015), doi:10.1007/s00229-015-0782-2.
\bibitem{R} A. Ros, {\em Compact hypersurfaces with constant scalar curvature and a congruence theorem. With an appendix by Nicholas J. Korevaar.} J. Diff. Geom. 27 (1988), no. 2, 215-223.

\bibitem{ST1} Y.  Shi and L-F. Tam, {\em Positive mass theorem and the boundary behaviors of compact manifolds with nonnegative scalar curvature.} J. Diff. Geom. 62 (2002), no. 1, 79-125.

\bibitem{ST2} Y.  Shi and L-F. Tam, {\em Rigidity of compact manifolds and positivity of quasi-local mass.} Class. Quantum Grav. 24 (2007) 2357-2366.

\bibitem{V} I. N. Vekua, {\em Generalized analytic functions}, English translation, Pergamon Press, 1962.
\bibitem{W} H. Weyl, {\em Uber die Bestimmung einer geschlossenen konvexen Flache durch ihr Linienelement},
Vierteljahrsschrift der naturforschenden Gesellschaft, Zurich, 61, (1916), 40-72.

\bibitem{WY1} M.-T. Wang and S.-T. Yau {\em A generalization of Liu-Yau's quamsilocal mass}, Comm. Anal.  Geom.,V.15,(2007), 249-282.


\bibitem{WY2} M.-T. Wang and S.-T. Yau {\em Isometric embedding into Minkowski space and new quasi-local mass}, Comm. Math. Phys. 288(3), 919-942(2009).

\bibitem{WY3} M.-T. Wang and S.-T. Yau {\em Quasilocal mass in general relativity}, Phys. Rev. Lett. (2009). doe:10.1103/PhysRevLett. 102.021101.

\end{thebibliography}
\end{document}